\documentclass[11pt]{article}

\usepackage{fullpage}
\usepackage[utf8]{inputenc}
\usepackage[T1]{fontenc}
\usepackage{amsmath,amsfonts,amssymb,amsthm,bbm}
\usepackage{stackrel}
\usepackage{enumerate}
\usepackage{graphicx}
\usepackage[hidelinks]{hyperref}
\usepackage{subfigure}
\usepackage{color}
\usepackage{stmaryrd}

\newtheorem{theorem}{Theorem}
\newtheorem{corollary}[theorem]{Corollary}
\newtheorem{proposition}[theorem]{Proposition}

\newtheorem{lemma}[theorem]{Lemma}
\newtheorem{definition}[theorem]{Definition}

\newtheorem{remark}[theorem]{Remark}

\numberwithin{theorem}{section}
\numberwithin{equation}{section}
\numberwithin{figure}{section}

\newcommand{\Lc}{\mathcal{L}}

\newcommand{\Pb}{\mathbb{P}}

\newcommand{\Rb}{\mathbb{R}}

\newcommand{\Db}{\mathbb{D}}

\title{Generalized intersection exponents and local cut points for three-dimensional Brownian loop soup}

\author{
Yifan Gao$^{1}$\thanks{$^{1}$Institute for Theoretical Sciences, Westlake University, \href{mailto:gaoyifan75@westlake.edu.cn}{gaoyifan75@westlake.edu.cn}}
\and
Ruixuan Li$^{2}$\thanks{$^{2}$School of Mathematical Sciences, Peking University, \href{mailto:2300010605@stu.pku.edu.cn}{2300010605@stu.pku.edu.cn}}
\and
Xinyi Li$^{3}$\thanks{$^{3}$Beijing International Center for Mathematical Research, Peking University, \href{mailto:xinyili@bicmr.pku.edu.cn}{xinyili@bicmr.pku.edu.cn}}
}

\date{\today}

\begin{document}
\maketitle

\begin{abstract}
We study generalized non-intersection probabilities for the
three-dimensional Brownian loop soup at subcritical intensities.  We establish
the existence of generalized intersection exponents (GIE) and prove an
up-to-constants estimate for these probabilities by means of a separation
lemma tailored to this setting. We also relate the Hausdorff dimension of the
set of local cut points of the three-dimensional Brownian loop soup to the
GIE, and show that the GIE is continuous at intensity zero, where it reduces
to the classical Brownian intersection exponent. In particular, this
implies that, for sufficiently small intensity parameters, the set of local
cut points has Hausdorff dimension strictly larger than $1$.
\end{abstract}

\section{Introduction}
The Brownian loop soup (BLS), introduced by Lawler and Werner \cite{LW04}, is a
Poissonian ensemble of Brownian loops. In two dimensions the BLS is conformally
invariant and is closely related to Schramm--Loewner evolutions (SLE)
\cite{lawler2003conformal} and conformal loop ensembles \cite{SW12}. These
connections make the planar BLS a central object in the study of conformally
invariant random geometry. In dimension three, however, the SLE/LQG machinery
is no longer available. As a result, many geometric questions about
the three-dimensional BLS remain comparatively less understood. Recent works
have begun to clarify the percolative and geometric behavior of
the three-dimensional BLS and related loop percolation models; see, for
instance, \cite{werner2020clusters,CD2,JL26,CS16,chang2024percolation,CD1}.

A classical approach to fine geometric properties of Brownian paths is
through non-intersection probabilities. The associated intersection exponents
measure the decay rate of the probability that independent Brownian paths avoid
one another across large annuli. In dimension two, Lawler, Schramm and Werner
computed these exponents using SLE \cite{LSW2001a,LSW2001b,LSW2002a,LSW2002b},
which led to exact Hausdorff dimensions for cut points, pioneer points, and the
frontier. In dimension three, the exact values of the corresponding exponents
remain almost completely unknown.
Nevertheless, Lawler expressed the Hausdorff dimension of Brownian cut points
in three dimensions in terms of the exponent \(\xi(1,1)\), which governs the
non-intersection of two independent Brownian motions \cite{Lawler96}; see also
\cite{BL90,Lawler98,BMinvar,gao2024boundary} for estimates and structural properties of these
exponents.

The BLS setting leads to a natural analogue of this
non-intersection picture. If one zooms in near a point that is locally pivotal for a BLS
cluster, the underlying loop supplies two Brownian-type
paths entering and leaving a small neighborhood of the point. For such a local separation to persist, these two paths should not become
connected after one attaches the BLS clusters that they hit. Thus the
relevant event is no longer that one Brownian path avoids another, but
rather that a Brownian path avoids another path enlarged by an independent
BLS environment.
The main point of the present paper is to develop an analogue of the
intersection theory for this local picture.

The generalized intersection exponents introduced below quantify the decay
of these generalized non-intersection probabilities, following the terminology
of \cite{qian2021generalized,GLQ26}. They reduce to the classical Brownian
intersection exponents when the BLS intensity is zero, while for positive
intensity they encode the additional connectivity created by the BLS. Our
first main result is an up-to-constants estimate for these probabilities in
\(\mathbb R^3\). Its proof relies on a BLS version of Lawler's separation
lemma \cite{Lawler98}: unlike in the classical Brownian case, one must separate
a Brownian path from another path together with the random BLS clusters
that it intersects.

We then apply these generalized exponents to the geometry of
three-dimensional BLS clusters. The objects of interest are
\textit{local cut points}, namely points whose removal disconnects a BLS
cluster inside some neighborhood. Their dimension is obtained by a first- and
second-moment argument, in the spirit of \cite{Lawler96} for Brownian cut
points and \cite{GLQ26} for the planar BLS. Finally, we prove that
the generalized intersection exponent is continuous at intensity zero, where it
reduces to the classical Brownian intersection exponent. Combined with the
bound \(\xi(1,1)<1\) from \cite{BL90}, this implies that, for sufficiently
small intensity parameters, both the Hausdorff dimension of the set of local
cut points and the generalized percolation dimension are strictly larger than
\(1\).

\subsection{Generalized intersection exponents in three dimensions}
Let \(\Lc^\alpha\) denote the three-dimensional BLS with
intensity \(\alpha>0\); see Section \ref{subsec:BLS} for the definition. With a slight abuse of notation,
we also write \(\Lc^\alpha\) for its trace. Define the percolation probability
\begin{equation}
    \Pi(\alpha)
    :=
    \Pb(\text{there exists an unbounded cluster in }\Lc^\alpha).
\end{equation}
It was shown in \cite[Theorem~1.2]{JL26} that the three-dimensional BLS
has a non-trivial phase transition, namely
\[
    \alpha_c:=\inf\{\alpha>0:\Pi(\alpha)>0\}\in (0,\infty).
\]
Moreover, \cite[Theorem~1.2]{CD2} implies that the BLS does not
percolate at intensity \(\frac12\), i.e.\ \(\Pi(\frac12)=0\), and hence
\(\alpha_c\ge \frac12\). We write
\begin{equation}
    I_0:=\{\alpha>0:\Pi(\alpha)=0\},
\end{equation}
for the set of subcritical intensities. By monotonicity in \(\alpha\),
\(I_0\) is an interval of the form \((0,\alpha_c)\) or \((0,\alpha_c]\); it is
currently unknown whether \(\alpha_c\in I_0\). Throughout the paper we work in
the subcritical regime \(\alpha\in I_0\), that is, \(\Pi(\alpha)=0\).
 
We now describe the setup for Brownian loop soup. In a BLS \(\Lc\), two loops \(\ell,\ell'\in\Lc\)
are connected if there is a chain of loops
\(\ell_1,\ldots,\ell_n\in\Lc\) with \(\ell_1=\ell\), \(\ell_n=\ell'\), and
\(\ell_i\cap\ell_{i+1}\ne\emptyset\) for \(i=1,\ldots,n-1\). A cluster is a
maximal connected collection of loops. For sets \(A,B\subseteq\mathbb R^3\),
we write
\[
    \{A \overset{\Lc^\alpha}{\longleftrightarrow} B\}
\]
for the event that some cluster of \(\Lc^\alpha\) intersects both \(A\)
and \(B\).

Let \(\mathcal B_s(x)\) be the ball of radius \(s\) centered at \(x\), and
write \(\mathcal B_s=\mathcal B_s(0)\). Set
\(S_r=\partial\mathcal B_{e^r}\), and let \(\Lc_r\) be the collection of loops
of \(\Lc\) contained in \(\mathcal B_{e^r}\). Let
\(B,B^1,\ldots,B^k\) be \(k+1\) independent three-dimensional Brownian motions,
started from independent uniformly chosen points on \(S_0\). For a set \(V\),
write \(\Lambda(V)\) for the union of \(V\) with all BLS clusters that
intersect \(V\). Define the hitting times
\[
    T_r^i = \inf \{ t > 0 : B_t^i \in S_r\},
    \qquad
    T_r= \inf \{t > 0: B_t \in S_r\}.
\]
Let \(\overline{B_r^i}=B^i[0,T_r^i]\) and
\(\overline{B_r}=B[0,T_r]\). Let \(\mathcal F_r\) be the \(\sigma\)-field
generated by the \(k\) paths
\(\overline{B_r^1},\ldots,\overline{B_r^k}\) and by
\(\Lc_r\setminus\Lc_0\). Let \(\Lambda_r\) be the union of
\(\overline{B_r^1},\ldots,\overline{B_r^k}\) and all clusters in
\(\Lc_r\setminus\Lc_0\) that intersect these paths. Thus \(\Lambda_r\) is the
BLS enlargement of the \(k\) Brownian paths. Define
\[
    Z_r
    =
    \Pb(\overline{B_r} \cap \Lambda_r = \emptyset \mid \mathcal{F}_r),
\]
which is the conditional probability that an additional independent
Brownian path avoids this enlarged obstacle up to radius \(e^r\). For
\(\alpha \in I_0\), \(\lambda>0\), \(k \in \mathbb{N}\), and \(r\ge0\), define the \textit{generalized non-intersection probability} by
\[
    p(\alpha,k,r,\lambda)=\mathbb{E}[Z_r^\lambda].
\]

A classical submultiplicativity argument, recalled in \eqref{eq:sub},
shows that the following limit exists:
\begin{equation}\label{GIE}
    \xi_\alpha(k,\lambda)
    :=
    -\lim_{r\to\infty}\frac{1}{r}\log p(\alpha,k,r,\lambda)
    \in[0,\infty].
\end{equation}
We call \(\xi_\alpha(k,\lambda)\) the \textit{generalized intersection
exponent}.

The first main result is the following uniform up-to-constants estimate, the
BLS analogue of Lawler's estimate for Brownian intersection exponents
\cite{Lawler96}.
\begin{theorem}\label{thm:up-to-constant}
    Let $\alpha_*\in I_0$, $k \in \mathbb{N}$, and $0<\lambda_0<\lambda_1$. Then there exist
    constants $C_1,C_2>0$, depending only on $\alpha_*$, $k$, $\lambda_0$ and $\lambda_1$, such that
    for all $\alpha\in[0,\alpha_*]$, all $\lambda \in [\lambda_0,\lambda_1]$, and all $r\ge 0$,
    \begin{equation}\label{eq:up-to-constant}
        C_1 e^{-r\xi_\alpha (k,\lambda)} \leq p(\alpha,k,r,\lambda) \leq C_2 e^{-r\xi_\alpha (k,\lambda)}.
    \end{equation}
    As a result, \(\xi_\alpha(k,\lambda)\in(0,\infty)\) for all \(\alpha\in I_0\) and \(\lambda>0\).
\end{theorem}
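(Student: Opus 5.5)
The plan follows Lawler's scheme for Brownian intersection exponents. We prove approximate submultiplicativity and supermultiplicativity of $p_r:=p(\alpha,k,r,\lambda)$, both with constants uniform in $\alpha\in[0,\alpha_*]$ and $\lambda\in[\lambda_0,\lambda_1]$. The upper bound in \eqref{eq:up-to-constant} comes from the reverse multiplicativity $p_{r+s}\ge c\,p_rp_s$. Given submultiplicativity $p_{r+s}\le C p_r p_s$ (the relation recalled in \eqref{eq:sub}), Fekete's lemma gives $p_r\ge C^{-1}e^{-r\xi}$. Symmetrically, $p_{r+s}\ge c\,p_rp_s$ gives $p_r\le c^{-1}e^{-r\xi}$. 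So the whole theorem reduces to a uniform separation lemma, which yields the reverse inequality.

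\textbf{Submultiplicativity.} On $\{\overline{B_{r+s}}\cap\Lambda_{r+s}=\emptyset\}$, both the portion up to $T_r$ and the portion after $T_r$ must avoid the respective enlarged obstacles. We restrict clusters to loops in $\Lc_r\setminus\Lc_0$ and in $\Lc_{r+s}\setminus\Lc_r$ respectively; discarding loops crossing $S_r$ only enlarges the avoidance event. These two pieces are conditionally independent given the positions on $S_r$, by the Poisson property of the soup and the strong Markov property. Harnack's inequality on the starting points makes the scaled second piece comparable to $p_s$, by scale invariance of BLS in $\Rb^3$. Hence $p_{r+s}\le Cp_rp_s$, with $C$ depending only on Harnack constants, so it is uniform.

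\textbf{Separation lemma.} We define a ``nice'' configuration at scale $r$. The paths $B^1,\dots,B^k$ end in a cone around a fixed direction. $B$ ends in the opposite cone. Moreover, $\Lambda_r\cap\mathcal B_{e^r}\setminus\mathcal B_{e^{r-1}}$ stays inside a cone of half-angle $\pi/4$, while $\overline{B_r}$ stays at distance $\ge c e^r$ from it near $S_r$. The lemma claims
\[
\mathbb E[Z_r^\lambda;\text{nice}]\ge c\,\mathbb E[Z_r^\lambda]
\]
uniformly. We prove it as in \cite{Lawler98}. If the configuration at scale $r-j$ is only $\delta$-separated (separation $\delta e^{r-j}$), then with conditional probability bounded below the paths improve their separation by a factor $2$ on the next scale. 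Summing over dyadic $\delta$ with a bad-event cost of order $e^{-c/\delta}$ against a gain polynomial in $\delta$ closes the argument.

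The BLS-specific input is that the enlarged obstacle does not reach far from the paths. Since $\alpha_*\in I_0$, for $\alpha\le\alpha_*$ the probability that a cluster of $\Lc^\alpha$ meeting a set of diameter $\epsilon e^r$ has diameter $\ge\eta e^r$ tends to $0$ as $\epsilon/\eta\to0$, uniformly in $\alpha$ by monotonicity. This follows from $\Pi(\alpha_*)=0$ and scale invariance, via a compactness/one-arm argument. With it, a Brownian extension that stays in a narrow tube keeps its hull in a slightly larger tube with probability bounded below. We also need to control loops crossing several scales; loops of diameter comparable to $e^r$ that hit a fixed compact set are finitely many in expectation.

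\textbf{Gluing, finiteness, and the main obstacle.} With nice configurations, we attach to a nice configuration at scale $r$ an independent scaled copy on $[r,r+s]$ started nicely. We connect them through the separating cones at $S_r$, at a cost bounded below by a constant, again using the no-long-connection estimate so that clusters straddling $S_r$ do not link the two sides. This gives $p_{r+s}\ge c\,p_rp_s$. Finiteness $\xi<\infty$ follows from $p_1>0$ together with this gluing. Positivity $\xi>0$ follows from $Z_r\le\Pb(\overline{B_r}\cap\overline{B^1_r}=\emptyset\mid\cdot)$, so $\xi_\alpha(k,\lambda)\ge\xi_0(1,\lambda)>0$. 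The main obstacle is the separation lemma, specifically obtaining the uniform estimate that BLS clusters attached to the paths do not create macroscopic connections across the separating gap. I would first try a quantitative one-arm bound for subcritical intensities derived from $\Pi(\alpha_*)=0$, and otherwise work only with qualitative $o(1)$ control inside the Lawler iteration.
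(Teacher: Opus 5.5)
Your architecture matches the paper's: approximate sub- and supermultiplicativity with Fekete, a Lawler-type separation lemma, and gluing across a middle annulus on a small-cluster event. Your positivity and finiteness arguments are fine.

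The genuine gap is the separation lemma, which carries the whole upper bound. You only assert it ``as in \cite{Lawler98}'', and the BLS input you propose for it (smallness of clusters, possibly a quantitative one-arm bound) does not reach the real obstruction. Conditioning on $\mathcal F_r$ freezes the hull $\Lambda_r$. Its clusters can approach $B(T_r)$ arbitrarily closely even when $B(T_r)$ is at distance of order $e^r$ from the Brownian path $\overline{B^1_r}$. Estimates on \emph{fresh} clusters say nothing about this frozen set, so your step ``with conditional probability bounded below the separation improves by a factor $2$'' is unsupported.

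You also cannot simply build the hull into the separation quantity. The other half of Lawler's iteration (poor quality forces an intersection in the next micro-step with probability bounded below, cf.\ \eqref{eq:intersect with positive probability}) uses that the nearby obstacle is a Brownian path. A 3D Brownian motion started nearby hits such a path with probability bounded below. By contrast, the part of the hull nearest to $B(T_r)$ may consist of small loops of low capacity.

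The paper measures quality by distances between the Brownian paths only. In Lemma~\ref{decay rate} it separates $B$ from the frozen obstacle $P$ by the conditioned exit estimate, Lemma~\ref{lem:conditioned-exit-cap}. That lemma says that, uniformly over compact $K\subseteq\Db\setminus\overline{\Db(u)}$, Brownian motion conditioned to avoid $K$ exits $\Db$ through the outward cap with probability at least $c$. A harmonic-measure statement of this kind is the missing idea.

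Several further steps also need repair.
\begin{itemize}
\item \textbf{Confinement cost.} The cost of confining the fresh hull must be only polynomial in the separation $\epsilon$, so that the superpolynomially small probability of staying in bad quality beats it. Confinement to a tube of width $\epsilon e^r$ does not have probability bounded below; it costs of order $e^{-c/\epsilon}$. The paper gets $c_1\epsilon^{c_2}$ (Lemma~\ref{lem:loop-intersects-cone-small}) by letting the region open as a cone from the $\epsilon$-ball at the tip. It then applies Lemma~\ref{lem:cluster_small 1} on each of the $O(\log(1/\epsilon))$ dyadic scales and combines them with FKG. This uses only the qualitative consequence of $\Pi(\alpha_*)=0$, so no quantitative one-arm bound is needed.
\item \textbf{Bookkeeping.} A bad-event cost $e^{-c/\delta}$ at every dyadic level requires about $1/\delta$ micro-steps per level, and these do not fit into a bounded range of scales. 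The paper uses $n^2$ steps of length $2^{-n}$, giving $e^{-\beta n^2}$ with $\sum_n n^2 2^{-n}<\infty$.
\item \textbf{Gluing.} An outer piece ``started nicely'' is comparable to $p_s$ only through a separation lemma at its \emph{inner} end, which you do not supply. The paper obtains it by inversion (Lemma~\ref{lem:inversion invariance}).
\item \textbf{Submultiplicativity.} Harnack's inequality cannot compare whole path laws from different starting points on $S_r$. You must discard a buffer annulus, which gives $p_{r+s+1}\le C p_r p_s$ as in \eqref{eq:sub}; this is harmless for Fekete.
\end{itemize}
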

\begin{remark}
For $\alpha=0$, Theorem~\ref{thm:up-to-constant} reduces to the
classical up-to-constants estimate for Brownian intersection exponents,
which was proved in \cite{Lawler96}. Hence it suffices to prove
Theorem~\ref{thm:up-to-constant} for $\alpha\in(0,\alpha_*]$.
\end{remark}

\paragraph{Relation to the metric graph of \(\mathbb Z^3\).}
It is useful to compare this continuum setup with recent results for the
metric graph \(\widetilde{\mathbb Z}^3\) at critical intensity \(\frac12\)
\cite{CD1,CD2}. There the isomorphism between the BLS and the
Gaussian free field gives a particularly effective description of macroscopic
clusters. Arm events provide the natural language for pivotal geometry: a
one-arm event asks for a connection to macroscopic distance, while a cut or
pivotal edge requires two separated macroscopic connections from the two sides
of the edge. Thus the dimension of cut edges is equal to $\frac12$ \cite{CD2}, which is governed by a two-arm exponent \(\frac52\) on $\widetilde{\mathbb Z}^3$ \cite{cai2025heterochromatic}.

The exponent in the present paper has a closer metric-graph analogue. We start
two independent Brownian motions on \(\widetilde{\mathbb Z}^3\) from \(0\) and
\(1\), stop them when they reach distance $N$, and attach to each path
all critical BLS clusters on $\widetilde{\mathbb{Z}^3}$ that it intersects. Werner's switching identity
for the cable-graph BLS \cite{WernerSwitching25} relates the creation of a
macroscopic Brownian arm to the (cluster) one-arm event. Consequently, at the level of exponents,
the separation cost for these two already-given Brownian arms is obtained by
subtracting the cost of the two arms from the two-arm exponent. Since the
metric-graph one-arm exponent in \(d=3\) is $\frac12$ \cite{cai2024one,drewitz2025critical}, the corresponding
generalized non-intersection exponent on \(\widetilde{\mathbb Z}^3\) is
\[
    \frac52 - 2\cdot \frac12 = \frac32.
\]
This comparison is close in spirit to the local picture above,
but it does not provide the continuum estimates needed here. For the BLS on
\(\mathbb R^3\), the corresponding one-arm and two-arm exponents are
not presently known (even at intensity $\frac12$).

\subsection{Hausdorff dimension of local cut points and percolation dimension}
We next apply the generalized exponent to a BLS analogue of
Brownian cut points. As mentioned, Lawler proved in \cite{Lawler96} that the Hausdorff
dimension of cut points of three-dimensional Brownian motion is governed by the
classical intersection exponent. We first give the definition of local cut points on a loop in the BLS.
\begin{definition}[Local cut points]
For a loop \(\gamma\in\Lc^\alpha\), a point \(x\in\gamma\), and
\(\epsilon>0\), let \(\mathcal U_\epsilon^\gamma(x)\) be the connected component of
\(\gamma\cap\mathcal B_\epsilon(x)\) containing $x$. Let
\(\mathcal C_\epsilon^\gamma(x)\) be the union of
\(\mathcal U_\epsilon^\gamma(x)\) together with all clusters of $\Lc_{\mathcal{B}_{\epsilon}(x)}^{\alpha}$ it intersects. We say that \(x\) is a
local cut point of \(\Lc^\alpha\) on \(\gamma\) if there exists
\(\epsilon>0\) such that
\(\mathcal C_\epsilon^\gamma(x)\setminus\{x\}\) is no longer connected.
\end{definition}
For \(\gamma\in\Lc^\alpha\), let \(G_{\rm loc}^\gamma\) denote the set of
local cut points on \(\gamma\), and set
\[
    G_{\rm loc}:=\bigcup_{\gamma\in\Lc^\alpha}G_{\rm loc}^\gamma .
\]
We call \(G_{\rm loc}\) the set of local cut points of the BLS.
\begin{theorem}\label{dimension}
    Let \(\alpha \in I_0\). Almost surely,
    \[
        {\rm dim}_{\mathcal{H}}(G_{\rm loc})
        =
        \max\{2-\xi_{\alpha}(1,1),0\},
    \]
    where \(\xi_{\alpha}(1,1)\) is defined in \eqref{GIE}.
\end{theorem}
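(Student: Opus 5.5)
We follow Lawler's first- and second-moment strategy for Brownian cut points \cite{Lawler96}, adapted to the loop-soup environment as in \cite{GLQ26}, and use Theorem~\ref{thm:up-to-constant} with $k=\lambda=1$ as the main input. Since $G_{\rm loc}$ is a countable union over loops and over rational scales $\epsilon$ and rational centres, it suffices to prove two things. The upper bound is that, for a fixed macroscopic window, the set of points of a loop that are $\epsilon$-local cut points with $\epsilon$ bounded below has dimension at most $2-\xi_\alpha(1,1)$. The lower bound is that, with positive probability (and then almost surely, by a $0$--$1$ argument using scaling and independence of the soup in disjoint regions), this dimension is at least $2-\xi_\alpha(1,1)$ when the latter is positive.

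\textbf{Upper bound.} Cover a unit box by cubes of side $\delta=e^{-n}$. Using the rooted-loop description of the BLS, the expected number of loops of diameter comparable to $1$ hitting a given cube is $O(1)$. Conditioned on the loop visiting the cube at centre $z$, it locally decomposes into two Brownian arms from $\partial\mathcal B_\delta(z)$ out to $\partial\mathcal B_\epsilon(z)$: the past and future of the visit. If $z$ is within $\delta$ of a local cut point, then these two arms are not connected through $\Lc_{\mathcal B_\epsilon(z)}$ in the annulus. Apart from boundary effects at scale $\delta$, this is exactly the event defining $Z_r$ with $r=\log(\epsilon/\delta)$, with the roles of $B$ and $B^1$ made symmetric. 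The intensity measure of such visits carries a factor $\delta^{1}$, coming from the Brownian occupation of a $\delta$-ball per unit time and the $\delta$-neighbourhood of a 3D path. This gives a per-cube probability of $\lesssim \delta\cdot p(\alpha,1,\log(\epsilon/\delta),1)\asymp \delta^{1+\xi_\alpha(1,1)}$ by Theorem~\ref{thm:up-to-constant}. Summing over $\delta^{-3}$ cubes, the expected number of cubes needed is $\delta^{-(2-\xi)}$, which yields the upper bound on the Minkowski, and hence Hausdorff, dimension. Two technical points need care. First, the annulus crossing must be started from a non-uniform point on $\partial\mathcal B_\delta$, so we use Harnack/boundary comparison for Brownian motion. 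Second, loops of diameter smaller than $\delta$ must be handled by including them in the enlargement, which only decreases the probability.

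\textbf{Lower bound.} Fix one macroscopic loop; more conveniently, fix a Brownian path $\gamma$ and condition the soup by Palm-type arguments. For each dyadic cube $Q$ of side $\delta$, define $J_Q$ as the event that $\gamma$ passes through $Q$, the two arms separate in the annulus $\mathcal B_{\epsilon}(z)\setminus\mathcal B_\delta(z)$, and they reach good ``separated'' configurations at both ends. The last condition is supplied by the separation lemma underlying Theorem~\ref{thm:up-to-constant}, which ensures $\Pb(J_Q)\asymp\delta^{1+\xi}$. We then construct the random measure $\mu_n=\sum_Q \delta^{-(1+\xi)}\mathbbm 1_{J_Q}\,\mathrm{Leb}|_Q\,\delta^{-3}\cdot\delta^{3}$, suitably normalised. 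The key estimate is the two-point bound
\[
\Pb(J_Q\cap J_{Q'})\le C\,\delta^{2(1+\xi)}\,|z-z'|^{-(1+\xi)},
\]
obtained by splitting into scales: independent separation events in $\mathcal B_{|z-z'|/3}(z)$ and $\mathcal B_{|z-z'|/3}(z')$ (together with a cost $|z-z'|^{-1}$ for the path to visit both points), and using quasi-multiplicativity $p(r_1+r_2)\asymp p(r_1)p(r_2)$, which follows from the up-to-constants estimate. The energy $\iint|x-y|^{-s}\,d\mu_n\,d\mu_n$ is then bounded in expectation for $s<2-\xi$. Frostman's lemma together with a Paley--Zygmund argument gives a limit measure supported on $\epsilon$-local cut points, provided that points in $\bigcap_n\{J_Q\}$ are genuinely local cut points. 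This last claim needs closedness: the separation must persist as $\delta\to0$. It is handled by nested cubes and the fact that the soup restricted to $\mathcal B_\epsilon$ has a.s. finitely many loops of size above any threshold.

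\textbf{Main obstacle.} We expect the main difficulty to be the two-point estimate. Unlike the Brownian case, the events near $z$ and $z'$ are correlated through large loop-soup clusters spanning both neighbourhoods, and the two arms belong to one loop whose global shape couples them. We would first try to restrict to loops in disjoint balls, since $J_Q$ only uses $\Lc_{\mathcal B_\epsilon(z)}$ and monotonicity lets us drop loops crossing between balls for the upper bound. We would then use subcriticality ($\alpha\in I_0$) to control clusters crossing intermediate annuli in the up-to-constants comparisons, exactly as in the proof of Theorem~\ref{thm:up-to-constant}.
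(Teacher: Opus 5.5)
Most of your plan coincides with the paper's proof. You have the per-cube estimate $\Pb(J_Q)\asymp\delta^{1+\xi_\alpha(1,1)}$ from Theorem~\ref{thm:up-to-constant}, obtained after comparing the two strands of the loop with independent excursions. You have the two-point bound $\delta^{2(1+\xi)}|z-z'|^{-(1+\xi)}$ from three disjoint annuli carrying restricted soups, which are independent and monotonically favourable. You then use a Palm transfer from one loop to the whole soup and a $0$--$1$ upgrade. For that last step, your argument via independent soups in disjoint translated boxes is a legitimate substitute for the ergodicity used in the paper (Lemma~\ref{Ergodicity}).

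The step that fails as you describe it is the passage from $\bigcap_n\bigcup_{Q:J_Q}Q$ to genuine local cut points. Your $J_Q$ constrains only the two distinguished arms and the soup clusters attached to them. It says nothing about the other pieces of $\gamma$ inside $\mathcal B_\epsilon(z)$: further crossings of the annulus, and the middle piece of $\gamma$ between the first entrance into and the last exit from $\mathcal B_\delta(z)$. These pieces belong to $\mathcal U^\gamma_\epsilon(x)$ and can reconnect the two arms. The finiteness of large soup loops, which is all you invoke, does not address them.

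Returns of $\gamma$ that stay at positive distance from $x$ can be removed by shrinking $\epsilon$. However, returns at every small scale force $x$ to be a double point of $\gamma$, and the double points of a three-dimensional Brownian path form a set of dimension $1$. When $\xi_\alpha(1,1)\ge 1$, which is not excluded for $\alpha\in I_0$, this set is not negligible compared to $2-\xi_\alpha(1,1)$. So the Frostman measure you construct could charge points that are not local cut points.

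The paper closes this with the extra estimate \eqref{eq:F}: a cut box at which the loop makes an additional crossing costs a further factor $2^{-n}$. Hence the exceptional set in \eqref{eq:Kjn-Fjn} has dimension at most $\max\{1-\xi_\alpha(1,1),0\}<2-\xi_\alpha(1,1)$ and can be discarded from the set produced by the second-moment argument. You need this ingredient. Alternatively, you could build a no-return condition into each $J_Q$ (no further visits of $\gamma$ to $\mathcal B_{\epsilon/2}(z)$, and the middle piece confined near $\mathcal B_\delta(z)$). You would then have to check that this costs only a constant in the first-moment lower bound.
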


We next prove that the generalized intersection exponent is continuous at
\(\alpha=0\). In particular, this implies that local cut points exist for
\(\Lc^\alpha\) when \(\alpha>0\) is sufficiently small.

\begin{theorem}\label{theorem: continuity of intersection exponent}
    The generalized intersection exponent is continuous at \(\alpha=0\): for
    every \(k\in\mathbb N\) and \(\lambda>0\),
    \[
        \lim_{\alpha\downarrow 0} \xi_{\alpha}(k,\lambda)=\xi(k,\lambda),
    \]
    where \(\xi(k,\lambda)\) is the intersection exponent of
    three-dimensional Brownian motion introduced in \cite{Lawler96}.
    In particular, there exists \(\alpha_1>0\) such that for all
    \(\alpha \in (0,\alpha_1]\),
    \[
        {\rm dim}_{\mathcal{H}}(G_{\rm loc})>1
        \qquad\text{a.s.}
    \]
\end{theorem}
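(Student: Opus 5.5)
We prove continuity at $\alpha=0$ using two inequalities and then deduce the dimension statement.

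\textbf{Lower bound on the exponent.} Since $\Lambda_r$ contains the union of the $k$ Brownian paths, $Z_r$ is pointwise bounded by the classical conditional non-intersection probability. Monotonicity of $p$ in $\alpha$ (by coupling the soups $\Lc^{\alpha}\subseteq\Lc^{\alpha'}$ for $\alpha\le\alpha'$) then gives
\[
\xi(k,\lambda)\le \xi_\alpha(k,\lambda)\le\xi_{\alpha'}(k,\lambda).
\]
Hence $\liminf_{\alpha\downarrow0}\xi_\alpha(k,\lambda)\ge\xi(k,\lambda)$, and it remains to prove $\limsup_{\alpha\downarrow 0}\xi_\alpha\le\xi$.

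\textbf{Upper bound on the exponent.} We use Theorem~\ref{thm:up-to-constant}, with constants uniform in $\alpha\in[0,\alpha_*]$, to reduce the problem to a fixed finite scale. The uniform up-to-constants estimate yields quasi-multiplicativity: for every $r$ and every $n\ge1$,
\[
p(\alpha,k,nr,\lambda)\ge c^{\,n} p(\alpha,k,r,\lambda)^n ,
\]
with $c$ independent of $\alpha$ and $r$. This follows from the separation lemma: conditioned on non-intersection up to scale $r$, the configuration is well separated with comparable probability, and separated configurations can be glued across scales at constant cost. Consequently
\[
\xi_\alpha\le -\tfrac1r\log p(\alpha,k,r,\lambda)+\tfrac{C}{r}.
\]
Given $\varepsilon>0$, choose $r$ so large that $-\frac1r\log p(0,k,r,\lambda)\le\xi+\varepsilon$ and $C/r<\varepsilon$. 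It then suffices to show $p(\alpha,k,r,\lambda)\to p(0,k,r,\lambda)$ as $\alpha\to0$ for this fixed $r$.

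\textbf{Fixed-scale continuity and the main obstacle.} Fixed-scale continuity is the main technical step. On the event that no loop of $\Lc_r^\alpha\setminus\Lc_0^\alpha$ with diameter at least $\delta$ exists, the hit clusters are made of small loops. Their number has mean of order $\alpha\,C(\delta,r)$, which tends to $0$ as $\alpha\to0$, so it suffices to handle loops of diameter less than $\delta$. Subcriticality (indeed, $\alpha$ tiny) implies that clusters of such small loops have diameter at most $\delta'$ with high probability, with $\delta'\to0$ as $\delta\to0$ and $\alpha\to0$. Therefore $\Lambda_r$ lies in the $\delta'$-neighbourhood of the Brownian paths, except on an event of small probability. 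Since $0\le Z_r\le1$, it remains to show that
\[
\Pb\bigl(\overline{B_r}\cap\text{($\delta'$-nbhd of } \textstyle\bigcup_i\overline{B^i_r})=\emptyset\mid\mathcal F_r\bigr)\to Z_r^{(0)}\quad\text{in }L^1 \text{ as } \delta'\to0 .
\]
This holds because the event that two three-dimensional Brownian paths come within distance $\delta'$ of each other without intersecting has probability tending to $0$; this is a standard fact, since an avoiding Brownian path typically stays at positive distance. Bounded convergence then gives $p(\alpha,k,r,\lambda)\to p(0,k,r,\lambda)$. The delicate point is controlling cluster diameters of small loops uniformly. I would first try a Peierls/branching comparison at small $\alpha$, where the expected number of chains of loops reaching distance $\delta'$ decays geometrically.

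\textbf{Dimension statement.} By \cite{BL90} we have $\xi(1,1)<1$. By continuity there is $\alpha_1>0$ such that $\xi_\alpha(1,1)<1$ for all $\alpha\le\alpha_1$, and these $\alpha$ lie in $I_0$ since $\alpha_1\le\frac12\le\alpha_c$. Theorem~\ref{dimension} then gives $\dim_{\mathcal H}(G_{\rm loc})=2-\xi_\alpha(1,1)>1$ almost surely.
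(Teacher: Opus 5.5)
Your overall architecture matches the paper's. You use the monotone comparison \(Z_{r,\alpha}\le Z_{r,0}\). You use the uniform-in-\(\alpha\) constants of Theorem~\ref{thm:up-to-constant} to reduce to continuity of \(p(\alpha,k,r,\lambda)\) at a fixed scale \(r\). You note that \(\{0<\operatorname{dist}(\overline{B_r},\overline{B^1_r})\le\delta\}\) decreases to a null set. Finally you combine \(\xi(1,1)<1\) from \cite{BL90} with Theorem~\ref{dimension}. Using monotonicity for one direction, so that only the upper half of the uniform estimate is needed, is a harmless simplification of the paper's two-sided passage to the limit.

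\textbf{The gap.} The one step that carries the new content is not proved. For each fixed \(\delta>0\) you need
\[
\Pb\bigl(\text{every cluster of }\Lc^\alpha\text{ meeting }\mathcal B_{e^r}\text{ has diameter}<\delta\bigr)\longrightarrow 1 \quad\text{as } \alpha\downarrow0 .
\]
Subcriticality at a fixed \(\alpha\) does not give this. In fact, for fixed \(\alpha>0\) this probability tends to \(0\) as \(\delta\to0\), since there are order \(\alpha(e^r/\delta)^3\) loops of diameter at least \(\delta\) in the ball. So the limit \(\alpha\downarrow0\) must be taken first, at fixed \(\delta\).

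The Peierls/branching route you suggest fails as stated. The loop measure is scale invariant, and a three-dimensional Brownian loop \(\ell\) has a two-dimensional trace. Hence the \(\mu_{\rm loop}\)-mass of loops with diameter in \([\epsilon,2\epsilon]\) that meet \(\ell\) grows like \((\operatorname{diam}(\ell)/\epsilon)^{2}\), and this diverges when summed over dyadic \(\epsilon\). So every loop meets infinitely many loops almost surely, and the ``offspring mean'' is infinite. By the Mecke formula, the expected number of intersecting chains, already of length two, is infinite for every \(\alpha>0\). Discarding loops of diameter \(\ge\delta\) does not help, because below scale \(\delta\) the truncated soup is still scale invariant. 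There is no geometric decay of chain counts to exploit.

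\textbf{What is needed.} A genuinely multi-scale (renormalization) input is required. The paper supplies it through Proposition~\ref{prop: all clusters are small}. By the comparison in \cite[Lemma~6.8]{JL26}, a cluster crossing a fixed annulus is ruled out by a separating sheet in a Mandelbrot fractal percolation. The retention parameter of that percolation tends to \(1\) as \(\alpha\downarrow0\), and \cite{CCD88,CCGS91} show that such a sheet then exists with probability tending to \(1\). This gives
\[
\Pb\bigl(\partial\mathcal B_1\overset{\Lc^\alpha}{\longleftrightarrow}\partial\mathcal B_R\bigr)\to0 \quad\text{for a fixed } R .
\]
Next, cover \(\mathcal B_{e^r}\) by finitely many annuli congruent to \(\mathcal B_\delta\setminus\mathcal B_{\delta/R}\); the number is finite because \(\delta\) is fixed. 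Scale invariance and a union bound then give the small-cluster event with probability tending to \(1\). Only after this do you send \(\delta\to0\). With that input, the rest of your argument goes through.
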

We also record an application to paths supported by a Brownian trajectory
and the BLS clusters it touches. Let \(\widetilde{B}\) be a standard
three-dimensional Brownian motion with \(\widetilde{B}(0)=0\), and let
\({\cal L}^{\alpha}\) be an independent BLS with intensity \(\alpha>0\). Recall
that \(\Lambda(\widetilde{B}([0,1]))\) is the union of \(\widetilde{B}([0,1])\) and all
BLS clusters it intersects. Let \(\Gamma\) be the set of continuous curves
\(\gamma:[0,1] \to \Rb^3\) such that \(\gamma(0)=0\),
\(\gamma(1)=\widetilde{B}(1)\), and
\[
    \gamma([0,1]) \subseteq \Lambda(\widetilde{B}([0,1])) .
\]
We define the generalized percolation dimension of \(\widetilde{B}\) by
\begin{equation}
    \zeta=\inf_{\gamma\in\Gamma}\dim_{\mathcal{H}}(\gamma),
\end{equation}
where \(\dim_{\mathcal{H}}(\gamma)\) denotes the Hausdorff dimension of
\(\gamma([0,1])\).
\begin{corollary}\label{coro:percolation dimension}
    There exists \(\alpha_1>0\) such that for all
    \(\alpha\in(0,\alpha_1)\), the generalized percolation dimension satisfies
    \[
        \zeta>1 \qquad \text{a.s.}
    \]
\end{corollary}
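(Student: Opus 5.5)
We deduce Corollary~\ref{coro:percolation dimension} from Theorem~\ref{theorem: continuity of intersection exponent}. The idea is that any admissible curve $\gamma\in\Gamma$ must pass through every point that locally disconnects $0$ from $\widetilde B(1)$ inside $\Lambda(\widetilde B([0,1]))$. Such points are essentially cut points of $\widetilde B$ that are not bridged by the BLS clusters hit by $\widetilde B$. Hence $\dim_{\mathcal H}(\gamma)\ge \dim_{\mathcal H}(A)$ for a suitable random set $A$ of ``BLS cut times'' of $\widetilde B$ that every $\gamma\in\Gamma$ must visit. It then suffices to show $\dim_{\mathcal H}(A)=2-\xi_\alpha(1,1)$ almost surely (or at least $\ge$ with positive probability, upgraded by a zero-one/scaling argument). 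Finally we choose $\alpha_1$ so that $2-\xi_\alpha(1,1)>1$ for $\alpha<\alpha_1$. This last step uses $\xi_\alpha(1,1)\to\xi(1,1)<1$, which combines Theorem~\ref{theorem: continuity of intersection exponent} with the bound from \cite{BL90}.

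\textbf{Step 1: the cut set.} Define $A$ as the set of points $x=\widetilde B(t)$, $t\in(0,1)$, such that
\[
\Lambda(\widetilde B[0,t))\cap\Lambda(\widetilde B(t,1])=\emptyset,
\]
where the enlargements use clusters of $\Lc^\alpha$ that intersect the respective pieces. This requires the two enlargements to be disjoint after removing the loops touching $x$ itself, which a.s.\ there are none of, since $\widetilde B$ a.s.\ does not hit a given loop at a prescribed point. Because $\alpha\in I_0$, all clusters are bounded. Given $x\in A$, the set $\Lambda(\widetilde B([0,1]))\setminus\{x\}$ splits into two disjoint pieces, one containing $0$ and the other containing $\widetilde B(1)$. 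One must check that these pieces are relatively closed away from $x$; this uses local finiteness of macroscopic clusters near a compact path in the subcritical regime. Any continuous $\gamma$ from $0$ to $\widetilde B(1)$ inside $\Lambda$ must therefore pass through $x$. Thus $A\subseteq\gamma([0,1])$ for every $\gamma\in\Gamma$, and $\zeta\ge\dim_{\mathcal H}(A)$.

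\textbf{Step 2: lower bound on $\dim_{\mathcal H}(A)$.} We run the second-moment argument used for Theorem~\ref{dimension}, now for the two arms of a single Brownian path rather than the two arcs of a loop. For a point $t$, the event that $\widetilde B(t)$ is an $\epsilon$-approximate BLS cut point is the following: the past and future arms, run from scale $\epsilon$ out to scale $1$, do not meet after enlargement by the clusters of $\Lc$ in the annulus. By Theorem~\ref{thm:up-to-constant} with $k=1$ and $\lambda=1$, this event has probability $\asymp \epsilon^{\xi_\alpha(1,1)}$. The two-point correlation is bounded by $\epsilon^{2\xi}|s-t|^{-\xi/2}$ up to constants, using the separation lemma and the quasi-multiplicativity it yields. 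A Frostman/energy argument on the time set gives a time-dimension of $1-\xi_\alpha(1,1)/2$. Brownian motion doubles dimension, so $\dim_{\mathcal H}A\ge 2-\xi_\alpha(1,1)$ with positive probability. We upgrade to almost surely via Blumenthal's 0-1 law combined with Brownian scaling and the scale invariance of $\Lc^\alpha$, applied to arbitrarily small initial segments.

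\textbf{Main obstacle.} The delicate part is Step 2's correlation estimate. In this setting, clusters can connect far-apart pieces of the path, so independence across scales is not automatic. We would use the separation lemma from the proof of Theorem~\ref{thm:up-to-constant}, together with the fact that in $I_0$ large clusters crossing an annulus have probability bounded away from $1$ per scale, to decouple the two local events. Since essentially the same estimates underlie Theorem~\ref{dimension}, we expect to reuse them with only notational changes.
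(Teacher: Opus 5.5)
Your architecture is the paper's. Your displayed definition of $A$ coincides with the paper's set $L$: clusters are pairwise disjoint, so $\Lambda(\widetilde B[0,t))\cap\Lambda(\widetilde B(t,1])=\emptyset$ if and only if $\Lambda(\widetilde B[0,t))\cap\widetilde B(t,1]=\emptyset$. As in the paper, every $\gamma\in\Gamma$ must pass through this set, its dimension comes from a Lawler-type moment argument fed by Theorem~\ref{thm:up-to-constant}, and Theorem~\ref{theorem: continuity of intersection exponent} with $\xi(1,1)<1$ concludes. Your remark that the two pieces of $\Lambda(\widetilde B[0,1])\setminus\{x\}$ must be relatively closed addresses a point the paper's one-line argument passes over.

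\textbf{The gap is in Step~2: your approximating event does not match the set from Step~1.} You call $\widetilde B(t)$ $\epsilon$-approximate if the arms between scales $\epsilon$ and $1$ around $\widetilde B(t)$ stay disjoint after enlargement by clusters of loops contained in that annulus. Letting $\epsilon\to0$ only restores loops and path near $\widetilde B(t)$. The limit set consists of cut points relative to a fixed unit-scale ball, and it strictly contains $A$.

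Two things can join $\Lambda(\widetilde B[0,t))$ to $\widetilde B(t,1]$ outside that ball: a loop crossing the outer sphere, or a return of $\widetilde B$ from outside. Either one lets an admissible curve bypass $\widetilde B(t)$. The discrepancy is not lower-dimensional. A single loop of diameter larger than $2$ hitting both $\widetilde B[0,\frac14]$ and $\widetilde B[\frac34,1]$ removes every point of $A$ with time in $[\frac14,\frac34]$, yet it is invisible to your local events. So an $F_{j,n}$-type argument as in Section~4 cannot repair it, and a Frostman measure built from your events bounds the dimension of the wrong set.

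\textbf{Fix.} Use the global events $\Lambda(\widetilde B[0,t-\epsilon^2])\cap\widetilde B[t+\epsilon^2,1]=\emptyset$ with the full soup, as in the argument of \cite{Lawler96} that the paper invokes. Their intersection over $\epsilon>0$ is exactly the set of cut times underlying $A$.
\begin{itemize}
\item The first-moment and two-point upper bounds follow from your local estimates. Use monotonicity, independence of the soup in disjoint annuli, and the upper bound of Theorem~\ref{thm:up-to-constant}.
\item The matching lower bound $\gtrsim\epsilon^{\xi_\alpha(1,1)}$ needs one more step. Force a well-separated configuration at the inner scale, and use the separation lemma at the outer scale. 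Then glue on, at constant cost, configurations in which the rest of the path and the loops crossing those scales create no reconnection. This is done as in the proof of the upper bound in Theorem~\ref{thm:up-to-constant}.
\end{itemize}

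\textbf{A smaller point on the almost-sure upgrade.} The event $\{\dim_{\mathcal H}A\ge d\}$ is not germ-measurable as it stands. You need a localization step before Blumenthal's law applies: use transience of $\widetilde B$ together with control of large clusters near $0$.
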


We now give some comments on proof strategy and organization of this paper.
\begin{itemize}
    \item \textbf{Section 2:} We review basic properties of the BLS and Brownian motion, and prove several classical lemmas used later.

    \item \textbf{Section 3:} We prove Theorem~\ref{thm:up-to-constant}. The lower bound follows from the strong Markov property, whereas the upper bound is more delicate and relies on a separation lemma. The separation-lemma framework developed in \cite{GPS13} is not directly applicable here, because it relies strongly on the ability to perform surgery on both random objects. Inspired by \cite{Lawler98}, we establish a separation lemma tailored to the three-dimensional BLS.

    \item \textbf{Section 4:} We prove Theorem~\ref{dimension} using standard first- and second-moment arguments. We first derive the Hausdorff dimension of local cut points on a single Brownian loop in the presence of an independent BLS, and then extend the result to the entire BLS.

    \item \textbf{Section 5:} We prove
Theorem~\ref{theorem: continuity of intersection exponent}. The only additional
input is a small-intensity cluster estimate, obtained from the comparison
between the three-dimensional BLS and Mandelbrot fractal percolation.
In particular, we show that the Hausdorff dimension of local cut points is strictly larger than $1$ whenever $\alpha \in (0,\alpha_1)$ for some $\alpha_1>0$.
\end{itemize}

\noindent {\bf Acknowledgements:} 
We thank Runsheng Liu for helpful discussions. XL is supported by 
National Key R\&D Program of China (No.~2021YFA1002700).

\section{Preliminaries}
\subsection{Notation}
Throughout the paper, we work in three-dimensional Euclidean space
\(\Rb^3\). For
\(x\in\Rb^3\), write \(|x|\) for its Euclidean norm. For two non-empty
subsets \(A,B\subseteq \Rb^3\), define
\[
    \operatorname{dist}(A,B)
    :=
    \inf\{|x-y|:x\in A,\ y\in B\}.
\]
We also write \(\operatorname{diam}(A)\) for the Euclidean diameter of
\(A\).

For \(x\in\Rb^3\) and \(r>0\), let \(\mathcal B_r(x)\) denote the ball of
radius \(r\) centered at \(x\), and write \(\mathcal B_r=\mathcal B_r(0)\)
when the center is the origin. Let \(\Db(x)=\mathcal B_1(x)\) be the unit ball centered at $x$ and $\Db=\Db(0)$. We use logarithmic notation for spheres:
\[
    S_r:=\partial \mathcal B_{e^r}.
\]
For \(r<s\), let
\[
    \mathcal A(r,s)
    :=
    \mathcal B_{e^s}\setminus \overline{\mathcal{B}}_{e^r}
\]
be the open annulus between \(S_r\) and \(S_s\). When no confusion can arise, a
path or a loop is identified with its trace. Let $B(t)$ denote standard
three-dimensional Brownian motion. For $x\in \Rb^3$, write $\Pb^x$ for the law
of standard three-dimensional Brownian motion starting from $x$. For a set
$A\subseteq \Rb^3$, let $\tau(A)$ denote the hitting time of $A$, and let
$\sigma(A)$ denote the exit time from $A$.

\subsection{The Brownian loop soup}\label{subsec:BLS}
In this subsection, we review and prove some basic facts about the
Brownian loop soup (BLS). The BLS was introduced by Lawler and Werner in
\cite{LW04}. In $d$-dimensional Euclidean space $\mathbb{R}^d$, the Brownian
loop measure is defined by
\begin{equation}
    \mu_{\text{loop}}=\int_{\mathbb{R}^d}dx\int_0^\infty \frac{dt}{t}\frac{\Pb_{x,x,t}}{(2\pi t) ^{\frac{d}{2}}},
\end{equation}
where  $\Pb_{x,x,t}$ is the law of Brownian bridge from $x$ to $x$ of duration $t>0$. The Brownian loop measure in a domain $D$ is given by restriction of the Brownian loop measure to loops which remain in $D$:
\begin{equation}
    \mu_{\text{loop}}^{D}(\mathrm{d}\gamma)= \mathbf{1}_{\gamma \subset D} \mu_{\text{loop}}(\mathrm{d}\gamma).
\end{equation}
The BLS $\mathcal{L}_D^\alpha$ with intensity $\alpha$ is a random
countable collection of loops in $D$, generated by a Poisson point process with
intensity measure $\alpha \mu^D_{\text{loop}}$. Basic properties of Poisson
point processes imply the following FKG inequality \cite[Lemma~2.1]{Janson84}.
	\begin{lemma}[FKG inequality]\label{lem:FKG}
		A function $f$ on the space of loop configurations is said to be \emph{increasing} if, for any realizations $\Lc'\subseteq\Lc''$ of the BLS, we have $f(\Lc') \le f(\Lc'')$. Then, for any two increasing functions $f$ and $g$,
		we have
		\begin{align*}
		\mathbb{E}[fg]\ge \mathbb{E}[f] \mathbb{E}[g].
		\end{align*}
	\end{lemma}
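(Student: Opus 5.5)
The plan is to follow the standard Harris–FKG argument for Poisson point processes. First reduce to a finite-dimensional product setting, then pass to the limit.

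\emph{Reduction to bounded cylinder functions.} By monotone class and $L^2$ approximation it suffices to treat bounded increasing $f,g$ that depend only on the restriction of $\Lc$ to a set of loops of finite $\mu_{\rm loop}$-measure. For a general increasing $f\in L^2$, truncate to $f\wedge M$; this remains increasing. Then take conditional expectations $f_n=\mathbb E[f\mid \Lc\cap E_n]$ for an increasing sequence of finite-measure loop sets $E_n$ exhausting the loop space. Because the Poisson restrictions to $E_n$ and $E_n^c$ are independent, $f_n$ is again increasing: it is the integral of $f(\Lc_{E_n}\cup\Lc')$ over an independent copy $\Lc'$ on $E_n^c$. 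Martingale convergence gives $f_n\to f$ in $L^2$, so the inequality passes to the limit.

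\emph{Finite-measure case.} On a finite-measure set $E$, the restricted configuration is a Poisson number $N$ of i.i.d.\ loops with law $\mu|_E/\mu(E)$. I would discretize by partitioning $E$ into finitely many measurable cells $E_1,\dots,E_m$. Approximating $f,g$ by functions of the cell-count vector $(N_1,\dots,N_m)$ is not monotone-preserving in general. The cleaner route is to pass through a fine partition and condition. Fix a refining sequence of partitions and let $f^{(j)}$ be the conditional expectation of $f$ given the $\sigma$-field generated by the refined data, which is the configuration restricted to the first $j$ cells together with their atoms. Then induct on cells: the counts $N_i$ are independent Poisson variables, and each configuration in a cell is i.i.d.\ given its count. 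Adding loops is monotone, so for increasing $f,g$ one can apply Harris's inequality in one coordinate at a time.

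A one-dimensional Poisson variable is totally ordered, so Chebyshev's sum inequality gives $\mathbb E[\phi(N)\psi(N)]\ge \mathbb E\phi\,\mathbb E\psi$ for increasing $\phi,\psi$. Conditioning coordinate by coordinate then yields the product-space inequality in the usual way. Each conditional expectation over the remaining coordinates preserves monotonicity in the earlier ones.

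\emph{Alternative route.} A more robust option is to realize the Poisson process as a limit of Bernoulli processes. Split $E$ into $n$ pieces of measure $\mu(E)/n$, and in each piece independently include one loop sampled from the piece with probability $p_n=1-e^{-\alpha\mu(E)/n}$. This yields a process that is exactly Poisson up to the event of multiple points in some piece, whose probability tends to zero. On the Bernoulli product space, $f$ and $g$ are increasing in each Boolean coordinate for any fixed choice of sampled loops. Harris's inequality therefore holds conditionally on the sampled loops, and one then integrates.

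\emph{Main obstacle.} The expected obstacle is the measure-theoretic care in the approximation steps: showing that monotonicity survives conditioning, and that the limit commutes with products, which requires uniform boundedness. Both are handled by truncation and dominated convergence. Since the statement is precisely \cite[Lemma~2.1]{Janson84}, I would ultimately cite it after noting that the BLS is a Poisson point process with $\sigma$-finite intensity $\alpha\mu^D_{\rm loop}$.
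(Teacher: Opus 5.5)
The paper proves nothing here; it simply invokes \cite[Lemma~2.1]{Janson84} for Poisson point processes, which is where your proposal also ends. Your reduction to bounded functions of the soup restricted to a finite-measure set of loops is sound: $f_n=\mathbb{E}[f\mid \Lc\cap E_n]$ is increasing, and martingale convergence passes the inequality to the limit. The self-contained sketch you give for the finite-measure case, however, has three problems.

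\textbf{The count route works.} Your claim that passing to the cell-count vector is not monotone-preserving is false. Conditionally on $(N_1,\dots,N_m)$, the loops in $E_i$ are $N_i$ i.i.d.\ samples from $\mu|_{E_i}/\mu(E_i)$. Realizing $N_i+1$ loops by adding an independent extra loop to the first $N_i$ shows that $F(n):=\mathbb{E}[f\mid N=n]$ is increasing in each $n_i$.

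\textbf{The cell-by-cell induction does not close.} Chebyshev's inequality for a single Poisson count only handles functions of that count, and a function of the configuration in one cell is not one. Conditionally on the count, increasing functions can be negatively correlated. For example, given $N_i=1$, the increasing events ``some loop lies in $A$'' and ``some loop lies in $E_i\setminus A$'' are disjoint. So ``Harris one cell at a time'' presupposes FKG within a cell, which is the statement you are proving.

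\textbf{The Bernoulli route is missing a step.} Integrating the conditional Harris inequality only gives $\mathbb{E}[fg]\ge \mathbb{E}\bigl[\mathbb{E}[f\mid\gamma]\,\mathbb{E}[g\mid\gamma]\bigr]$. The covariance of $\mathbb{E}[f\mid\gamma]$ and $\mathbb{E}[g\mid\gamma]$ has no sign in general. You need an extra argument, for instance Efron--Stein: resampling $\gamma_k$ changes $\mathbb{E}[f\mid\gamma]$ by at most $2\|f\|_\infty p_n$, so its variance is $O(np_n^2)=O(1/n)\to 0$.

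The clean self-contained proof is the one you discarded. Take a refining sequence of finite partitions generating the $\sigma$-field on $E$. The functions $F_j=\mathbb{E}[f\mid N^{(j)}]$ and $G_j=\mathbb{E}[g\mid N^{(j)}]$ are increasing functions of independent Poisson counts. Harris's inequality on the product of totally ordered spaces $\mathbb{Z}_+^m$ then gives $\mathbb{E}[F_jG_j]\ge \mathbb{E}[f]\,\mathbb{E}[g]$. The counts on these partitions generate $\sigma(\Lc\cap E)$, so martingale convergence gives $F_j\to f$ and $G_j\to g$ in $L^2$, hence $\mathbb{E}[fg]\ge\mathbb{E}[f]\,\mathbb{E}[g]$.
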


The following lemma controls the size of clusters in a subcritical BLS.

\begin{lemma}\label{lem:cluster_small}
Fix $\alpha_*\in I_0$.  For all $\delta>0$ and $\alpha \in [0, \alpha_*]$, the probability that every cluster in $\Lc^\alpha_\Db$ has diameter at most $\delta$ is larger than some $C(\alpha_*,\delta)>0$, where $\Lc^\alpha_\Db$ is the subcritical Brownian loop soup in 
the unit ball and $C(\alpha_*,\delta)>0$ is a constant depending on $\alpha_*,\delta$ only. 
\end{lemma}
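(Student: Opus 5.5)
By monotonicity, the event $E_\delta(\Lc)$ that every cluster of $\Lc$ has diameter at most $\delta$ is decreasing in the loop configuration. The soup $\Lc^\alpha_\Db$ for $\alpha\le\alpha_*$ can be coupled as a subset of $\Lc^{\alpha_*}_\Db$ by thinning. Hence $\Pb(E_\delta(\Lc^\alpha_\Db))\ge \Pb(E_\delta(\Lc^{\alpha_*}_\Db))$, and it suffices to treat $\alpha=\alpha_*$ with $\delta$ fixed. The plan is to combine three ingredients: subcriticality, a finite cover of the unit ball, and the FKG inequality (Lemma~\ref{lem:FKG}) applied to decreasing events.

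\textbf{Step 1: reduce to small loops.} Split $\Lc^{\alpha_*}_\Db$ into loops of diameter $\ge \eta$ and loops of diameter $<\eta$, where $\eta\le\delta/10$ is a small parameter. The $\mu_{\rm loop}$-mass of loops contained in $\Db$ with diameter $\ge\eta$ is finite, by the scaling computation $\mu_{\rm loop}(\text{loops of diameter}\ge\eta \text{ meeting }\Db)\lesssim \eta^{-3}$. So with probability $e^{-\alpha_*c\eta^{-3}}>0$ there are no such loops. Because the two parts of the soup are independent Poisson processes, it remains to show that, with positive probability, all clusters formed by loops of diameter $<\eta$ have diameter $\le\delta$.

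\textbf{Step 2: use subcriticality locally.} Since $\Pi(\alpha_*)=0$, the full-space soup $\Lc^{\alpha_*}$ has no unbounded cluster a.s. Hence the cluster structure near any bounded set is a.s. made of bounded clusters. Concretely,
\[
\Pb\big(\mathcal B_1 \overset{\Lc^{\alpha_*}}{\longleftrightarrow} \partial\mathcal B_R\big)\to 0 \quad\text{as } R\to\infty.
\]
To justify this limit, note that the events decrease to the event that some cluster meets $\mathcal B_1$ and is unbounded. Local finiteness (only finitely many loops of diameter $\ge1$ meet $\mathcal B_R$) ensures the limiting event indeed requires an unbounded cluster, so the limit is $0$. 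By scaling, and restricting to loops in $\Db$ (which only removes loops, and the event is increasing), for any $x\in\Db$
\[
\Pb\big(\mathcal B_{\delta/(2R)}(x)\overset{\Lc^{\alpha_*}_\Db}{\longleftrightarrow}\partial\mathcal B_{\delta/2}(x)\big)\le \epsilon(R)\to0 .
\]
Pick $R$ with $\epsilon(R)<1$.

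\textbf{Step 3: cover and apply FKG.} Cover $\Db$ by $N=N(\delta,R)$ balls $\mathcal B_{\delta/(2R)}(x_j)$. If some cluster has diameter $>\delta$, it meets some $\mathcal B_{\delta/(2R)}(x_j)$ and exits $\mathcal B_{\delta/2}(x_j)$ (assuming $R\ge2$, say). Each non-connection event $\{\mathcal B_{\delta/(2R)}(x_j)\not\leftrightarrow\partial\mathcal B_{\delta/2}(x_j)\}$ is decreasing, so FKG applied to decreasing functions gives
\[
\Pb(E_\delta)\ge\prod_{j=1}^N(1-\epsilon(R))\ge(1-\epsilon(R))^N>0.
\]
This depends only on $\alpha_*$ and $\delta$. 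The argument actually makes Step 1 unnecessary; it is retained only as a fallback, in case the limit in Step 2 needs control of large loops.

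\textbf{Main obstacle.} The delicate step is Step 2: deducing from $\Pi(\alpha_*)=0$ that the crossing probability from $\mathcal B_1$ to $\partial\mathcal B_R$ tends to $0$. This requires the measurability and continuity argument that the decreasing intersection of the crossing events equals the existence of an unbounded cluster meeting $\mathcal B_1$. The first thing I would try is to use the fact that a.s. only finitely many loops of diameter $\ge1$ intersect any bounded region, together with a compactness argument on chains of loops. The case $\alpha=0$ is trivial, since the soup is then empty.
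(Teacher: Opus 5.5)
Your proposal is correct and follows essentially the same route as the paper. You reduce to $\alpha=\alpha_*$ by monotonicity, use subcriticality to find $R$ with annulus-crossing probability below $1$, rescale to scale $\delta$, cover the ball by finitely many such annuli, and multiply the decreasing non-crossing events via FKG; this is the covering-plus-FKG argument the paper cites from \cite{GLQ26}, and your Step~1 is indeed superfluous.

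One caveat concerns your sketched justification of the limit in Step~2. Local finiteness of loops of diameter $\ge 1$ does not by itself rule out infinitely many distinct bounded clusters, made of small loops, that meet $\mathcal B_1$ and have unbounded diameters, so that sketch is incomplete. The paper likewise takes this limit as a direct consequence of $\Pi(\alpha_*)=0$ without proof. Both arguments only use the weaker fact that the crossing probability is $<1$ for some $R$.
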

\begin{proof}
    Since the probability is decreasing in $\alpha$, it suffices to prove
    the result for $\alpha=\alpha_*$. Because $\alpha_*$ is subcritical, the
    probability that there exists a cluster crossing the annulus
    $\mathcal{B}_r \setminus \mathcal{B}_1$ tends to zero as $r \to \infty$.
    Hence there exists $r_0>1$ such that, with probability at least
    $C'(\alpha_*)>0$, no cluster crosses the annulus
    $\mathcal{B}_{r_0} \setminus \mathcal{B}_1$. The same argument as in
    \cite[Lemma~2.17]{GLQ26} then completes the proof.
\end{proof}
\begin{lemma}\label{lem:cluster_small 1}
    Fix $\alpha_*$ in $I_0$. Let $\Lc^{\alpha}$ be the subcritical Brownian loop soup in $\mathbb{R}^3$. 
    For all $\delta> 0$ and $\alpha \in [0, \alpha_*]$, the probability that every cluster intersecting the unit sphere $S_0$ has diameter at most $\delta$ is larger than some $C(\alpha_*,\delta)>0$, where $C(\alpha_*,\delta)>0$ is a constant depending on $\alpha_*,\delta$ only. 
\end{lemma}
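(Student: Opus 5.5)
We deduce Lemma~\ref{lem:cluster_small 1} from Lemma~\ref{lem:cluster_small} by a scaling-and-covering argument plus FKG, after first handling the loops of large diameter separately.

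Since the event $E_\delta:=\{\text{every cluster of }\Lc^\alpha\text{ intersecting }S_0\text{ has diameter}\le\delta\}$ is decreasing in the loop configuration, its probability is decreasing in $\alpha$. It therefore suffices to treat $\alpha=\alpha_*$, and we may assume $\delta<1$.

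\textbf{Step 1: large loops.} Let $F$ be the event that no loop of $\Lc^{\alpha_*}$ both intersects $\mathcal B_2$ and has diameter at least $\delta/10$. The $\mu_{\rm loop}$-mass of loops meeting $\mathcal B_2$ with diameter in $[2^j,2^{j+1}]$ is bounded uniformly in $j$ for $j\ge0$, by scale invariance in $d=3$: there are $O(1)$ boxes at scale $2^j$ that such loops can be rooted near. Summing over the scales below $1$, with $O(\delta^{-3})$ boxes per scale, shows that the mass of loops of diameter $\ge\delta/10$ meeting $\mathcal B_2$ is finite except for the contribution of huge loops. For huge loops, the mass of loops of diameter $\ge R$ meeting $\mathcal B_2$ is $O(R^{-1})$ in $d=3$, since a loop of time-length $t$ rooted at distance $\rho$ hits $\mathcal B_2$ with probability about $\rho^{-1}$. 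Hence the total mass is finite and $\Pb(F)=e^{-\alpha_*\mu(\cdot)}>0$.

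\textbf{Step 2: small loops.} Cover a neighbourhood of $S_0$ by $N=N(\delta)$ balls $\Db_i=\mathcal B_{\delta/10}(x_i)$ such that every loop of diameter $<\delta/10$ meeting $\mathcal B_{1+\delta}\setminus\mathcal B_{1-\delta}$ is contained in some $\mathcal B_{\delta/5}(x_i)$. Apply Lemma~\ref{lem:cluster_small}, rescaled by Brownian scaling of the loop soup, to each ball $\mathcal B_{\delta/5}(x_i)$ with target diameter $\delta/(10N')$ for a suitable $N'$. Let $G_i$ be the event that every cluster of loops contained in $\mathcal B_{\delta/5}(x_i)$ has diameter at most $\delta/(10N')$. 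By scale invariance, each $G_i$ has probability at least a constant depending only on $\alpha_*,\delta$. All of $F$ and the $G_i$ are decreasing events, so by Lemma~\ref{lem:FKG} we get $\Pb(F\cap\bigcap_i G_i)\ge \Pb(F)\prod_i\Pb(G_i)>0$.

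\textbf{Step 3: concluding, and the main obstacle.} On $F\cap\bigcap_iG_i$, we must show that a cluster meeting $S_0$ stays small. This is the main obstacle: a cluster can chain through many overlapping balls, each contributing a small piece, so the smallness inside each ball does not immediately bound the whole cluster. I would fix it using Lemma~\ref{lem:cluster_small}'s own proof, which gives a non-crossing statement, rather than its bare conclusion. Specifically, I would replace $G_i$ by the event $H_i$ that no cluster of loops inside $\mathcal B_{\delta/2}(x_i)$ crosses the annulus $\mathcal B_{\delta/2}(x_i)\setminus\mathcal B_{\delta/4}(x_i)$, while keeping the $x_i$ chosen so that the balls $\mathcal B_{\delta/4}(x_i)$ cover the $\delta/10$-neighbourhood of $S_0$. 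The event $H_i$ has probability bounded below uniformly, by subcriticality and scaling, exactly as in the proof of Lemma~\ref{lem:cluster_small}.

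Now take a cluster meeting $S_0$ at a point in $\mathcal B_{\delta/4}(x_i)$. If it had diameter $>\delta$, it would exit $\mathcal B_{\delta/2}(x_i)$. On $F$ all its loops near there have diameter $<\delta/10$, so the chain of loops leading out of the ball would consist of loops that, up to its exit, lie in $\mathcal B_{\delta/2}(x_i)$. These loops would form a crossing of the annulus by a cluster of loops inside $\mathcal B_{\delta/2}(x_i)$, contradicting $H_i$. Hence $E_\delta$ holds, with probability at least $\Pb(F)\prod_i\Pb(H_i)=:C(\alpha_*,\delta)>0$.

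The subtle point in this last step is that the truncated chain must use only loops inside the ball. To ensure this, I would tighten the size threshold in $F$ to $\delta/10$, so that any loop meeting $\overline{\mathcal B}_{\delta/2-\delta/10}(x_i)$ is contained in $\mathcal B_{\delta/2}(x_i)$, and define the crossing in $H_i$ with respect to the slightly smaller annulus between radii $\delta/4$ and $\delta/2-\delta/10$.
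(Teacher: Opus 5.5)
Your argument is correct, but it is organized differently from the paper's. The paper never separates out large loops. It uses subcriticality of the \emph{whole-space} soup to get an aspect ratio $r_0$ such that, with probability at least $C'(\alpha_*)$, no cluster of $\Lc^{\alpha_*}$ crosses $\mathcal B_{r_0}\setminus\mathcal B_1$. It then covers $S_0$ by finitely many rescaled copies of this annulus with outer radius of order $\delta$, observes that a cluster meeting $S_0$ with diameter $>\delta$ must cross one of them, and concludes by FKG. Because these non-crossing events concern full clusters, loops of every size are automatically accounted for, and the argument is dimension-free (it is the planar argument of \cite{GLQ26}).

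You instead reduce to the bounded-domain statement, Lemma~\ref{lem:cluster_small}, by splitting off the event $F$ that no loop of diameter $\ge\delta/10$ meets $\mathcal B_2$. This event has positive probability because in $d=3$ the loop measure of such loops is finite: the large-scale tail is $O(R^{-1})$, since a unit-scale loop hits a ball of radius $\epsilon$ with probability $O(\epsilon)$. That tail, not the uniform-in-$j$ bound, is what makes the sum converge. This is a genuinely three-dimensional input; in the plane this mass is infinite. On $F$, truncating the chain at the first loop leaving $\mathcal B_{2\delta/5}(x_i)$ correctly yields a crossing by loops contained in $\mathcal B_{\delta/2}(x_i)$; note the truncation index is at least $2$, since the first loop has diameter $<\delta/10<3\delta/20$. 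So your route costs an extra Poisson mass computation but uses Lemma~\ref{lem:cluster_small} purely as a black box, whereas the paper's route is shorter and needs only the subcritical crossing estimate.

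One small correction. The annulus in $H_i$ has fixed aspect ratio $8/5$. Subcriticality alone only makes crossings of annuli with \emph{large} aspect ratio unlikely, so it does not give $\Pb(H_i)\ge c$ ``exactly as in the proof of Lemma~\ref{lem:cluster_small}''. What gives it immediately is the \emph{bare conclusion} of Lemma~\ref{lem:cluster_small}, rescaled to $\mathcal B_{\delta/2}(x_i)$ with diameter threshold $\delta/20$, which is less than the annulus width $3\delta/20$. Your remark that you need the lemma's proof rather than its conclusion is therefore backwards. What actually resolved the chaining problem from Step~2 was the geometry: the inner balls cover $S_0$, and $F$ confines the truncated chain to the outer ball.
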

\begin{proof}
    As in the proof of Lemma~\ref{lem:cluster_small}, we can choose
    $r_0$ so that, with probability at least $C'(\alpha_*)>0$, no cluster
    crosses the annulus $\mathcal{B}_{r_0} \setminus \mathcal{B}_1$. Moreover,
    the unit sphere can be covered by finitely many annuli such that any
    cluster intersecting $S_0$ and having diameter larger than $\delta$ must
    cross at least one of them. The FKG inequality then completes the proof.
\end{proof}
\subsection{Brownian motions and path decompositions}
In this subsection, we review some basic facts about Brownian motion and
Brownian path decompositions, following \cite{HLLS22} and \cite{JL26}. It is
useful to view Brownian motion as a measure on paths. We write
\begin{equation}
    \mu_{x,y,t}= p_t(x,y) \Pb_{x,y,t},
\end{equation}
where $p_t(x,y)=(2\pi t)^{-\frac{3}{2}}e^{-\frac{|y-x|^2}{2t}}$ is the heat kernel.
Now we define the Brownian path measure as the following:
$$\mu_{x,y}=\int_{0}^{\infty} \mu_{x,y,t} dt.$$
Moreover, $\mu_{x,y}$ is a finite measure with total mass
$$G(x,y)=\int_0^{\infty} p_{t}(x,y)dt =\frac{1}{2\pi|x-y|}.$$ For $D\subset \mathbb{R}^{d}$ and $x,y \in D$, let $\mu_{x,y}^{D}$ be the restriction of $\mu_{x,y}$ to the curves that remain in $D$. In the remainder of the paper, we assume that $D$ has a piecewise smooth boundary.

We define the interior-to-boundary and boundary-to-boundary measures as
limits of the measures above under appropriate rescaling. For $x\in D$ and
$y\in \partial D$, define
$$\mu_{x,y}^D= \lim_{\epsilon\to0} \frac{\mu_{x,y+\epsilon n_y}}{2\epsilon},$$
where $n_y$ denotes the inward unit normal at $y$ into $D$. Similarly, the
boundary-to-boundary measure is defined by
$$\mu_{x,y}^D= \lim_{\epsilon\to0} \frac{\mu_{x+\epsilon n_x,y+\epsilon n_y}}{2\epsilon^2}.$$
We next define the bubble measure. For $x\in \partial D$, let
\begin{equation}\label{bubble measure}
    \mu^{{\rm bub},D}_x = \lim_{y\to x, y\in\partial D} \mu_{x,y}^{D}.
\end{equation}

We now define the probability measure $\mu_{0,r}^{\#}$ on Brownian
excursions between $S_0$ and $S_r$ in the annulus
$\mathcal{A}(0,r):=\mathcal{B}_{e^r}\setminus \mathcal{B}_1$. Set
\[
\mu_{0,r}=\int_{S_0}\int_{S_r}\mu^{\mathcal{A}(0,r)}_{x,y}\, dx \,  dy.
\]
Let $\mu_{0,r}^{\#}:=\mu_{0,r}/|\mu_{0,r}|$ be the corresponding
normalized probability measure, which we call the Brownian excursion measure.

We will use several tools from Brownian path decomposition. The next
lemma can be found in \cite[Proposition~2.2]{HLLS22}. Let $B_t$ be a Brownian
motion with $B_0 \ne 0$, and define
$$Y_t = \frac{B_{r(t)}}{|B_{r(t)}|^2}, \quad where 
\quad \int_{0}^{r(t)}\frac{ds}{|B_s|^2}=t.$$
\begin{lemma}\label{lem:inversion invariance}
    Let $B_t$ be a standard Brownian motion with $0< |B_0| < e ^k$. Then the distribution of 
    $$Y_s ,\quad 0 \le s \le r^{-1}(T_k),$$
    is the same as Brownian motion starting at $\frac{B_0}{|B_0|^2}$, stopped at $T_{-k}$, conditioned on $T_{-k}<\infty$.
\end{lemma}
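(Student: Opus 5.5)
This is the classical inversion invariance of three-dimensional Brownian motion, up to conditioning. The plan is to deduce it from two ingredients: Itô's formula applied to the Kelvin-type transform $x\mapsto x/|x|^2$, and the $h$-transform description of Brownian motion conditioned to hit a small sphere. Write $\phi(x)=x/|x|^2$ and set $h(y)=\Pb^{y}(T_{-k}<\infty)=\min\{1,e^{-k}/|y|\}$. For $|y|>e^{-k}$ this is $e^{-k}/|y|$, which is harmonic in $\Rb^3\setminus\{0\}$.

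\textbf{Step 1.} Apply Itô's formula to $\phi(B_t)$ for $t<T_k$; note that $B$ a.s. never hits $0$ in dimension three. The Jacobian of $\phi$ is $|x|^{-2}$ times an orthogonal reflection matrix, so the martingale part of $\phi(B)$ has quadratic variation $|B_t|^{-4}\,dt\cdot I$. For the drift, a direct computation gives $\Delta \phi^i(x)=-2x_i/|x|^4$; this is the point where the dimension enters, since $\phi^i$ is not harmonic in $d=3$. Under the time change $t\mapsto r(t)$ with $dr/dt=|B_{r}|^2$, the martingale part becomes a standard Brownian motion $W$ by Lévy's characterization, and the drift becomes $-\frac{1}{2}\cdot 2\,\frac{B}{|B|^4}\cdot|B|^2\,dt$.

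\textbf{Step 2.} Express the drift in terms of $Y=\phi(B)$. Using $B=Y/|Y|^2$ and $|B|=1/|Y|$, the drift is $-Y/|Y|^2\,dt$, so
\[
dY_t=dW_t-\frac{Y_t}{|Y_t|^2}\,dt .
\]
This drift equals $\nabla\log h(Y_t)$ with $h(y)=e^{-k}/|y|$. Hence $Y$ solves the SDE of Doob's $h$-transform of Brownian motion by $h$, that is, of Brownian motion conditioned to hit $S_{-k}=\{|y|=e^{-k}\}$, stopped there. The SDE has smooth coefficients on $\{|y|>e^{-k}\}$, so it has pathwise uniqueness and the law of $Y$ is determined.

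\textbf{Step 3.} Match the time horizons. Since $|Y|=1/|B|$, the time $r^{-1}(T_k)$ is exactly the hitting time of $S_{-k}$ by $Y$. We must check that this time is finite a.s., i.e. that $\int_0^{T_k}|B_s|^{-2}\,ds<\infty$. This holds because $T_k<\infty$ a.s. and $B$ stays away from $0$ on $[0,T_k]$. Girsanov's theorem, or equivalently the standard identification of $h$-processes, then gives equality in law between $(Y_s)_{s\le r^{-1}(T_k)}$ and Brownian motion from $\phi(B_0)$ conditioned on $T_{-k}<\infty$ and stopped at $T_{-k}$. The starting point $\phi(B_0)$ has norm $1/|B_0|>e^{-k}$, as required.

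The main obstacle is Step 2: carrying out the Itô and time-change bookkeeping correctly, in particular the sign and constant of the drift, and checking that it is precisely $\nabla\log h$. A cleaner alternative, if the SDE route is delicate, is the generator computation. One checks that the generator of $\phi(B)$, after the time change, is $\frac{1}{2}|y|^{-2}\cdot|y|^{2}\Delta$ conjugated by the Kelvin transform. This reduces to the identity $\Delta\big(|x|^{-1}f(\phi(x))\big)=|x|^{-5}(\Delta f)(\phi(x))$, which yields the $h$-transform generator $\frac{1}{2}h^{-1}\Delta(h\,\cdot)$ directly.
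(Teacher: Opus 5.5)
The paper does not prove this lemma; it quotes Proposition~2.2 of \cite{HLLS22}. Your route is the standard one: It\^o for $\phi(x)=x/|x|^2$, a time change plus L\'evy's characterization, then matching the drift with $\nabla\log h$ for $h(y)=e^{-k}/|y|$. However, Steps~1--2 contain two mistakes that cancel each other.

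First, with $dr/dt=|B_r|^2$ the martingale part of $Y$ has quadratic variation
\[
\int_0^{r(t)}|B_s|^{-4}\,ds=\int_0^t|B_{r(v)}|^{-2}\,dv=\int_0^t|Y_v|^2\,dv ,
\]
not $t$. So L\'evy's theorem does not produce a standard Brownian motion. Second, the drift you correctly obtain, $-B/|B|^2\,dt$, equals $-Y\,dt$, because $B/|B|^2=\phi(B)=Y$; it is not $-Y/|Y|^2\,dt$. What your computation actually yields is $dY_t=|Y_t|\,d\widetilde W_t-Y_t\,dt$, which is the $h$-process run at speed $|Y|^2$. For instance, $|Y|=1/|B_{r(\cdot)}|$ is then a geometric-Brownian-type martingale, whereas for the $h$-process with $h(y)=1/|y|$ the radial part must be a one-dimensional Brownian motion.

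The missing point is that the clock must be $\int|D\phi(B_s)|^2\,ds$ with $|D\phi(x)|^2=|x|^{-4}$. That is, the time change has to be $\int_0^{r(t)}|B_s|^{-4}\,ds=t$, or $dr/dt=|B_r|^4$. With the exponent $2$ as printed, the lemma is false for the parametrized process. It holds only at the level of traces, which is all that matters when the paper inverts the outer pieces in the proof of Theorem~\ref{thm:up-to-constant}. You should have flagged this rather than forcing the computation to fit.

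With $dr/dt=|B_r|^4$ your argument goes through:
the quadratic variation is $t\,I$;
the drift is $-B|B|^{-4}\cdot|B|^4\,dt=-B\,dt=-Y/|Y|^2\,dt=\nabla\log h(Y)\,dt$;
Step~3 needs $\int_0^{T_k}|B_s|^{-4}\,ds<\infty$, which holds for the reason you give.

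Your generator alternative, carried out carefully, would have exposed the problem. Since $f\circ\phi$ is the Kelvin transform of $hf$ with $h(y)=1/|y|$, the identity $\Delta\bigl(|x|^{-1}F(\phi(x))\bigr)=|x|^{-5}(\Delta F)(\phi(x))$ shows that the untime-changed process $\phi(B)$ has generator $|y|^{4}\cdot\frac{1}{2h}\Delta(h\,\cdot)$. The factor to be removed is therefore $|y|^4=|B|^{-4}$, not $|y|^2$.
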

\begin{lemma}\label{lem:glueing}
    Let $W_1= A_{\frac{1}{20}} \cap \mathcal{A}(0,2)$ and $W_2= A_{\frac{1}{18}} \cap \mathcal{A}(-\frac{1}{10},\frac{21}{10})$ be two wedges with $W_1 \subset W_2$. There exists a universal constant $C$ such that for any $x_1 \in W_1 \cap S_0$, $x_2 \in W_1 \cap S_2$,
    \begin{equation}\label{eq:lem2.5conc}
    \Pb^{x_1}(B[0, \sigma_2] \in W_2 \mid B(\sigma_2)=x_2) \ge C,    
    \end{equation}
    where $\sigma_2$ is the last time $B$ visits $S_2$.
\end{lemma}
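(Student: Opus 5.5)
The plan is to write the conditional law in \eqref{eq:lem2.5conc} explicitly through a last-exit decomposition, and to reduce the estimate to a ratio of Green's functions. I assume that \(A_\theta\) denotes the cone of half-angle \(\theta\) about a fixed axis. Then \(\overline{W_1}\) is a compact subset of the open set \(W_2\), since \(\frac1{20}<\frac1{18}\) and \([0,2]\subset(-\frac1{10},\frac{21}{10})\). Consequently there is a fixed distance \(\delta_0>0\) with \(\operatorname{dist}(\overline{W_1},\partial W_2)\ge\delta_0\).

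\textbf{Step 1 (last-exit decomposition).} For a Brownian motion from \(x_1\) with \(|x_1|=1<e^2\), the last visit time \(\sigma_2\) of \(S_2\) is a.s.\ finite by transience. The standard last-exit decomposition in the path-measure language of this section gives, for \(y\in S_2\),
\[
\Pb^{x_1}\big(B[0,\sigma_2]\in\cdot,\ B(\sigma_2)\in dy\big)=\mu_{x_1,y}(\cdot)\,e(y)\,dy .
\]
Here \(\mu_{x_1,y}\) is the interior-to-interior Brownian path measure of total mass \(G(x_1,y)\), and \(e(y)\) is the escape density of the path after time \(\sigma_2\), i.e.\ the density of the equilibrium measure of \(\overline{\mathcal B}_{e^2}\). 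By rotational invariance, \(e(y)\) is constant on \(S_2\). Conditioning on \(B(\sigma_2)=x_2\) (defined via this regular disintegration) therefore gives the probability measure \(\mu_{x_1,x_2}/G(x_1,x_2)\). Restricting to paths in \(W_2\), the desired probability equals
\[
\frac{\mu^{W_2}_{x_1,x_2}(1)}{G(x_1,x_2)}=\frac{G_{W_2}(x_1,x_2)}{G(x_1,x_2)} .
\]
Here \(G_{W_2}\) is the Dirichlet Green's function of \(W_2\), which is the total mass of \(\mu^{W_2}_{x_1,x_2}\).

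\textbf{Step 2 (bounds).} The denominator is controlled directly: \(G(x_1,x_2)=\frac{1}{2\pi|x_1-x_2|}\le \frac{1}{2\pi(e^2-1)}\). For the numerator, \(G_{W_2}\) is continuous and strictly positive on \(\{(x,y)\in W_2\times W_2: x\ne y\}\), because \(W_2\) is a connected open set. The pairs \((x_1,x_2)\in (\overline{W_1}\cap S_0)\times(\overline{W_1}\cap S_2)\) form a compact set on which \(|x_1-x_2|\ge e^2-1\), so \(G_{W_2}\) attains a positive minimum \(c\) there. Compactness yields uniformity only because \(\delta_0>0\) keeps both endpoints away from \(\partial W_2\). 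This minimum could also be made quantitative by a Harnack chain through the connected set \(W_2\) at scale \(\delta_0/2\), combined with the comparison \(G_{W_2}(x,y)\ge G(x,y)-\sup_{z\in\partial W_2}G(z,y)\) for \(y\) close to \(x\). Taking \(C=2\pi(e^2-1)c\) then gives \eqref{eq:lem2.5conc}.

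\textbf{Main obstacle.} The delicate step is Step 1, namely the rigorous conditioning on the null event \(\{B(\sigma_2)=x_2\}\). I would handle it by conditioning instead on \(B(\sigma_2)\in \mathcal B_\epsilon(x_2)\cap S_2\) and letting \(\epsilon\to0\). The joint continuity of \(y\mapsto \mu^{W_2}_{x_1,y}\) and \(y\mapsto\mu_{x_1,y}\) (through the heat kernel representation) justifies this limit. Alternatively, I would cite the last-exit decomposition in \cite{HLLS22} or \cite{JL26}, where the constancy of the escape density on a sphere is exactly what makes the conditioned law equal to the normalized \(\mu_{x_1,x_2}\).
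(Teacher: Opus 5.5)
Your proposal is correct and follows the paper's argument: the paper also writes the conditional probability as a ratio of Green's functions and bounds the numerator below and the denominator above by universal constants. Your identification of that ratio as $G_{W_2}(x_1,x_2)/G(x_1,x_2)$ via the last-exit decomposition makes the paper's terse one-line step precise. One small correction: the constancy of the escape density on $S_2$ is not what is needed, since the conditional law given $B(\sigma_2)=y$ equals $\mu_{x_1,y}/G(x_1,y)$ for any density $e(y)$.
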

\begin{proof}
    Note that the probability above is nothing but the ratio of Green's function, and in dimension three, there exist universal constants $C_1,C_2$ such that the Green's function $G_{W_1}(x_1,x_2) \ge C_1$ and $G_{W_2}(x_1,x_2)\le C_2$, we have that $\mbox{LHS of \eqref{eq:lem2.5conc}}\geq \frac{C_1}{C_2},$ concluding the proof.
\end{proof}
The next two lemmas are similar. They show that one can force a Brownian
motion, together with the BLS clusters it intersects, to stay in a cone. Let
$n(\epsilon)=\lceil \log_2(e/\epsilon) \rceil$. Write $n(\epsilon)=n$ for simplicity. The proof is based on a
multiscale analysis: the relevant range is divided into $n$ scales,
and at each scale there is a uniformly positive probability that the Brownian
motion, together with the BLS clusters it intersects, remains in the cone. This
yields a probability that decays only polynomially in $\epsilon$.
\begin{lemma}\label{lem:BM stays in a cone}
    There exist constants $c_1,c_2$ such that the following. For every $0<\epsilon<\frac{1}{2}$, define
    $$V^-=((u+A_{\frac{1}{20}})\cup \mathcal{B}_{\frac{\epsilon}{2}}(u))\cap \mathcal{B}_e, V=((u+A)\cup \mathcal{B}_{\epsilon}(u))\cap \mathcal{B}_e.$$
    Then if $x \in S_0,|x-u| \le \frac{\epsilon}{4}$,
\begin{equation}\label{eq:lem2.6conc}
    \Pb^{x}(B[0,T_1] \subseteq V^-) \ge c_1\epsilon^{c_2}.    
    \end{equation}
\end{lemma}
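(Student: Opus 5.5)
We prove \eqref{eq:lem2.6conc} by a multiscale argument along a dyadic sequence of radii around $u$, followed by one macroscopic step out to $S_1$. Here $A$ is the cone with apex at the origin and axis through $u$, and $A_{1/20}$ is the narrower cone of half-angle $1/20$. Without loss of generality we take $u=e_1$.

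\textbf{Multiscale set-up.} Let $\rho_j=2^{j}\epsilon/4$ for $j=0,1,\dots,n$, with $n=n(\epsilon)=\lceil\log_2(e/\epsilon)\rceil$, so that $\rho_n$ is of order $1$. Let $\tau_j$ be the first exit time of $B$ from $\mathcal B_{\rho_j}(u)$. The starting point $x$ lies in $\mathcal B_{\rho_0}(u)\subseteq \mathcal B_{\epsilon/2}(u)$, which is contained in $V^-$.

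\textbf{Good events at each scale.} The key geometric fact is that $(u+A_{1/20})\cap\big(\mathcal B_{\rho_{j+1}}(u)\setminus \mathcal B_{\rho_j/2}(u)\big)$ contains a truncated sub-cone around the axis direction $u$, whose opening is a fixed angle independent of $j$. We define $E_j$ as the event that:
\begin{itemize}
\item $B[\tau_j,\tau_{j+1}]$ stays in $\big(u+A_{1/20}\big)\cup \mathcal B_{\rho_j/2}(u)$, intersected with $\mathcal B_{\rho_{j+1}}(u)$;
\item $B(\tau_{j+1})$ lies within angle $1/40$ of the axis direction.
\end{itemize}
The one-step estimate we need is that, uniformly in $j$ and in the admissible position $B(\tau_j)$ (which lies in the thin sub-cone, or in the small ball when $j=0$), the conditional probability of $E_j$ is at least some $c>0$. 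This follows by Brownian scaling: after rescaling by $\rho_j$, the set-up is a fixed configuration, up to a compact family of starting points.

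There is one point to check here. In the first step the path starts near $u$ but may need to leave $\mathcal B_{\epsilon/2}(u)$ in a direction that sends it into the cone $u+A_{1/20}$, rather than back toward the origin. This is a fixed-geometry positive-probability event, so it causes no difficulty.

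\textbf{Chaining and the final step.} By the strong Markov property applied at the times $\tau_j$, we get
\[
\Pb^x\Big(\bigcap_{j<n}E_j\Big)\ge c^{n}\ge c_1\epsilon^{c_2},
\qquad c_2=\log_2(1/c).
\]
On this event, $B(\tau_n)$ lies in the cone at distance of order $1$ from $u$ and well inside $\mathcal B_e$. A final step then has probability bounded below by a constant: we move $B$ within the cone to $S_1$ before leaving $V^-$, which is possible since the set $u+A_{1/20}$ reaches $S_1$ inside $\mathcal B_e$. The Green's function argument of Lemma~\ref{lem:glueing} gives a clean version of this step. The lemma as stated concerns only the Brownian path, so no BLS input is needed; the cluster version would follow afterwards by additionally invoking Lemma~\ref{lem:cluster_small} at each scale with FKG.

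\textbf{Main obstacle.} The main difficulty is the uniformity of the one-step constant $c$ near scale $\rho_j\approx 1$. At these scales the cone $u+A_{1/20}$ is truncated by $\mathcal B_e$, so exact scale invariance fails. We handle this by treating the last $O(1)$ scales directly as a fixed compact configuration, which contributes only a constant factor.
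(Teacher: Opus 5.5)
Your argument is essentially the paper's proof: dyadic balls around $u$, then a per-scale lower bound that is uniform in $j$, then strong Markov chaining over $n\approx\log_2(e/\epsilon)$ scales to get $c^{n+O(1)}\ge c_1\epsilon^{c_2}$. The per-scale bound is uniform because the cone $u+A_{1/20}$ has apex $u$ and is therefore exactly invariant under scaling about $u$.

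\textbf{A slip in the definition of $E_j$.} The escape ball must have the fixed radius $\epsilon/2$ (or be dropped for $j\ge1$), not $\rho_j/2$. As written, the events fail in two places.
\begin{itemize}
\item For $j\ge3$ you have $\mathcal B_{\rho_j/2}(u)\supseteq\mathcal B_\epsilon(u)\not\subseteq V^-$. So $\bigcap_j E_j$ does not force $B[0,T_1]\subseteq V^-$.
\item For $j=0$, the point $B(\tau_0)\in\partial\mathcal B_{\epsilon/4}(u)$ need not lie in $(u+A_{1/20})\cup\mathcal B_{\epsilon/8}(u)$ at all. Then $E_0$ has probability zero, and there is no uniform lower bound.
\end{itemize}
The paper's first step is the correct version of what you describe in words: the path stays in $\mathcal B_{3\epsilon/8}(u)\cup(u+A_{1/40})$ until hitting $\partial\mathcal B_{\epsilon/2}(u)$. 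After that, each step stays in $u+A_{1/20}$ and exits in the sub-cone.

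\textbf{The ``main obstacle'' does not arise.} Since $\rho_n<e/2$ and $1+e/2<e$, every ball $\mathcal B_{\rho_{j+1}}(u)$ with $j<n$ lies inside $\mathcal B_e$. So the truncation by $\mathcal B_e$ only enters in the final macroscopic step, which is a single compact-configuration estimate.
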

\begin{proof}
     We use a multiscale analysis. Let $V'=u+A_{\frac{1}{40}}$. Consider the concentric balls
     $$\mathcal{B}_{2^{i-1}\epsilon}(u), \quad i=0,1,..., n+1.$$
     There exists a universal positive constant $C$ such that each of the following events has probability at least $C$:
     \begin{enumerate}
         \item A Brownian motion started from $x$ stays in $\mathcal{B}_{\frac{3}{8}\epsilon}(u) \cup V'$ until hitting $\partial \mathcal{B}_{\frac{1}{2}\epsilon}$.
         \item A Brownian motion started from any point of $V' \cap \partial \mathcal{B}_{2^{i-1}\epsilon}(u)$ stays in $V^{-}$ until hitting $\partial\mathcal{B}_{2^{i}\epsilon}(u)$, and the hitting point lies in $V'$.
     \end{enumerate}
     By the strong Markov property,
     $\Pb^{x}(B[0,T_1] \subseteq V^-) \ge C^{n+2}$. 
     Hence there exist $c_1,c_2>0$ such that \eqref{eq:lem2.6conc} holds.
\end{proof}
\begin{lemma}\label{lem:loop-intersects-cone-small}
Let $\alpha_*\in I_0$. Then there exist constants $c_1,c_2>0$, depending only on
$\alpha_*$, such that for every $\alpha\in(0,\alpha_*]$, every $0<\epsilon<\frac12$,
and every pair $V,V^-$ as in Lemma~\ref{lem:BM stays in a cone}, we have
\begin{equation}\label{eq:lem2.7conc}
    \Pb\bigl(\Lambda(V^-)\subseteq V\bigr)\ge c_1\epsilon^{c_2}.
\end{equation}
\end{lemma}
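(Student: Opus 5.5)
We prove \eqref{eq:lem2.7conc} by the same multiscale scheme as Lemma~\ref{lem:BM stays in a cone}, now applied to BLS clusters. Since the event $\{\Lambda(V^-)\subseteq V\}$ is decreasing in the loop configuration, its probability is decreasing in $\alpha$. It therefore suffices to treat $\alpha=\alpha_*$.

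Let $n=n(\epsilon)$. We cover $V^-$ by the pieces
\[
P_0=V^-\cap\mathcal B_{\epsilon/2}(u),\qquad P_i=V^-\cap\bigl(\mathcal B_{2^{i-1}\epsilon}(u)\setminus\mathcal B_{2^{i-2}\epsilon}(u)\bigr),\quad i=1,\dots,n+1 .
\]
For each $i$ we define a decreasing event $E_i$: every cluster of $\Lc^{\alpha_*}$ that intersects $P_i$ has diameter at most $c\,2^{i-1}\epsilon$. Here $c>0$ is a small universal constant chosen so that the $c2^{i-1}\epsilon$-neighbourhood of $P_i$ lies in $V$. This is possible because $V^-$ is built from the narrower cone $A_{1/20}$ and the smaller ball $\mathcal B_{\epsilon/2}(u)$. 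Consequently, the distance from a point of $P_i$ to $\partial V$ is comparable to $2^{i}\epsilon$, except near $\partial\mathcal B_e$, and near $\partial\mathcal B_e$ both sets are truncated in the same way. On $\bigcap_i E_i$, every cluster touching $V^-$ touches some $P_i$, so it stays in $V$, and hence $\Lambda(V^-)\subseteq V$.

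Next we bound each $\Pb(E_i)$ below uniformly. By scaling invariance of the BLS, $\Pb(E_i)$ equals the probability that all clusters meeting a set of unit size (a rescaled piece $P_i$) have diameter at most $c$. We cover the rescaled $P_i$ by a bounded number $M$ of unit spheres or small balls, with $M$ independent of $i$ and $\epsilon$. Lemma~\ref{lem:cluster_small 1} applied to each, combined through the FKG inequality (Lemma~\ref{lem:FKG}, as all events are decreasing), gives $\Pb(E_i)\ge C(\alpha_*,c)^{M}=:q>0$. Here we use the version of that lemma for clusters intersecting a sphere, scaled appropriately. A cluster meeting the solid region can be handled by covering it with finitely many spheres at different radii, since any cluster of diameter larger than $c$ meeting a small ball must cross an annulus around it. Applying FKG once more to the decreasing events $E_0,\dots,E_{n+1}$ gives
\[
\Pb(\Lambda(V^-)\subseteq V)\ge \prod_{i=0}^{n+1}\Pb(E_i)\ge q^{n+2}\ge c_1\epsilon^{c_2},
\]
since $n\le \log_2(e/\epsilon)+1$.

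The main obstacle is the uniformity of the covering step. We must check that a fixed number $M$ of translated and scaled copies of the configuration in Lemma~\ref{lem:cluster_small 1} suffices for every scale $i$, including the innermost ball and the outer truncation by $\mathcal B_e$. We must also check that the geometric gap between $V^-$ and $\partial V$ is uniformly proportional to the scale. Both follow from the wedge geometry (apertures $1/20$ versus that of $A$), but they require some care near the apex $u$ and near the outer sphere.
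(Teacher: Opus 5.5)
Your proposal is correct and follows essentially the same route as the paper. Both reduce to $\alpha=\alpha_*$ by monotonicity, impose at each of the $n+2=O(\log(1/\epsilon))$ dyadic scales around $u$ a scale-invariant event that clusters are small relative to that scale, bound each event below uniformly via Lemma~\ref{lem:cluster_small 1} and scaling, and combine the events with FKG.

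The differences are cosmetic. The paper conditions on clusters meeting the whole balls $\mathcal B_{2^{i-1}\epsilon}(u)$ instead of your shells $P_i$. Your sphere-to-solid-region covering step spells out what the paper leaves implicit. Like the paper, your handling of the truncation by $\mathcal B_e$ tacitly assumes the loops lie inside $\mathcal B_e$, as they do in the application with $\Lc_1\setminus\Lc_0$.
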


\begin{proof}
    By monotonicity, it suffices to prove the result for
    $\alpha=\alpha_*$. We use the same multiscale analysis as above. Let
    $E_i$ be the event that every cluster of $\Lc^{\alpha_*}$ intersecting
    $\mathcal{B}_{2^{i-1}\epsilon}(u)$ has diameter at most
    $\frac{2^{i-1}}{100}\epsilon$. If
    $\bigcap_{i=1}^{n+1} E_i$ occurs, then
    $\Lambda(V^-) \subseteq V$. By Lemma~\ref{lem:cluster_small 1}, there
    exists a constant $C>0$ such that $\Pb(E_i)\ge C$ for every $i$. The FKG
    inequality gives
    $$\Pb(\Lambda(V^-) \subseteq V) \ge \Pb(\bigcap\limits_{i=1}^{n+1} E_i) \ge C^{\log_2{\frac{e}{\epsilon}+2}}.$$
    Hence there exist $c_1,c_2>0$ such that \eqref{eq:lem2.7conc} holds.
\end{proof}
To handle the extra difficulty caused by the Brownian loop soup, we need the
following boundary separation estimate.

\begin{lemma}\label{lem:conditioned-exit-cap}
    There exists a universal constant \(c>0\) such that for any compact set
    \(K\subseteq \Db\setminus \overline{\Db(u)}\),
    \begin{equation}\label{eq:lem2.8}
        \Pb^0\left(
        \operatorname{dist}\bigl(B(T_0),\overline{\Db\setminus \Db(u)}\bigr)
        \ge \frac12
        \,\middle|\,
        B[0,T_0]\cap K=\emptyset
        \right)\ge c .
    \end{equation}
\end{lemma}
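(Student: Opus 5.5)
Throughout I read $u$ as a unit vector, so that $0\in\partial\Db(u)$. Then $K$ lies in the part of $\Db$ away from the ball $\Db(u)$, and the target set
\[
A:=\{y\in S_0:\operatorname{dist}(y,\overline{\Db\setminus\Db(u)})\ge \tfrac12\}
\]
is a spherical cap around $u$ of universal size. Let
\[
h(x):=\Pb^x\bigl(B[0,T_0]\cap K=\emptyset\bigr),\qquad g(x):=\Pb^x\bigl(B[0,T_0]\cap K=\emptyset,\ B(T_0)\in A\bigr).
\]
Both are harmonic in $\Db\setminus K$, and the claim is $g(0)\ge c\,h(0)$ with $c$ independent of $K$.

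\textbf{Step 1 (finishing from a good ball).} Let $B':=\mathcal B_{1/8}(\tfrac34 u)$, which is contained in $\Db\cap\Db(u)$ and hence disjoint from $K$. I will first show that from every $x\in B'$ the motion reaches $A$ before leaving the convex hull of $B'\cup A$, with probability at least a universal $c_0>0$. This hull lies inside $\Db(u)$, so it avoids $K$. This is a fixed-geometry estimate, in the same style as Lemma~\ref{lem:glueing}. By the strong Markov property at the hitting time $\tau(B')$, it gives
\[
g(0)\ge c_0\,\Pb^0\bigl(\tau(B')<T_0,\ B[0,\tau(B')]\cap K=\emptyset\bigr).
\]

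\textbf{Step 2 (reaching the good ball, the main obstacle).} It then suffices to show
\[
\Pb^0\bigl(\tau(B')<T_0,\ B[0,\tau(B')]\cap K=\emptyset\bigr)\ge c\,h(0).
\]
The difficulty is that $K$ may come arbitrarily close to $0$, since $0\in\partial\Db(u)$. For instance, $K$ could nearly surround $0$ on the side $\{x\cdot u<|x|^2/2\}$, leaving only thin corridors. What saves us is that the complementary side always contains a cone of fixed aperture with vertex $0$ and axis $u$, at every scale $\rho\in(0,1)$, since $\Db(u)\supseteq\{x:x\cdot u>|x|^2/2\}$.

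My approach is a multiscale comparison, in the spirit of the proofs of Lemmas~\ref{lem:BM stays in a cone} and~\ref{lem:loop-intersects-cone-small}. The estimate I aim for is
\[
\Pb^0\bigl(B \text{ exits } \mathcal B_{2^{-j}} \text{ avoiding } K \text{ through the cone part of } \partial\mathcal B_{2^{-j}}\bigr)\ge c\,\Pb^0\bigl(B \text{ exits } \mathcal B_{2^{-j}} \text{ avoiding } K\bigr),
\]
uniformly in $j$ and $K$. Once the motion is in the cone part at scale $2^{-j}$, it can be steered through $\Db(u)$ into $B'$ with uniformly positive probability. This steering uses a Harnack chain inside $\Db(u)$, which involves only a fixed number of steps once one is at macroscopic distance from $0$.

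To get the displayed comparison, I would try the following first. Paths leaving $\mathcal B_{2^{-j}}$ at a point $y$ outside the cone are reflected across the boundary of the half-space $H_u=\{x\cdot u>0\}$ after their last visit to $\mathcal B_{2^{-j-1}}$. Since $K\cap H_u\subseteq K\cap\Db\subseteq \Db\setminus\Db(u)$ is confined near $\partial H_u$ at small scales, the reflected piece is more likely to avoid $K$. This mirrors the monotonicity under reflection used in Lawler's separation arguments.

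If the reflection route fails, the fallback is a boundary-Harnack type inequality for the positive harmonic functions $h$ and $\Pb^{\cdot}(\text{hit the cone part first})$ in $\mathcal B_{2^{-j}}\setminus K$, using that the cone part is a fat, $K$-free piece of the boundary at every scale. I expect this comparison, uniform over arbitrary compact $K$, to be the genuinely delicate step. Steps 1 and 3 are routine.

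\textbf{Step 3 (combining).} Combining the comparison at the scale $2^{-j}\asymp 1$ (or iterating over scales if needed) with Step 1 gives $g(0)\ge c\,h(0)$ with a universal $c$. This is exactly \eqref{eq:lem2.8}.
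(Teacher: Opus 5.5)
\textbf{There is a genuine gap: Step 2 is not proved.}

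Step 1 is fine up to a cosmetic fix. The far side of $\partial B'$ lies on the boundary of the convex hull of $B'\cup A$, so from those points $B$ leaves the hull immediately. Use the hull of a slightly larger ball, such as $\mathcal B_{1/4}(\tfrac34 u)$, which is still inside $\Db(u)$.

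The problem is Step 2. The Harnack-chain steering into $B'$ is uniform only from a sphere of macroscopic radius: from distance $2^{-j}$ to $\partial\Db(u)$ it costs a factor of order $2^{-j}$. So what you actually need is
\[
\Pb^0\bigl(B\text{ exits }\mathcal B_{1/2}\text{ through the cone part, avoiding }K\bigr)\ge c\,\Pb^0\bigl(B\text{ exits }\mathcal B_{1/2}\text{ avoiding }K\bigr),
\]
uniformly in $K$. This is essentially the lemma itself restated on a smaller ball, and the proposal leaves it unproved. Neither suggested tool supplies it.
\begin{itemize}
\item The reflection route fails on three counts. The last visit to $\mathcal B_{2^{-j-1}}$ is not a stopping time, and $B$ is not on $\partial H_u$ at that time, so reflecting from then on neither preserves the law nor gives a continuous path. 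Reflection across $\partial H_u$ sends exit points near the plane to points near the plane, not into a cone of fixed aperture. Most importantly, an arbitrary compact $K$ has no reflection monotonicity: its part in the sliver $\{0<x\cdot u<|x|^2/2\}$ is unrelated to the mirror image of its part in $\{x\cdot u<0\}$, so the reflected piece can hit $K$ when the original did not.
\item The boundary Harnack principle needs regularity of the domain near the comparison point, and $\mathcal B_{2^{-j}}\setminus K$ has none for general compact $K$.
\item Iterating a one-scale comparison loses a constant per scale, while $\operatorname{dist}(0,K)$ can be arbitrarily small, so the number of scales is unbounded.
\end{itemize}

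\textbf{The missing idea: a harmonic function dominated by its value at the origin.} The paper's argument rests on a single harmonic function that, on the whole region where $K$ may lie, is at most its value at the starting point. Take $M_\delta=\mathcal B_\delta(u)\cap S_0$, which is contained in your target $A$ for $\delta\le\frac12$, and set $\varphi(z)=\Pb^z(B(T_0)\in M_\delta)$; this is the paper's $h$, not yours.
\begin{itemize}
\item On $\partial\Db(u)\cap\Db$ one has $|z|^2=2z\cdot u$, hence $|z-y|^2=1-2z\cdot(y-u)$ for $y\in S_0$. Averaging the Poisson kernel over the cap, and using that the average of $y-u$ is a nonpositive multiple of $u$, gives $\varphi(z)/\varphi(0)\le (1-|z|^2)(1+C\delta^2|z|^2)\le 1$ for $\delta$ small.
\item Since $\varphi=0$ on $S_0\setminus\Db(u)$, the maximum principle yields $\varphi\le\varphi(0)$ on $\Db\setminus\Db(u)\supseteq K$.
\item The strong Markov property at $\tau(K)\wedge T_0$ then gives
\[
\Pb^0\bigl(B(T_0)\in M_\delta,\ B[0,T_0]\cap K=\emptyset\bigr)=\varphi(0)-\mathbb E^0\bigl[\varphi(B(\tau(K)));\,\tau(K)<T_0\bigr]\ge \varphi(0)\,\Pb^0\bigl(B[0,T_0]\cap K=\emptyset\bigr).
\]
\end{itemize}
So the conditional probability is at least $\varphi(0)=|M_\delta|/(4\pi)$, uniformly in $K$. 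No multiscale analysis, reflection, or boundary Harnack input is needed. The geometric fact you correctly isolated, that $0$ lies on the sphere $\partial\Db(u)$ separating $K$ from the cap, enters only through this maximum-principle comparison. The paper also records an auxiliary bound for $h$ near $S_0$, but the domination $h\le h(0)$ on $\Db\setminus\Db(u)$ already closes the argument as shown.
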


\begin{proof}
This is the continuum analogue of \cite[Claim~3.4]{SS18}. The proof
there uses only comparison estimates for harmonic functions and the boundary
Harnack principle, and therefore applies in the present setting. For the convenience of the reader, we briefly sketch the proof.

Choose \(\delta>0\) small enough so that
\[
    M_\delta:=\mathcal B_\delta(u)\cap S_0
    \subseteq
    \left\{
    z\in S_0:
    \operatorname{dist}\bigl(z,\overline{\Db\setminus\Db(u)}\bigr)
    \ge \frac12
    \right\}.
\]
It suffices to prove a uniform lower bound for the conditional probability of
exiting through \(M_\delta\). Let
\[
    h(z)=\Pb^z(B(T_0)\in M_\delta), \qquad z\in\Db .
\]
We use the harmonic-measure estimates from \cite[Claim~3.4]{SS18}. The proof
there gives, in the continuum setting as well, a radius \(\rho\in(0,1)\)
depending only on \(\delta\), such that
\[
    h(z)\le h(0)
    \quad\text{for }z\in \Db\setminus\Db(u),
\]
and
\[
    h(z)\le \frac14 h(0)
    \quad\text{whenever } \rho\le |z|<1
    \text{ and } |z-u|\ge \frac12 .
\]
Here \(\rho\) is only an auxiliary radius: one stops the Brownian
motion on \(\partial\mathcal B_\rho\) before it exits \(\Db\), and the second
estimate says that, from points on this stopping surface which are not directed
towards the cap near \(u\), the harmonic measure of \(M_\delta\) is uniformly
smaller than from the origin.

Applying the same optional-stopping and strong Markov argument as in
\cite[Claim~3.4]{SS18} to the harmonic function \(h\), with the obstacle
\(K\subseteq\Db\setminus\overline{\Db(u)}\), yields
\[
    \Pb^0\bigl(B(T_0)\in M_\delta
    \mid B[0,T_0]\cap K=\emptyset\bigr)\ge c
\]
for a universal constant \(c>0\). Since \(M_\delta\) is contained in the target
set in \eqref{eq:lem2.8}, the lemma follows.
\end{proof}

We also refer to \cite[Appendix A]{lawler2020infinite} for a unified approach to such estimates in any dimension.

\section{Up-to-constants estimate}
In this section, we prove the up-to-constants estimate for the
generalized intersection exponent $\xi_{\alpha}(k,\lambda)$. For simplicity,
we first prove Theorem~\ref{thm:up-to-constant} in the case $k=1$. A crucial
ingredient is the separation lemma proved in the next subsection.
\subsection{Separation lemma for 3D BLS}\label{subsec:sl}
In this subsection, we prove a separation lemma for two Brownian motions
in the presence of a BLS. Informally, such a lemma says that, conditioned on
two random sets being disjoint, there is a uniformly positive probability that
they are well separated. An important ingredient is the notion of quality,
which measures the conditional probability that one Brownian motion crosses an
annulus without intersecting the other object.

We now describe the rescaled setup used in the separation argument.
Recall that \(S_r\) is the sphere of radius \(e^r\), and that
\(\mathcal B_r(x)\) is the ball of radius \(r\) centered at \(x\). Let
\(u=(1,0,0)\) and
\[
    A_\delta=\{x:x/|x|\in \mathcal B_\delta(u)\}.
\]
Let \(P\subseteq\mathcal B_1\) be a given set satisfying
\(S_0\cap P\ne\emptyset\), and let \(z\in S_0\cap P\). We call
\((P,z)\) an initial configuration, where \(z\) is the starting
point of \(B^1\). Let \(B^1\) be a Brownian motion started from \(z\). The
additional Brownian motion \(B\) is started from a point chosen uniformly on
\(S_{-\rho}\), for some \(\rho>0\). Here \(\rho\) is an auxiliary parameter
coming from the rescaling of the past Brownian trajectory in the multiscale
argument, and all estimates below are uniform in \(\rho\). Throughout, we only consider configurations for which the conditioning event
\(\{\overline B_0\cap\Lambda_0=\emptyset\}\) has positive probability. Let
\(\Gamma_r\) be the union of \(P\) and \(\overline{B^1_r}\), and let
\(\Lambda_r\) be the union of \(\Gamma_r\) and all clusters in
\(\Lc_r\setminus\Lc_0\) that intersect it.

Let $A= A_\frac{1}{10}$. We study the non-intersection probability at
scale $r$, conditioned on non-intersection at scale $0$. Define
$X_r= \Pb(\overline{B_r} \cap \Lambda_r = \emptyset \mid \overline{B_0} \cap \Lambda_0 = \emptyset ,\mathcal{F}_r)$.
Define the separation event
$$U(s,r)= B^1[T^1_{s},T_{r}^1] \subseteq -A.$$
For $r<s$, we also consider the separated non-intersection probability
$$ \overline{X}_{r,s}= \Pb((\overline{B_s} \cap \Lambda_s = \emptyset) \cap (B[T_r,T_s] \subseteq A) \mid \overline{B_0} \cap \Lambda_0 = \emptyset ,\mathcal{F}_r).$$
\begin{proposition}[Separation Lemma]\label{separation lemma}
    Fix $\alpha_*\in I_0$, $0<\lambda_0<\lambda_1$. For any $r\ge1$, any intensity $\alpha \in (0,\alpha_*]$, any $\lambda \in [\lambda_0,\lambda_1]$ and any initial configuration, there exists a constant $c=c(\lambda_0,\lambda_1,\alpha_*)$ such that
    $$\mathbb{E}\left[\overline{X}_{r-\frac{1}{2},r}^{\lambda}\mathbf{1}_{U(r-\frac{1}{2},r)}\right] \ge c \mathbb{E}[X_r^{\lambda}].$$
\end{proposition}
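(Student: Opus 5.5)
We follow Lawler's scheme from \cite{Lawler98}, organized around a notion of quality, and add control of the BLS clusters. The basic move is this: whenever the configuration at some scale is reasonably nice, we can with probability bounded below (uniformly in $\alpha\le\alpha_*$ and in the configuration) extend it over half a scale so that it becomes separated. The extension forces three things. First, $B^1$ continues inside $-A$, using Lemma~\ref{lem:BM stays in a cone} and Lemma~\ref{lem:glueing}. Second, the new clusters of $\Lc_r\setminus\Lc_{r-\frac12}$ that it touches stay inside a slightly larger cone around $-A$, using Lemma~\ref{lem:loop-intersects-cone-small} and FKG (Lemma~\ref{lem:FKG}). Third, $B$ continues inside $A$. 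The multiscale lemmas cost only polynomial factors $\epsilon^{c_2}$, where $\epsilon$ is the distance at which the two objects are separated at the current scale. To control $\epsilon$ we introduce the quality
\[
Q_s=\sup\bigl\{\epsilon: \text{the endpoints } B(T_s),\,B^1(T^1_s) \text{ are at distance }\ge \epsilon e^{s}\text{ from } \Lambda_s \text{ resp. } \overline{B_s},\ \text{and } \Lambda(\mathcal B_{\epsilon e^s}(B^1(T^1_s)))\text{ has no cluster of diameter} >\epsilon^2 e^s\bigr\}.
\]
In this notation the claim reads as follows: at a good scale with $Q_s\ge\epsilon$, the separated quantity is at least $c\epsilon^{c'}$ times the unseparated one. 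Lemma~\ref{lem:conditioned-exit-cap} is used to push the endpoint of $B$, conditioned on avoiding $\Lambda$, toward the cap near $u$ with conditional probability at least $c$. The cost of this is only a constant factor in $X^\lambda$, since conditioning on avoidance is exactly how $X$ updates. We use $\lambda\in[\lambda_0,\lambda_1]$ when converting probability lower bounds into bounds on $\lambda$-th moments via Jensen/Hölder, so that the constants are uniform in $\lambda$.

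The core step is a dyadic-quality argument. Fix $r\ge1$, let $s_j=r-\frac12-2^{-j}\cdot\frac12$ for $j\ge1$, and let $J$ be the first $j$, counting backward from small scale differences, with $Q_{s_j}\ge 2^{-j}$, or rather the largest good index. The decomposition has two parts.

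\begin{enumerate}[(i)]
\item Bad quality is expensive. If $Q_{s}<\epsilon$, then the conditional probability of non-intersection over the next $\epsilon^{1/2}$-fraction of scale is at most $\exp(-c\,\epsilon^{-\beta})$ for some $\beta>0$. Either $B$ starts within $\epsilon e^s$ of $\Lambda_s$, so by Beurling-type/hitting estimates in $\mathbb R^3$ it hits with probability close to $1$ on many sub-scales (a path of $B^1$ in 3D is hit by a nearby BM with uniformly positive probability at each of $\sim\log(1/\epsilon)$ dyadic scales). Or $B^1$ starts near $\overline{B_s}$, with the same conclusion. Or there is a large cluster near the tip, which only enlarges $\Lambda$ and helps. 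Hence
\[
\mathbb E[X_{s_{j+1}}^\lambda;Q_{s_j}<2^{-j}]\le e^{-c2^{j\beta}}\mathbb E[X_{s_j}^\lambda].
\]
\item Good quality separates cheaply. If $Q_{s_j}\ge2^{-j}$, the extension gives
\[
\mathbb E[\overline X^\lambda\mathbf 1_U]\ge c2^{-jc'}\mathbb E[X_{s_j}^\lambda\mathbf 1_{Q_{s_j}\ge2^{-j}}].
\]
\end{enumerate}

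The trivial monotonicity $X_r\le X_{s_j}$ then combines the two. Summing over $j$, the super-exponentially small factors beat the polynomial costs provided one has the a priori comparison $\mathbb E[X_{s_j}^\lambda]\le C2^{jc''}\mathbb E[X_{r-\frac12}^\lambda]$. This comparison comes from the easy lower bound that going from scale $s_j$ to $r-\frac12$ loses at most a polynomial factor in $2^{j}$, which follows from the same cone lemmas. Following Lawler, we run an induction/maximality argument: we take the ratio $\sup\mathbb E[X^\lambda]/\mathbb E[\overline X^\lambda\mathbf 1_U]$ over configurations and scales and show it cannot be large. This handles the circularity.

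The main obstacle is step (i) in the presence of the loop soup, since quality must be defined so that it is monotone-friendly with respect to loops. Loops of $\Lc_{s}\setminus\Lc_0$ may be large and straddle the current sphere. Future clusters may then attach to old parts of $\Lambda$, so the configuration is not Markov in $(\overline B_s,\Lambda_s)$ alone. I would deal with this by splitting loops according to whether they are contained in $\mathcal B_{e^{s}}$. Those that are not are treated as part of the future soup, and they only enlarge $\Lambda$, so they work against non-intersection; by FKG this is harmless for the lower bounds, and for the upper bound in (i) it only helps. The one place where it really matters is in guaranteeing that the newly attached clusters stay in the cone. Here Lemma~\ref{lem:cluster_small 1}, applied at every dyadic scale around the tip together with FKG, has to be made uniform for loops crossing $S_s$, and I expect most of the technical work to go there.
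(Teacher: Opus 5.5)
There is a genuine gap in step (i), and it is the step that is supposed to beat the polynomial loss in (ii). A single bad-quality observation does not make further non-intersection super-polynomially unlikely. In fact (i) contradicts your own (ii): a configuration with $\epsilon/4\le Q_s<\epsilon$ counts as bad at level $\epsilon$. Yet the extension in (ii) at level $\epsilon/4$ gives it non-intersection probability at least $c\epsilon^{c'}$ over a whole half-scale, and hence also over an $\epsilon^{1/2}$-fraction of a scale. Your heuristic (a uniformly positive hitting chance at each of $\sim\log(1/\epsilon)$ dyadic scales) also yields only $(1-c)^{\log(1/\epsilon)}=\epsilon^{c_0}$. Moreover, in your display the stretch $s_{j+1}-s_j=2^{-j-2}$ has length of order $\epsilon$, not $\epsilon^{1/2}$, and over such a stretch nothing better than a constant factor is available. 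So the bad event decays only polynomially, with an exponent unrelated to the exponent $c'$ in (ii), and the sum over $j$ does not close.

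The missing idea is Lawler's micro-scale device, which the paper uses in Lemma~\ref{separate at one scale}:
\begin{enumerate}[(a)]
\item Quality level $2^{-n}$ is given $n^2$ micro-steps of length $2^{-n}$, so the total time is $n^2 2^{-n}$, which is summable in $n$.
\item Each micro-step at which the quality is still bad costs a factor $1-c_2$ in the $\lambda$-moment, by \eqref{eq:intersect with positive probability}. This comes from comparison with two nearby Brownian motions together with $\mathbb{E}[1-X^\lambda]\ge\min\{\lambda_0,1\}\mathbb{E}[1-X]$.
\item Hence the event that the quality stays bad for all $n^2$ micro-steps has weight at most $e^{-\beta_1 n^2}$.
\end{enumerate}
It is this quadratic number of attempts, not any single bad observation, that beats $2^{n(\beta\lambda+\beta_0)}$.

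Two further points need repair.

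\emph{Measurability of the quality.} Your $Q_s$ depends on $B(T_s)$, which is integrated out in $X_s$. So $\mathbf{1}_{Q_{s_j}\ge 2^{-j}}$ is not $\mathcal F_{s_j}$-measurable, and $\mathbb{E}[X_{s_j}^\lambda\mathbf{1}_{Q_{s_j}\ge 2^{-j}}]$ is not meaningful as written. The quality has to be a conditional probability given $\mathcal F_s$ and the non-intersection event, as in \eqref{eq:quality}. Lemma~\ref{decay rate} then extracts a good tip location by pigeonholing over $O(\epsilon^{-2})$ caps.

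\emph{The a priori comparison.} The bound $\mathbb{E}[X_{s_j}^\lambda]\le C2^{jc''}\mathbb{E}[X_{r-\frac12}^\lambda]$ does not follow from the cone lemmas, which apply only to configurations of good quality, and the ``induction/maximality'' step is left unspecified. The paper avoids needing this comparison at all:
\begin{enumerate}[(a)]
\item It takes the infimum $r_n$ only over configurations with $Q^{n+2}\ge\frac12$, so Lemma~\ref{decay rate} bounds the denominator below by $c\,2^{-n(\beta\lambda+\beta_0)}$ outright.
\item It then proves $r_n\ge(1-\epsilon_n)r_{n-1}$ with $\sum_n\epsilon_n<\infty$.
\item Finer quality levels receive larger time budgets $a_n\le\frac12$. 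Your $s_j$ do the opposite, leaving less time before $r-\frac12$ at finer levels.
\end{enumerate}

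Your handling of the loop soup (cone lemmas, FKG, Lemma~\ref{lem:conditioned-exit-cap}) is in line with the paper's Lemma~\ref{decay rate}. That lemma does not need a cluster-size condition inside the quality.
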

We follow the strategy of \cite{Lawler98}. Define
$$\delta_r = e^{-r} \min\{{\rm dist}(B(T_r),\overline{B^1_r} ), {\rm dist}(B^1(T_r^1), \overline{B_r})\}.$$
We define the {\it quality} of non-intersection by
\begin{equation}\label{eq:quality}
    Q^{r}_{\epsilon}= \Pb(\delta_r \ge \epsilon \mid \overline{B_r} \cap \Lambda_r = \emptyset, \mathcal{F}_r),
\end{equation}
and let $Q_{\epsilon}=Q_{\epsilon}^{0}$. The proof of Proposition~\ref{separation lemma} is reduced to the following two lemmas.
\begin{lemma}\label{decay rate}
     Fix $\alpha_*\in I_0$, $\lambda_1>0$. There exist  constants $c_1$,$\beta$,$\beta_0$ depending only on $\alpha_*, \lambda_1$ such that for  any intensity $\alpha \in (0,\alpha_*]$, any $\lambda \in [0,\lambda_1]$ and any initial configuration satisfying $Q_{\epsilon}\ge \epsilon$, then
    $$\mathbb{E}[\overline{X}^{\lambda}_{\frac{1}{4},1}\mathbf{1}_{U(\frac{1}{4},1)}]\ge c_1 \epsilon^{\beta \lambda+\beta_0 }.$$
\end{lemma}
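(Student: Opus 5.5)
Starting from a well-separated configuration at scale $0$ (which $Q_\epsilon\ge\epsilon$ provides with conditional probability at least $\epsilon$), we will build, with polynomial cost in $\epsilon$, a scenario in which $B$ escapes into the cone $A$ and $B^1$ into $-A$. Everything is then kept separated up to scale $1$, including the BLS clusters attached to $B^1$. We then show that on this scenario $\overline X_{1/4,1}$ is at least a power of $\epsilon$, and raise that bound to the power $\lambda\le\lambda_1$.

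\textbf{Step 1 (good starting configuration).} By the definition \eqref{eq:quality} of $Q_\epsilon$ and the hypothesis $Q_\epsilon\ge\epsilon$, conditioned on $\{\overline B_0\cap\Lambda_0=\emptyset\}$ and $\mathcal F_0$, we have with probability at least $\epsilon$ that $B(T_0)$ is at distance $\ge\epsilon$ from $\overline{B^1_0}$. The same holds for $B^1(T_0^1)=z$ relative to $\overline B_0$.

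By rotation we may assume $B(T_0)$ is near $u$, at least up to a bounded number of chaining steps. First we use Lemma~\ref{lem:conditioned-exit-cap}, rescaled to the ball of radius $\epsilon$ around the exit point, to push $B$ a little outward while avoiding the obstacle. We then use Lemma~\ref{lem:BM stays in a cone}, with $u$ replaced by $B(T_0)/|B(T_0)|$, to force $B$ from $S_0$ to $S_1$ inside a thin cone $V^-$ emanating from its exit point. This costs $c\epsilon^{c_2}$.

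\textbf{Step 2 (moving $B^1$ and its clusters).} Independently, $B^1$ started from $z$ is forced by Lemma~\ref{lem:BM stays in a cone} to stay inside a thin cone-tube around $z$ out to $S_{1/4}$. It then travels along a tube, at distance of order $1$ from $B$'s cone, to the antipodal region, and finally stays in $-A_{1/20}$ from $S_{1/4}$ to $S_1$. Lemma~\ref{lem:glueing} is used to chain these pieces with constant cost. The total cost of this step is $c\epsilon^{c_2}$.

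Simultaneously, Lemma~\ref{lem:loop-intersects-cone-small} (for each of the two tubes) together with Lemma~\ref{lem:cluster_small} forces every cluster of $\Lc_1\setminus\Lc_0$ hitting the tube of $B^1$ to stay inside the slightly larger tube $V$. This again costs $c\epsilon^{c_2}$. This step also handles clusters that touch $P$ but reenter the new region: we require that no cluster of $\Lc_1\setminus\Lc_0$ connecting $P$ to $B$'s cone appears. This is a decreasing event, so FKG (Lemma~\ref{lem:FKG}) combines it with the other loop-soup events, which are also decreasing.

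Since the loop soup is independent of the two Brownian motions, these probabilities multiply. On the combined event $G$ we then have $\overline{B_1}\cap\Lambda_1=\emptyset$, $B[T_{1/4},T_1]\subseteq A$ and $U(\tfrac14,1)$.

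\textbf{Step 3 (from probability to the $\lambda$-moment).} We have $\overline X_{1/4,1}\ge \Pb(G_B\mid \overline B_0\cap\Lambda_0=\emptyset,\mathcal F_1)$, where $G_B$ is the $B$-part of $G$. The configuration-dependent part of this conditional probability is at least $\epsilon\cdot c\epsilon^{c_2}$ on the $\mathcal F_1$-measurable event $G_1$, the $(B^1,\Lc)$-part of $G$, which has probability $\ge c\epsilon^{2c_2}$. Hence
\[
\mathbb E\bigl[\overline X^{\lambda}_{1/4,1}\mathbf 1_{U(1/4,1)}\bigr]\ge \Pb(G_1)\,(c\epsilon^{1+c_2})^{\lambda}\ge c_1\epsilon^{\beta\lambda+\beta_0},
\]
with $\beta=1+c_2$ and $\beta_0=2c_2$, after adjusting $c_1$ using $\lambda\le\lambda_1$. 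All constants depend only on $\alpha_*$, because of the monotonicity in $\alpha$ used in Lemmas~\ref{lem:cluster_small}–\ref{lem:loop-intersects-cone-small}.

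\textbf{Main obstacle.} The delicate point is the interaction near scale $0$. The point $z$ lies on $P$, whose already-attached clusters are fixed, while the new loops in $\Lc_1\setminus\Lc_0$ can join $P$, or the early part of $B^1$, to the $\epsilon$-neighbourhood of $B(T_0)$. The separation $\delta_0\ge\epsilon$ is only Euclidean, so I must show that new loops near $B$'s exit point are all of size $\ll\epsilon$ at scale $\epsilon$, with cost polynomial in $\epsilon$. I would handle this with the dyadic-scale argument of Lemma~\ref{lem:loop-intersects-cone-small} centered at $B(T_0)$, requiring at each scale $2^i\epsilon$ that all clusters are small relative to that scale, and combining the scales by FKG.
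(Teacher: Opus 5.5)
\paragraph{The gap: $B$-dependent events are treated as $\mathcal F_1$-measurable.}
Your Step~3 does not go through as written. In Steps~1--2 you centre $B$'s cone at its own exit direction $B(T_0)/|B(T_0)|$. Everything you then impose on $(B^1,\Lc)$ is defined relative to that random cone: the tube of $B^1$ ``at distance of order $1$ from $B$'s cone'', the event that no cluster of $\Lc_1\setminus\Lc_0$ joins $P$ to that cone, and the dyadic cluster control centred at $B(T_0)$.

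However, $\overline{X}_{1/4,1}$ is a conditional probability given $\mathcal F_1$, which contains $B^1$ and the loop soup but not $B$, so the exit point is integrated out inside it. Hence your $G_1$ is not $\mathcal F_1$-measurable, and ``on $G_1$, $\Pb(G_B\mid\cdot)\ge \epsilon\cdot c\epsilon^{c_2}$'' has no meaning. No single $\mathcal F_1$-measurable event can keep $B^1$ and its clusters away from every possible cone of $B$, because $B(T_0)$ can be anywhere on $S_0$ outside an $\epsilon$-neighbourhood of $z$.

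``By rotation we may assume $B(T_0)$ is near $u$'' does not fix this, since one cannot rotate to fix a random point. In addition, a cone emanating from an arbitrary exit direction does not lie in $A$, so the requirement $B[T_{1/4},T_1]\subseteq A$ also needs a deterministic target.

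\paragraph{How the paper avoids it.}
The paper begins with a pigeonhole localization. Cover $S_0$ by $O(\epsilon^{-2})$ balls of radius $\epsilon/16$. Since $Q_\epsilon\ge\epsilon$, some deterministic centre $x$ satisfies $\Pb(\delta_0\ge\epsilon,\ B(T_0)\in\mathcal B_{\epsilon/16}(x)\mid \overline{B_0}\cap\Lambda_0=\emptyset)\ge c\epsilon^{3}$, and $x$ is then rotated to $u$.

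With this, $V^-\subseteq V$ are deterministic. The events $E_2=\{\Lambda(V^-)\subseteq V\}$ (Lemma~\ref{lem:loop-intersects-cone-small}) and $E_3=\{B^1[T^1_0,T^1_1]\cap V=\emptyset\}\cap U(\frac14,1)$ are genuinely $\mathcal F_1$-measurable and independent of $B$. The extra factor $\epsilon^{2\lambda}$ is absorbed into $\beta$.

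Note also that the paper places the cluster control around $B$'s cone, not around a tube for $B^1$. On $E_2\cap E_3$, every cluster meeting $V^-$ stays in $V$, which $B^1$ avoids, so no tube or cluster control around $B^1$ is needed.

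\paragraph{An alternative fix that keeps your random cones.}
You could keep exit-point-dependent cones and tubes if you replace Step~3 by a bound on a joint probability. Since $\overline{X}_{1/4,1}\in[0,1]$, Jensen's inequality (for $\lambda\ge1$) and $x^\lambda\ge x$ (for $\lambda\le 1$) give
$\mathbb{E}\bigl[\overline{X}_{1/4,1}^{\lambda}\mathbf{1}_{U(1/4,1)}\bigr]\ge \mathbb{E}\bigl[\overline{X}_{1/4,1}\mathbf{1}_{U(1/4,1)}\bigr]^{\max\{\lambda,1\}}$.
The right-hand side is the probability of the full event given $\{\overline{B_0}\cap\Lambda_0=\emptyset\}$. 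In that expression it is legitimate to condition first on $B(T_0)$ and to choose tubes depending on it. A uniform bound $c\epsilon^{K}$ then yields the claim with $\beta=\beta_0=K$.
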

\begin{lemma}\label{separate at one scale}
    Fix $\alpha_*\in I_0$, $0<\lambda_0<\lambda_1$. There exists a constant $c$ depending only on $\alpha_*,\lambda_0,\lambda_1$ such that for any initial configuration, any intensity $\alpha \in (0,\alpha_*]$, any $\lambda \in [\lambda_0,\lambda_1]$, we have
    $$\mathbb{E}[\overline{X}^{\lambda}_{\frac{1}{2},1} \mathbf{1}_{U(\frac{1}{2},1)}] \ge c \mathbb{E}[X_1^\lambda].$$
\end{lemma}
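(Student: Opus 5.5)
The plan is to follow Lawler's one-scale argument from \cite{Lawler98}, splitting according to the quality of the configuration at an intermediate scale. Fix a small $\epsilon>0$, to be chosen depending only on $\alpha_*,\lambda_0,\lambda_1$, and look at scale $s=\frac12$.

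\textbf{Good quality at scale $\frac12$.} Suppose first that the configuration at radius $e^{1/2}$ is of good quality, meaning $Q^{1/2}_\epsilon\ge\epsilon$. Rescaling by $e^{1/2}$ maps this to an initial configuration for Lemma~\ref{decay rate}, with $(P,z)$ replaced by $(\Lambda_{1/2},B^1(T^1_{1/2}))$. The rescaled Lemma~\ref{decay rate} then gives a conditional lower bound for the separated quantity of order $\epsilon^{\beta\lambda+\beta_0}$. Because the parameters are only shifted by $1/2$, the scales need a small adjustment: $\frac14$ and $1$ become $\frac34$ and $\frac32$. So one would apply the lemma between $\frac12$ and $1$ with a slightly modified cone, or simply restate it with those scales; the proof is the same multiscale construction from Lemmas~\ref{lem:BM stays in a cone}, \ref{lem:loop-intersects-cone-small} and \ref{lem:glueing}.

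On this event, $\overline X_{1/2,1}\ge X_{1/2}\cdot(\text{conditional separated probability})$, and $X_1\le X_{1/2}$. Combined with the bound above, this gives
\[
\mathbb E\bigl[\overline X^\lambda_{1/2,1}\mathbf 1_{U}\mathbf 1_{\mathrm{good}}\bigr]\ge c\,\epsilon^{\beta\lambda_1+\beta_0}\,\mathbb E\bigl[X_{1/2}^\lambda\mathbf 1_{\mathrm{good}}\bigr]\ge c(\epsilon)\,\mathbb E\bigl[X_1^\lambda\mathbf 1_{\mathrm{good}}\bigr].
\]
Here the Markov property in $\mathcal F_{1/2}$ is used, together with independence of the BLS in disjoint annuli (loops in $\Lc_1\setminus\Lc_{1/2}$ are independent of everything before).

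\textbf{Bad quality at scale $\frac12$.} When $Q^{1/2}_\epsilon<\epsilon$, the aim is to show that $X_1^\lambda$ is small compared with $X_{1/2}^\lambda$, say $\mathbb E[X_1^\lambda\mid\mathcal F_{1/2}]\le \eta(\epsilon)X_{1/2}^\lambda$ with $\eta(\epsilon)\to0$. Conditioned on non-intersection, with probability at least $1-\epsilon$ the endpoints $B(T_{1/2})$ and $B^1(T^1_{1/2})$ are within $\epsilon e^{1/2}$ of the other path. A Beurling-type estimate in $d=3$ then says that a Brownian motion started within distance $\epsilon$ of a path crossing to distance $1$ avoids it with probability at most $\epsilon^{c}$. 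Unlike dimension two, the 3D Beurling estimate fails for general sets, so the key point is that the obstacle is itself a Brownian path. One would use that a 3D Brownian path near a point has positive capacity at every scale with high probability, giving decay like $\epsilon^{c}$ after averaging over $B^1$, with $\lambda\ge\lambda_0$ providing the needed convexity. Adding the BLS clusters only makes the obstacle bigger, so the estimate is monotone in $\alpha$ and uniform.

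\textbf{Iteration.} Lawler's iteration over the scales $\frac12,\frac12+\frac1{2m},\dots$ removes the dependence on bad configurations. One writes $\mathbb E[X_1^\lambda]\le\mathbb E[X_1^\lambda\mathbf 1_{\mathrm{good}}]+\eta\,\mathbb E[X_{1/2}^\lambda]$. The remaining step is to show $\mathbb E[X_{1/2}^\lambda]\le C\,\mathbb E[X_1^\lambda]$ uniformly in the configuration, which amounts to a one-step lower bound. This would come from a "bad quality improves geometrically" argument: $Q$ at $\epsilon^{j}$ scales with $\epsilon$-summable losses, as in \cite{Lawler98}, and choosing $\epsilon$ small absorbs the error.

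\textbf{Main obstacle.} I expect the hardest part to be the bad-quality step, that is, the Beurling-type smallness uniform in the arbitrary initial configuration $P$ and in $\alpha$. The BLS clusters attached to $B^1$ destroy the symmetry between $B$ and $B^1$, so one must control the two endpoints differently. For $B$ near $\Lambda$, the bound comes from $B$ needing to escape near $B^1$. For $B^1$ near $B$, one needs that $B^1$ together with the clusters it hits avoids $B$. This uses only $B^1\subseteq\Lambda$, so monotonicity again suffices.
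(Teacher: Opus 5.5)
Your proposal has a genuine gap: the step you call ``remaining'' carries the entire content of the lemma. After splitting at scale $\frac12$ with a fixed $\epsilon$, you need $\mathbb E[X_{1/2}^\lambda]\le C\,\mathbb E[X_1^\lambda]$ uniformly over initial configurations $(P,z)$. This is not an independent one-step estimate. It says that, under the $X_{1/2}^\lambda$-weighting, the configuration at scale $\frac12$ is typically good enough that surviving to scale $1$ costs only a constant factor. That is essentially a corollary of the separation lemma itself: separate before scale $\frac12$, then extend the separated paths to scale $1$ at constant cost. Without it, your bad-quality term is bounded only by $\eta(\epsilon)\,\mathbb E[X_{1/2}^\lambda]$, and nothing controls the ratio $\mathbb E[X_{1/2}^\lambda]/\mathbb E[X_1^\lambda]$ uniformly. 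Iterating a fixed-$\epsilon$ split over sub-scales does not help either. The ``always bad'' residual is then small only in absolute terms, while $\mathbb E[X_1^\lambda]$ has no uniform lower bound when the initial quality is much worse than $\epsilon$.

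The missing idea is Lawler's scale-dependent induction, which the paper implements.
\begin{itemize}
\item Let $r_n$ be the infimum of $\mathbb E[\overline X_{a_n,1}^\lambda\mathbf 1_{U(a_n,1)}]/\mathbb E[X_{a_n}^\lambda]$ over configurations with $Q_{2^{-n-2}}\ge\frac12$.
\item Run $n^2$ micro-steps of length $2^{-n}$. The total length is $\sum_{n>N}n^2 2^{-n}\le\frac14$, so everything happens before scale $\frac12$.
\item If the quality at level $2^{-n-1}$ reaches $\frac12$ at some micro-step, scaling reduces to $r_{n-1}$.
\item If it never does, each micro-step has a uniform positive probability of intersection, so this event has weight at most $(1-c_2)^{n^2}$.
\item That weight is negligible \emph{relative to} the lower bound $c_1 2^{-n(\beta\lambda+\beta_0)}$ on $\mathbb E[X^\lambda]$ that Lemma~\ref{decay rate} gives at quality level $2^{-n}$.
\end{itemize}
This yields $r_n\ge(1-\epsilon_n)r_{n-1}$ with $\epsilon_n\le c\,e^{-\beta_1 n^2+Cn}$, summable uniformly in $\lambda\in[\lambda_0,\lambda_1]$ and $\alpha\le\alpha_*$, so $\inf_n r_n>0$. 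The relative control comes from pitting decay exponential in $n^2$ against a lower bound polynomial in the shrinking quality threshold. A single fixed threshold cannot reproduce this.

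Two smaller points. First, your bad-quality estimate asks for more than is needed and is not justified as stated. A pointwise bound $\mathbb E[X_1^\lambda\mid\mathcal F_{1/2}]\le\eta(\epsilon)X_{1/2}^\lambda$ with $\eta(\epsilon)\to0$ would need a Beurling-type estimate for the realized past. However, small-scale hittability of a three-dimensional Brownian path is only a high-probability property, the past is conditioned on non-intersection, and $P$ is arbitrary. The induction above only uses a uniform positive intersection probability per micro-step, via $\mathbb E[1-X^\lambda]\ge\min\{\lambda_0,1\}\,\mathbb E[1-X]$ and scaling. Second, the good-quality step cannot start exactly at scale $\frac12$. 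The event $U(\frac12,1)$ already requires $B^1(T^1_{1/2})\in -A$, which is an $\mathcal F_{1/2}$-measurable condition that good quality does not force. Lemma~\ref{decay rate} needs a window (from $0$ to $\frac14$ in its own units) to steer the paths into the cones, which is why the paper starts from $a_n\le\frac12$ and uses monotonicity in $a_n$.
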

Before proving these two lemmas, we comment on the strategy. Lemma~\ref{decay rate}
is used only in the proof of Lemma~\ref{separate at one scale}; the additional
complexity caused by the BLS appears in Lemma~\ref{decay rate} only. The proof of
Lemma~\ref{separate at one scale} relies on a multiscale analysis. Intuitively,
one decomposes a scale into ``micro-scales'' according to the quality. At each
micro-scale, the quality becomes sufficiently good with high probability, and
this implies that separation occurs soon thereafter. If the corresponding
probability losses are summable over the micro-scales, one obtains the desired
uniform constant.
\begin{proof}[Proof of Lemma \ref{decay rate}]
    Cover the unit sphere by $O(\epsilon^{-2})$ balls of radius
    $\epsilon/16$ centered on the sphere. By the definition of $Q_{\epsilon}$,
    there exists $x$ such that
    $$\Pb((\delta_0 \ge \epsilon ) \cap (B(T_0) \in \mathcal{B}_{\frac{\epsilon}{16}}(x) ) \mid \overline{B_0} \cap \Lambda_0 = \emptyset) \ge c\epsilon^{2} \epsilon.$$
    By rotation invariance, we may assume $x=u$. Define
    $$V^-=(u+A_{\frac{1}{20}})\cup \mathcal{B}_{\frac{\epsilon}{2}}(u);\quad V=(u+A)\cup \mathcal{B}_{\epsilon}(u).$$
    Let 
    $$E_1= B[T_0,T_1] \subseteq V^-; \quad E_2= \Lambda(V^-) \subseteq V; \quad E_3= (B^1[T_0^1,T^1_1] \cap V =\emptyset) \cap U(\frac{1}{4},1).$$
    By an argument similar to that in Lemma~\ref{lem:BM stays in a cone},
    there exist universal constants $c_1,c_2$ such that
    \begin{equation}\label{eq:E_3}
        \Pb(E_3) \ge c_1 \epsilon^{c_2}
    \end{equation}
    Let $N=\{\overline{B_1} \cap \Lambda_1 = \emptyset\}$. We now bound
    $\mathbb{E}[\overline{X}^{\lambda}_{\frac{1}{4},1}\mathbf{1}_{U(\frac{1}{4},1)}]$
    from below.
    \begin{align*}
        &\mathbb{E}[\overline{X}^{\lambda}_{\frac{1}{4},1}\mathbf{1}_{U(\frac{1}{4},1)}]  \ge \mathbb{E}[\overline{X}^{\lambda}_{\frac{1}{4},1}\mathbf{1}_{E_2}\mathbf{1}_{E_3}] \\
         \ge \,\;&\mathbb{E}[(\mathbb{P}((\delta_0 \ge \epsilon ) \cap (B(T_0) \in \mathcal{B}_{\frac{\epsilon}{16}}(x)) \cap E_1  \mid \overline{B_0} \cap \Lambda_0 = \emptyset))^{\lambda}\mathbf{1}_{E_2}\mathbf{1}_{E_3}] \\
        \overset{\operatorname{FKG}}{\ge}&c\epsilon^{3\lambda}\Pb(E_1 \cap N \mid \overline{B_0} \cap \Lambda_0 = \emptyset)^{\lambda}  \Pb(E_2)\Pb(E_3) \overset{\eqref{eq:lem2.6conc},\eqref{eq:lem2.8}} {\underset{(*)}{\ge}} c\epsilon^{(3+\beta_1)\lambda} \Pb(E_2)\Pb(E_3) \overset{\eqref{eq:lem2.7conc},\eqref{eq:E_3}}{\ge} c \epsilon^{\beta \lambda+\beta_0}.
    \end{align*}
    We explain how \eqref{eq:lem2.6conc} and \eqref{eq:lem2.8} imply $(*)$. The obstacle is that, although the two Brownian motions
    are separated, the initial configuration $P$ can approach $B(T_0)$
    arbitrarily closely. Since $P\subseteq \Db$, conditioned on
    $\overline{B_0} \cap \Lambda_0 = \emptyset$, we can use
    \eqref{eq:lem2.8} to separate $B(t)$ from $P$. Once they are separated, we
    use \eqref{eq:lem2.6conc} to force the Brownian motion to stay in a cone,
    which gives the desired bound.
\end{proof}
We briefly explain the idea of the proof of Lemma~\ref{separate at one scale}. Ideally, one would like to
decompose the scale into dyadic subscales. However, in order to apply the
separation argument, one has to allow some extra time for the quality to
improve. To handle this, we associate to the $n$-th scale a time interval
of length $n^2 2^{-n}$. We then show that, at each such micro-scale, the
corresponding loss is summable in $n$. This yields a uniform positive
probability of separation.
\begin{proof}[Proof of Lemma \ref{separate at one scale}]
    Choose $N$ sufficiently large so that
    $$\sum_{n=N+1}^{\infty} \frac{n^2}{2^{n}} \le \frac{1}{4}.$$
    Let $a_{n+1}=a_n+\frac{n^2}{2^{n}}$ for $n\ge N$, and set
    $a_n=\frac{1}{4}$ for $n \le N$. Then $a_n \le \frac{1}{2}$ for all $n$.
    Recall the definition of $Q_{\epsilon}$ from \eqref{eq:quality}. For
    simplicity, write $Q^n=Q_{2^{-n}}$.
    Let 
    $$r_n= \inf \frac{\mathbb{E}[\overline{X}_{a_n,1}^{\lambda}\mathbf{1}_{U(a_n,1)}]}{\mathbb{E}[X^{\lambda}_{a_n}]}.$$
    Here the infimum is over all initial configurations satisfying
    $Q^{n+2} \ge 1/2$.
    The numerator in the ratio defining $r_n$ is increasing in $a_n$,
    while the denominator is decreasing in $a_n$. Hence
    $$\mathbb{E}[\overline{X}_{\frac{1}{2},1}^{\lambda}\mathbf{1}_{U(\frac{1}{2},1)}]\ge \inf\limits_{n} r_n \mathbb{E}[X_1^{\lambda}].$$
    We will show that there exist $\epsilon_n$ such that
    $r_n \ge (1-\epsilon_n)r_{n-1}$ and
    $\sum_{n=1}^{\infty} \epsilon_{n}$ is uniformly bounded. This gives the
    desired constant.

    Let $\sigma_n$ denote the smallest positive integer $j$ such that
    $$\Pb(\delta_{j2^{-n}} \ge 2^{-n-1} \mid B(0,T_{j2^{-n}}] \cap \Lambda_{j2^{-n}} =\emptyset) \ge \frac{1}{2}.$$
    Assume that the initial configuration satisfies $Q^{n+2}\ge\frac{1}{2}$.
    If $j<n^2$, on the event $\sigma _n=j$ we have 
    $$\mathbb{E}[\overline{X}_{a_n,1}^{\lambda}\mathbf{1}_{U(a_n,1)} \mid \mathcal{F}_{j2^{-n}}]= X^{\lambda}_{j2^{-n}}\mathbb{E}\Big[\frac{\overline{X}_{a_n,1}^{\lambda}}{X^{\lambda}_{j2^{-n}}} \mathbf{1}_{U(a_n,1)} \mid \mathcal{F}_{j2^{-n}}\Big].$$
    $$\mathbb{E}[{X}_{a_n}^{\lambda} \mid \mathcal{F}_{j2^{-n}}] =X^{\lambda}_{j2^{-n}} \mathbb{E}\Big[\frac{X_{a_n}^{\lambda}}{X^{\lambda}_{j2^{-n}}}  \mid \mathcal{F}_{j2^{-n}}\Big].$$
    Dividing the first equation by the second gives
    \begin{equation*}
        \frac{\mathbb{E}[\overline{X}_{a_n,1}^{\lambda}\mathbf{1}_{U(a_n,1)} \mid \mathcal{F}_{j2^{-n}}]}{\mathbb{E}[{X}_{a_n}^{\lambda} \mid \mathcal{F}_{j2^{-n}}]}=\frac{\mathbb{E}\Big[\frac{\overline{X}_{a_n,1}^{\lambda}}{X^{\lambda}_{j2^{-n}}} \mathbf{1}_{U(a_n,1)} \mid \mathcal{F}_{j2^{-n}}\Big]}{\mathbb{E}\Big[\frac{X_{a_n}^{\lambda}}{X^{\lambda}_{j2^{-n}}}  \mid \mathcal{F}_{j2^{-n}}\Big]}
    \end{equation*}
    By the scaling invariance of Brownian motion and the BLS, and by the
    definition of $r_n$, on the event $\sigma_n=j$ we have
    $$\mathbb{E}[\overline{X}_{a_n,1}^{\lambda}\mathbf{1}_{U(a_n,1)} \mathbf{1}_{\sigma_n=j} \mid \mathcal{F}_{j2^{-n}}] \ge r_{n-1} \mathbb{E}[{X}_{a_n}^{\lambda} \mathbf{1}_{\sigma_n=j} \mid \mathcal{F}_{j2^{-n}}].$$
    Summing over all $j\le n^2$ gives
     $$\mathbb{E}[\overline{X}_{a_n,1}^{\lambda}\mathbf{1}_{U(a_n,1)} \mathbf{1}_{\sigma_n \le n^2} ] \ge r_{n-1} \mathbb{E}[{X}_{a_n}^{\lambda} \mathbf{1}_{\sigma_n \le n^2}].$$
     We now estimate the weighted contribution of the event
     $\{\sigma_n > n^2\}$. Let $Q(1)$ denote the random variable
     $\Pb(\delta_{2^{-n}} \ge 2^{-n-1} \mid B(0,T_{2^{-n}}] \cap \Lambda_{2^{-n}} =\emptyset)$.
     We claim that there exists a constant $c_2(\lambda_0)>0$ such that, for
     every $n\ge1$ and every initial configuration with
     $Q^{n+1} \le \frac{1}{2}$,
     we have
     \begin{equation}\label{eq:intersect with positive probability}
         \mathbb{E}[X_{2^{-n}}^{\lambda} \mathbf{1}_{Q(1)<\frac{1}{2}}] \le 1-c_2.
     \end{equation}
     An elementary inequality gives
     $\mathbb{E}[1-X_{2^{-n}}^{\lambda}]\ge
     \min \{\lambda_0,1 \}\mathbb{E}[1-X_{2^{-n}}]$. It therefore suffices to
     show that, for any initial configuration with $Q^{n+1}\le \frac{1}{2}$,
     we have $\mathbb{E}[1-X_{2^{-n}}]\ge c_2'$ for some universal constant
     $c_2'>0$. This term is bounded from below by the intersection probability
     of two Brownian motions, and the desired bound follows from Brownian
     scaling invariance.
     By iterating \eqref{eq:intersect with positive probability}, we obtain
     \[
        \mathbb{E}[{X}_{a_n}^{\lambda} \mathbf{1}_{\sigma_n > n^2}] \le (1-c_2)^{n^2}=e^{-\beta_1 n^2},
     \]
     for some $\beta_1>0$.
     Using Lemma~\ref{decay rate}, we conclude that
     $$ \mathbb{E}[{X}_{a_n}^{\lambda} \mathbf{1}_{\sigma_n \le n^2}] \ge (1-c_1\exp(-\beta_1 n^2 + \beta\lambda + \beta_0)) \mathbb{E}[{X}_{a_n}^{\lambda}].$$
     Since the sum of
     $c_1\exp(-\beta_1 n^2 + (\beta\lambda +\beta_0)n)$ is uniformly bounded
     for $\lambda \in [\lambda_0,\lambda_1]$, the lemma follows.
\end{proof}
We now use Lemma~\ref{separate at one scale} to prove Proposition~\ref{separation lemma}.
\begin{proof}[Proof of Proposition~\ref{separation lemma}]
    Set the initial configuration
    $P=e^{-(r-1)}\Lambda_{r-1}$ and
    $z=e^{-(r-1)}B^1(T^1_{r-1})$. By scaling invariance,
$$\mathbb{E}\left[\frac{\overline{X}^{\lambda}_{r-\frac{1}{2},r}}{X_{r-1}^{\lambda}} \mathbf{1}_{U(r-\frac{1}{2},r)}\Bigg| \mathcal{F}_{r-1}\right] \ge c \mathbb{E}\left[\frac{X_{r}^\lambda}{X_{r-1}^{\lambda}} \Big| \mathcal{F}_{r-1}\right].$$
This gives Proposition~\ref{separation lemma}.
\end{proof}

\subsection{Up-to-constants estimates for {$p(\alpha,1,r,\lambda)$}}
In this subsection, we prove Theorem~\ref{thm:up-to-constant} for $k=1$ (the up-to-constants estimates for the generalized non-intersection probability) using the separation lemma from the
previous subsection. For simplicity, let $\phi(r)=p(\alpha,1,r,\lambda)$. For
$r<s$, define $\Lambda_{r,s}$ to be the Brownian excursion
$B^1[\sigma^1_{r},T^1_{s}]$ together with the clusters in
$\Lc_s \setminus \Lc_r$ that it intersects. Let $\overline{N}_{r,s}$ denote
the event $B[0,T_r] \cap \Lambda_s = \emptyset$, and let $N_{r,s}$ be the
event $B[\sigma_{r},T_{s}] \cap \Lambda_{r,s} =\emptyset$. Here
\(\sigma_r\) and \(\sigma_r^1\) denote the last hitting times of \(S_r\)
before \(T_s\) and \(T_s^1\), respectively.

\begin{proof}[Proof of Theorem~\ref{thm:up-to-constant} for $k=1$]
    For the lower bound, it suffices to show that  
    \begin{equation}\label{eq:sub}
        \phi(r+s+1) \le \phi(r) \phi(s).
    \end{equation}
    The proof is the same decoupling argument as in
\cite[Theorem~3.3]{GLQ26}. Although that result is stated in two dimensions,
the argument uses only the Markov property, scale invariance, and independence
of the loop soup in disjoint annuli, and therefore applies verbatim in the
present three-dimensional setting.

    For the upper bound, it suffices to show that there exists a universal constant $C_2>0$ such that
    $$\phi(r+s+2) \ge C_2 \phi(r)\phi(s).$$
    Let $E$ be the event $B[\sigma_{r+2},\sigma_{r+2+\frac{1}{2}}] \in A$, $E_1$ be the event $B[\sigma^1_{r+2},\sigma^1_{r+2+\frac{1}{2}}] \in -A$.  Let $F$ be the event $B[T_{r},T_{r+\frac{1}{2}}] \in A$, $F_1$ be the event $B^1[T^1_{r},T^1_{r+\frac{1}{2}}] \in -A$. Let $L$ be the event that all clusters in $\Lc$ that intersect $\mathcal{A}(r-\frac{1}{10},r+\frac{21}{10})$ are of diameter less than $\frac{e^r}{100}$. Let $$U:=\{B[T_r,\sigma_{r+2}] \in A_{\frac{1}{18}} \cap \mathcal{A}(r-\frac{1}{10},r+\frac{21}{10})\};\quad U_1:=\{B^1[T^1_r,\sigma^1_{r+2}] \in -A_{\frac{1}{18}} \cap \mathcal{A}(r-\frac{1}{10},r+\frac{21}{10})\}.$$
    By Proposition~\ref{separation lemma},
    $$\mathbb{E}[(\Pb(\overline{N}_{r,r}\cap F))^{\lambda}\mathbf{1}_{F_1}] \ge c \mathbb{E}[X_r^{\lambda}].$$
    By the inversion invariance of Brownian motion,
    $$\mathbb{E}[(\Pb(N_{r+2,r+s+2}\cap E))^{\lambda}\mathbf{1}_{E_1}] \ge c \mathbb{E}[X_s^{\lambda}].$$
   Hence,
\[
\phi(r+s+2)\ge
\mathbb{E}\!\left[
\Pb(\overline{N}_{r,r+s+2}\cap F\cap N_{r+2,r+s+2}\cap E\cap U
\mid \mathcal{F}_{r+s+2})^{\lambda}
\mathbf{1}_{F_1}\mathbf{1}_{E_1}\mathbf{1}_{U_1}\mathbf{1}_{L}
\right].
\]
By the Markov decomposition at the two boundary spheres, the middle piece is a
Brownian excursion whose endpoints are uniformly distributed on the two spheres;
conditioned on these endpoints, it is independent of the inner and outer pieces.
Therefore Lemma~\ref{lem:glueing} gives a uniform positive lower bound for the
event \(U\). On the event \(U\cap U_1\cap L\), the middle annulus does not create
an intersection between the inner and outer parts. Thus, for some \(C_3>0\),
\[
\phi(r+s+2)\ge
C_3\mathbb{E}\!\left[
\Pb(\overline{N}_{r,r}\cap F \mid \mathcal{F}_{r+s+2})^{\lambda}
\Pb(N_{r+2,r+s+2}\cap E \mid \mathcal{F}_{r+s+2})^{\lambda}
\mathbf{1}_{F_1}\mathbf{1}_{E_1}\mathbf{1}_{U_1}\mathbf{1}_{L}
\right].
\]
Applying the same gluing estimate to the middle piece of \(B^1\), and using the
independence of the loop soups in the corresponding annuli together with the
FKG inequality for the cluster-small event \(L\), there exists \(C'>0\) such
that
\[
\phi(r+s+2)\ge
C_3^2C'\,
\mathbb{E}\!\left[
\Pb(\overline{N}_{r,r}\cap F \mid \mathcal{F}_{r+s+2})^{\lambda}\mathbf{1}_{F_1}
\right]
\mathbb{E}\!\left[
\Pb(N_{r+2,r+s+2}\cap E \mid \mathcal{F}_{r+s+2})^{\lambda}\mathbf{1}_{E_1}
\right].
\]

   Using Proposition~\ref{separation lemma} and inversion invariance, we obtain
   $$\phi(r+s+2)\ge (C_3)^2C'c^2 \phi(r) \phi(s).$$
   Choosing $C_2= (C_3)^2C'c^2$ and noting that all constants depend only
   on $\lambda_0,\lambda_1$ and $\alpha_*$ completes the proof.
\end{proof}
\subsection{Up-to-constants estimate for {$p(\alpha,k,r,\lambda)$}}
We now extend the argument from one Brownian path to a fixed number \(k\)
of Brownian paths. The only new point is that the separation lemma must
be stated with all \(k\) Brownian paths kept in the opposite cone. We give the
statement explicitly.

Let \(P\subseteq\mathcal B_1\) be a deterministic initial set and let
\(z_1,\ldots,z_k\in S_0\cap P\) be the starting points of
\(B^1,\ldots,B^k\). Set
\[
    \Gamma_r^{(k)}
    =
    P\cup \bigcup_{i=1}^k \overline{B^i_r}.
\]
Let \(\Lambda_r^{(k)}\) be the union of \(\Gamma_r^{(k)}\) and all
clusters in \(\Lc_r\setminus\Lc_0\) that intersect \(\Gamma_r^{(k)}\). Define
\[
    X_r^{(k)}
    =
    \Pb\bigl(
        \overline{B_r}\cap \Lambda_r^{(k)}=\emptyset
        \mid
        \overline{B_0}\cap \Lambda_0^{(k)}=\emptyset,\mathcal F_r
    \bigr),
\]
and, for \(r<s\),
\[
    \overline{X}_{r,s}^{(k)}
    =
    \Pb\bigl(
        \{\overline{B_s}\cap \Lambda_s^{(k)}=\emptyset\}
        \cap \{B[T_r,T_s]\subseteq A\}
        \mid
        \overline{B_0}\cap \Lambda_0^{(k)}=\emptyset,\mathcal F_r
    \bigr).
\]
The corresponding separation event is
\[
    U_k(s,r)
    =
    \bigcap_{i=1}^k
    \{B^i[T_s^i,T_r^i]\subseteq -A\}.
\]

\begin{proposition}[Separation lemma for general \(k\)]\label{prop:separation-general-k}
    Fix \(\alpha_*\in I_0\), \(k\in\mathbb N\), and
    \(0<\lambda_0<\lambda_1\). For every \(r\ge1\), every
    \(\alpha\in(0,\alpha_*]\), every \(\lambda\in[\lambda_0,\lambda_1]\), and
    every \(k\)-path initial configuration as above, there exists
    \(c=c(k,\lambda_0,\lambda_1,\alpha_*)>0\) such that
    \begin{equation}\label{eq:slk}
        \mathbb{E}\left[
            \bigl(\overline{X}_{r-\frac12,r}^{(k)}\bigr)^\lambda
            \mathbf{1}_{U_k(r-\frac12,r)}
        \right]
        \ge
        c\,\mathbb{E}\left[\bigl(X_r^{(k)}\bigr)^\lambda\right].
    \end{equation}
\end{proposition}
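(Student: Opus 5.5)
We follow the $k=1$ argument of Subsection~\ref{subsec:sl} essentially line by line: reduce to a one-scale statement, prove that by the micro-scale iteration of Lemma~\ref{separate at one scale}, and conclude by scaling at the end.

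\textbf{Quality and the $k$-path analogue of Lemma~\ref{decay rate}.} First we redefine the quality for $k$ paths by setting
\[
\delta_r^{(k)}=e^{-r}\min\bigl\{\operatorname{dist}\bigl(B(T_r),\textstyle\bigcup_i\overline{B^i_r}\bigr),\ \min_i\operatorname{dist}\bigl(B^i(T^i_r),\overline{B_r}\bigr)\bigr\},
\]
and we let $Q^{(k),r}_\epsilon$ be defined as in \eqref{eq:quality}, with $\Lambda_r$ replaced by $\Lambda_r^{(k)}$. The $k$-path version of Lemma~\ref{decay rate} then states the following: if $Q^{(k)}_\epsilon\ge\epsilon$, then
\[
\mathbb{E}\bigl[(\overline{X}^{(k)}_{\frac14,1})^\lambda\mathbf{1}_{U_k(\frac14,1)}\bigr]\ge c_1\epsilon^{\beta\lambda+\beta_0},
\]
with constants now depending on $k$ as well.

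The proof repeats the three-event argument. We pigeonhole over $O(\epsilon^{-2})$ caps to locate $B(T_0)$ near $u$. We keep $E_1=\{B[T_0,T_1]\subseteq V^-\}$ and $E_2=\{\Lambda(V^-)\subseteq V\}$ as before, and replace $E_3$ by $\bigcap_{i=1}^k E_3^i$, where $E_3^i$ asks that $B^i$ avoid $V$ and end up in $-A$ during $[T^i_{1/4},T^i_1]$.

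The key point is that the $B^i$ are independent given their starting points. Each starting point $B^i(T_0^i)$ lies at distance at least $\epsilon$ from $\overline{B_0}$, so the cone argument of Lemma~\ref{lem:BM stays in a cone} applies to each path separately. This gives $\Pb(\bigcap_iE_3^i)\ge(c_1\epsilon^{c_2})^k$, which is still polynomial in $\epsilon$.

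The FKG step is unchanged. Indeed, $\mathbf{1}_{E_2}$ and the avoidance event for $B$ are both decreasing functions of the loop soup, hence positively correlated. The step labelled $(*)$ uses only that the whole obstacle lies in $\Db$ away from the cap at $u$, together with Lemma~\ref{lem:conditioned-exit-cap}, and $\Lambda^{(k)}_0$ has the same property. The exponents $\beta,\beta_0$ therefore only pick up a factor depending on $k$.

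\textbf{One-scale separation and the micro-scale iteration.} Next we prove the analogue of Lemma~\ref{separate at one scale} with the same $a_n$ and $r_n$, and with $\sigma_n$ defined through $\delta^{(k)}$. The argument splits into two parts.

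The part on $\{\sigma_n\le n^2\}$ uses only the Markov property and the scaling invariance of the pair (the $k$ paths, the BLS). It is therefore verbatim the same as before.

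The part on $\{\sigma_n>n^2\}$ needs \eqref{eq:intersect with positive probability}, that is, $\mathbb{E}[1-X^{(k)}_{2^{-n}}]\ge c_2'$ whenever the quality is bad. We bound the intersection probability of $B$ with $\Lambda^{(k)}$ from below by its intersection probability with a single path $B^{i}$. Bad quality means that either $B$ ends near some $\overline{B^i}$, or some $B^i$ ends near $\overline{B}$. In either case the endpoint is within $2^{-n}e^{0}$ of the other path, so the classical two-Brownian-motion hitting estimate at that scale gives a uniform $c_2'>0$, independent of $k$.

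Iterating this bound gives a decay of order $e^{-\beta_1 n^2}$. Combined with the polynomial bound from the $k$-path Lemma~\ref{decay rate}, this yields summable errors $\epsilon_n$, so $\inf_n r_n>0$.

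\textbf{Main obstacle and conclusion.} The main obstacle I expect is the first step, namely separating all $k$ paths simultaneously when the quality hypothesis is only a joint average. The bad-quality alternative must be handled with a minimum over $i$, and FKG must still be applicable. Here FKG is needed only with respect to the loop soup, with the Brownian paths held fixed. Since all $k$ avoidance constraints are decreasing in the soup, this works.

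Finally, \eqref{eq:slk} follows exactly as in the proof of Proposition~\ref{separation lemma}. We rescale by $e^{-(r-1)}$, take the initial set $P=e^{-(r-1)}\Lambda^{(k)}_{r-1}$ with starting points $z_i=e^{-(r-1)}B^i(T^i_{r-1})$, and condition on $\mathcal F_{r-1}$.
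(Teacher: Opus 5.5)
Your proposal is correct and matches the paper's proof: replace the quality by $\delta^{(k)}_r$, and in the analogue of Lemma~\ref{decay rate} force $B$ into $A$ and each $B^i$ into $-A$, which costs only a $k$-dependent polynomial power of $\epsilon$. Then keep the cluster-containment and FKG step unchanged, run the same micro-scale iteration (still summable because $k$ is fixed), and conclude by scaling, with your single-path lower bound in the bad-quality case consistent with the paper's sketch.
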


\begin{proof}
    The proof is the same as the proof of
    Proposition~\ref{separation lemma}, with constants allowed to depend on
    \(k\). The quality variable is replaced by
    \[
        \delta_r^{(k)}
        =
        e^{-r}
        \min\left\{
            \operatorname{dist}\left(B(T_r),\bigcup_{i=1}^k \overline{B^i_r}\right),
            \min_{1\le i\le k}
            \operatorname{dist}\left(B^i(T_r^i),\overline B_r\right)
        \right\}.
    \]
    In the analogue of Lemma~\ref{decay rate}, the Brownian motion \(B\)
    is forced into the cone \(A\), while each of the \(k\) Brownian paths is
    forced into \(-A\). This changes only the polynomial power of
    \(\epsilon\). The cluster-containment estimate is unchanged, and the
    micro-scale iteration in Lemma~\ref{separate at one scale} remains summable
    because \(k\) is fixed. Scaling then gives \eqref{eq:slk}.
\end{proof}

We now finish the proof of Theorem~\ref{thm:up-to-constant} for general $k$.
\begin{proof}[Proof of Theorem~\ref{thm:up-to-constant} for general
\(k\)]

Let \(\phi_k(r)=p(\alpha,k,r,\lambda)\). The lower bound is obtained
from the same Markov decomposition as in the case \(k=1\), giving
\[
    \phi_k(r+s+1)\le \phi_k(r)\phi_k(s).
\]
For the reverse inequality, apply
Proposition~\ref{prop:separation-general-k} to the inner pieces and, after
inversion, to the outer pieces. The gluing step is unchanged except that it is
performed for each of the \(k\) Brownian paths; this costs only a positive
constant depending on \(k\). The cluster-small event in the middle annulus again
prevents the loop soup from creating intersections between the inner and outer
pieces. Hence there exists \(C=C(k,\lambda_0,\lambda_1,\alpha_*)>0\) such that
\[
    \phi_k(r+s+2)\ge C\,\phi_k(r)\phi_k(s).
\]
Together with the preceding submultiplicative inequality, this gives
\eqref{eq:up-to-constant} for \(p(\alpha,k,r,\lambda)\), with constants
depending only on \(k\), \(\alpha_*\), \(\lambda_0\), and \(\lambda_1\).
\end{proof}

\section{Dimension of local cut points in a BLS}
In this section, we compute the Hausdorff dimension of local cut points
of the three-dimensional BLS. The main input is
Theorem~\ref{thm:up-to-constant}, proved in the previous section. We follow
the framework developed in \cite{GLQ26} (see also \cite{Lawler96}). First, we compute the dimension of
local cut points on a single Brownian loop in the presence of an independent
BLS. Second, we prove an almost sure upper bound by a first-moment estimate
and a positive-probability lower bound by a second-moment estimate. Finally, we
use ergodicity of the BLS to upgrade the lower bound to an almost sure
statement.
\begin{proposition}\label{Estimate of dimension}
    For all $\alpha \in I_0$, the following holds almost surely. 
    $${\rm dim}_{\mathcal{H}}(G_{\rm loc}) \le \max \{2-\xi_{\alpha}(1,1),0 \},$$
    and the following holds with positive probability
    $${\rm dim}_{\mathcal{H}}(G_{\rm loc}) \ge \max \{2-\xi_{\alpha}(1,1),0 \}.$$
\end{proposition}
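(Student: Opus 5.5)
We follow the scheme of \cite{Lawler96} and \cite{GLQ26}. The first step is to reduce from the whole soup to a single loop. The loops of \(\Lc^\alpha\) of diameter in \([2^{-m},2^{-m+1}]\) that meet a fixed unit cube are a.s.\ finite in number. By the Palm (Mecke) formula, a loop \(\gamma\) sampled from \(\mu_{\rm loop}\) together with an independent copy of \(\Lc^\alpha\) has the same joint law as a loop of the soup together with the rest of the soup. It therefore suffices to prove both bounds for \(G^\gamma_{\rm loc}\), where \(\gamma\) is a Brownian loop (a bridge of fixed duration, localized in a fixed ball) and the environment is an independent \(\Lc^\alpha\); countable stability of Hausdorff dimension then passes from single loops to \(G_{\rm loc}\). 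We further cut the bridge into finitely many pieces, so that we may work with a Brownian path segment \(W=\gamma[t_1,t_2]\) strictly inside the loop, with the rest of the loop treated as part of the obstacle.

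\textbf{Upper bound (first moment).} Fix a scale \(\epsilon=e^{-n}\) and a point \(x\) in the relevant cube. Suppose some \(y\in\mathcal B_\epsilon(x)\cap W\) is a local cut point with witnessing radius at least \(\epsilon_0\), where \(\epsilon_0\) is fixed and we take a union over rational \(\epsilon_0\). Then the two arms of \(\gamma\) leaving \(\mathcal B_{\epsilon}(x)\) must cross the annulus \(\mathcal A(\log\epsilon,\log\epsilon_0)\) around \(x\) in such a way that the first arm, enlarged by the clusters of the soup restricted to the annulus, does not meet the second arm. Cluster monotonicity gives this: if the two arms were connected inside the annulus by clusters, removing \(y\) would not disconnect \(\mathcal C^\gamma_{\epsilon_0}(y)\).

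We then decompose the bridge at the hitting of \(\partial\mathcal B_\epsilon(x)\), which contributes probability \(\asymp\epsilon\) in three dimensions. The two exiting pieces are, after inversion and Lemma~\ref{lem:inversion invariance}, comparable to \(B\) and \(B^1\) in the definition of \(p(\alpha,1,r,1)\) with \(r=\log(\epsilon_0/\epsilon)\). This gives
\[
\Pb(\exists\text{ cut point in }\mathcal B_\epsilon(x))\le C\,\epsilon\,\epsilon^{\xi_\alpha(1,1)},
\]
using the upper bound of Theorem~\ref{thm:up-to-constant} with \(\lambda=1\). Here \(\mathbb E[Z_r]\) is exactly the probability of the two-path non-intersection event. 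Summing over the \(\epsilon^{-3}\) boxes, the expected number of covering balls is \(\le C\epsilon^{-(2-\xi_\alpha(1,1))}\), which yields \(\dim_{\mathcal H}\le 2-\xi_\alpha(1,1)\) a.s., and emptiness when \(\xi_\alpha(1,1)>2\).

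\textbf{Lower bound (second moment).} Assume \(\xi:=\xi_\alpha(1,1)<2\). Define the events \(I_n(x)\) that \(W\) passes within \(e^{-n}\) of \(x\) and that the two arms are ``nicely separated'' non-intersecting from scale \(e^{-n}\) to scale \(1\), with both endpoints in opposite cones as in Proposition~\ref{separation lemma}. By Theorem~\ref{thm:up-to-constant} and the separation lemma, \(\Pb(I_n(x))\asymp e^{-n(1+\xi)}\). The key estimate is
\[
\Pb\bigl(I_n(x)\cap I_n(y)\bigr)\le C\,e^{-2n(1+\xi)}|x-y|^{-(1+\xi)}.
\]
We prove it by splitting into the scales below \(|x-y|\) around each of \(x\) and \(y\) and the scales above \(|x-y|\). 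We use independence of the soup in disjoint annuli, the strong Markov property, and submultiplicativity \eqref{eq:sub}, discarding the intermediate annulus at a constant cost. Frostman's lemma, via the energy of the limiting measure \(e^{n(1+\xi)}\mathbf 1_{I_n(x)}dx\), then gives \(\dim\ge 3-(1+\xi)=2-\xi\) with positive probability, because \(1+\xi<3\).

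Finally, we must check that limit points of \(\bigcap_n\{I_n\}\) are genuine local cut points. Here the separation and the cone conditions ensure that the nested non-intersection events at all scales combine into a disconnection of \(\mathcal C^\gamma_{\epsilon}(x)\setminus\{x\}\), using closedness of clusters in the subcritical regime (Lemma~\ref{lem:cluster_small 1}). We expect this and the two-point estimate to be the main obstacles. The difficulty is that clusters can straddle the annuli used in the decoupling. The plan to handle this is to insert the cluster-small event \(L\) at the boundary annuli, exactly as in the gluing step of the proof of Theorem~\ref{thm:up-to-constant}, and to use FKG for the lower bound. For the upper bound we use the plain monotonicity obtained by restricting to \(\Lc_r\setminus\Lc_0\), which only makes non-intersection easier.
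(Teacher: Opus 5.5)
Your overall route is the paper's. You use a Palm/Mecke reduction to one loop in an independent soup; the paper roots the loop via the bubble decomposition of the loop measure rather than cutting a bridge into segments, but that difference is cosmetic. You then run a first-moment covering with cut-box probability $\asymp 2^{-n(1+\xi_\alpha(1,1))}$ and a second-moment/energy argument with a two-point bound of the same form as \eqref{second moment}. Your upper bound is essentially the paper's.

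The gap is in the step you flag at the end: showing that the set produced by the second-moment argument consists of genuine local cut points. The mechanism you propose, that the separation and cone conditions make the events $I_n$ nest and combine into a disconnection, does not touch the real obstruction.

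\begin{itemize}
\item The cone conditions constrain only the two distinguished arms. They say nothing about $W$, or the rest of $\gamma$, making further crossings of the annulus around $x$. For example, the piece of $W$ between its first entrance into and last exit from $\mathcal B_{e^{-n}}(x)$ may travel out to scale $1$ and come back.
\item Such extra strands lie in $\mathcal U^\gamma_\epsilon(x)$ and can connect the two arms.
\item They also mean that the arms selected at consecutive scales may belong to different visits of the path, so the events $I_n$ are not nested.
\item Lemma~\ref{lem:cluster_small 1} is only a positive-probability smallness estimate for clusters. It gives no closedness property that repairs this.
\end{itemize}

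The missing idea is to pay for the extra crossings instead of trying to exclude them by separation. The paper introduces the set $F_{j,n}$ of cut-boxes whose annulus is crossed at least four times. It proves $\Pb(D\in F_{j,n})\le C2^{-n(2+\xi_\alpha(1,1))}$, since an extra crossing costs a further factor $2^{-n}$ in three dimensions. It then shows that every limit point of cut-boxes that is not a $j$-cut point lies in $\bigcap_n\bigcup_{D\in F_{j,n}}D$. That set has dimension at most $\max\{1-\xi_\alpha(1,1),0\}$, which is strictly less than $2-\xi_\alpha(1,1)$ when $\xi_\alpha(1,1)<2$. Countable stability of Hausdorff dimension then transfers the second-moment lower bound to $\mathcal E_j$.

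Once this is in place, the separation conditions in $I_n$ are unnecessary. Also, the two-point estimate is an upper bound, so restricting the soup to disjoint annuli already gives the needed independence; no cluster-small event or FKG is required there.
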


\subsection{Local cut points on a single Brownian loop}
The purpose of this subsection is to isolate the local picture around a
single loop which is relevant for cut points. A local cut point on a loop is
detected by looking at two strands of the same loop entering and leaving a
small neighborhood of the point, together with the loop-soup clusters attached
to these strands. Thus the problem reduces to estimating the probability that
two Brownian excursions across a small annulus remain disjoint after the
loop-soup enlargement. This is precisely the type of event controlled by the
generalized non-intersection probabilities from
Theorem~\ref{thm:up-to-constant}. The notation below is introduced in order to
make this reduction uniform over all small boxes in the bulk.

Fix $r \in [1/4,1/2]$ and $a\in S_0$. Let
$D_0=[-1/16,1/16]^3$. Sample a subcritical BLS $\Lc^{\alpha}$ in the whole
space $\mathbb{R}^3$. Let $\gamma$ be a Brownian loop sampled from the measure
$\mu^{{\rm bub},\mathcal{B}_r}_{ar}$, conditioned on
$\{\gamma \cap D_0 \ne \emptyset\}$. For $x\in\gamma$ and $\epsilon>0$, let
$\mathcal U_\epsilon^\gamma(x)$ be the connected component of $\gamma \cap \mathcal{B}_{\epsilon}(x)$ that contains $x$. Let $\mathcal C_\epsilon^\gamma(x)$ be the
union of $\mathcal U_\epsilon^\gamma(x)$ together with all clusters of $\Lc_{\mathcal{B}_{\epsilon}(x)}^{\alpha{}}$ it intersects. For
$j\ge1$, we say that $x\in\gamma$ is a $j$-cut point if
$\mathcal C_{2^{-j}}^\gamma(x)\setminus\{x\}$ is no longer connected. Let
$\mathcal E_j$ be the set of $j$-cut points, and let
$G=\bigcup_{j\ge4}\mathcal E_j$ be the set of local cut points on $\gamma$ in
the independent loop soup. We prove the following proposition.
\begin{proposition}\label{dimension of a single Brownian loop}
     For all $\alpha \in I_0$, the following holds almost surely:
    \begin{equation}
        {\rm dim}_{\mathcal{H}}(G) \le \max \{2-\xi_{\alpha}(1,1),0 \},
    \end{equation}
    and the following holds with positive probability
    \begin{equation}
        {\rm dim}_{\mathcal{H}}(G) \ge \max \{2-\xi_{\alpha}(1,1),0 \}.
    \end{equation}
\end{proposition}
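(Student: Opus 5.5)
We follow the standard first/second-moment scheme of \cite{Lawler96} and \cite{GLQ26}, with Theorem~\ref{thm:up-to-constant} (for $k=1$, $\lambda=1$) as the only exponent input. For $n\ge 4$, partition $D_0$ into $\asymp 2^{3n}$ dyadic cubes $Q$ of side $2^{-n}$ with centers $x_Q$. We call $Q$ \emph{$n$-good} if $\gamma$ hits $Q$ and two strands of $\gamma$ do not connect within $\mathcal A$. Here the two strands are the piece of $\gamma$ from its first visit to $Q$ back out to $\partial\mathcal B_{2^{-j}}(x_Q)$ and the piece entering from there, and $\mathcal A$ is the annulus $\mathcal B_{2^{-j}}(x_Q)\setminus\mathcal B_{2\cdot 2^{-n}}(x_Q)$. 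Connection means becoming connected after attaching the clusters of the loop soup inside the annulus that they hit. If $x\in\mathcal E_j\cap Q$, then for $n\ge j+2$ the cube $Q$ is $n$-good, up to also handling the other strands of $\gamma$ near $x$, which is harmless by transience.

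\textbf{One-point estimate.} We first show
\[
\Pb(Q\text{ is }n\text{-good})\asymp 2^{-n}\,2^{-(n-j)\xi_\alpha(1,1)}.
\]
The factor $2^{-n}$ is the probability that the bubble $\gamma$ hits a ball of radius $2^{-n}$ in the bulk, by Green's function in $d=3$. For the second factor, we use path decomposition at the first and last visits to $\mathcal B_{2\cdot 2^{-n}}(x_Q)$ to write the two strands as (essentially) independent excursions across the annulus. After scaling, the non-intersection event is exactly the one defining $\mathbb E[Z_r]=p(\alpha,1,r,1)$ with $r=(n-j)\log 2$. The point is that $\mathbb E[Z_r]$ is the probability that $B$ avoids $B^1$ together with its clusters, and avoiding the enlargement of $B^1$ is equivalent to the two enlarged strands being in different clusters. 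Theorem~\ref{thm:up-to-constant} then gives the two-sided bound up to constants.

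\textbf{Upper bound.} The first moment gives
\[
\mathbb E\#\{n\text{-good }Q\}\lesssim 2^{n(2-\xi_\alpha(1,1))}.
\]
Covering $\mathcal E_j$ by good cubes and taking $n\to\infty$ yields $\dim_{\mathcal H}\mathcal E_j\le \max\{2-\xi,0\}$ a.s. for each $j$. Countable stability then gives the bound for $G$.

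\textbf{Lower bound.} We build a Frostman measure from $\mu_n=\sum_{Q\ n\text{-good}} 2^{-n(2-\xi)}\cdot 2^{3n}\,\mathrm{Leb}|_Q$. The key estimate is the two-point bound
\[
\Pb(Q,Q'\text{ both good})\lesssim 2^{-n}\,2^{-n(\xi)}\cdot |x_Q-x_{Q'}|^{-1}\,\bigl(2^n|x_Q-x_{Q'}|\bigr)^{-\xi}\,|x_Q-x_{Q'}|^{\xi}\cdot(\ldots),
\]
that is, a product of the one-point costs at the two small scales and a cost at the separating scale $m$ with $2^{-m}\asymp|x_Q-x_{Q'}|$. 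To prove it, we decompose $\gamma$ into excursions between the spheres around $x_Q$ and $x_{Q'}$ at radius $2^{-m}/4$. We use independence of the loop soup in disjoint balls, restricted to clusters contained in each ball, which only shrinks the event, and apply the upper bound of Theorem~\ref{thm:up-to-constant} in each ball. The up-to-constants form (rather than mere exponents) is essential here, since it gives $\mathbb E[\mu_n(D_0)^2]\lesssim \mathbb E[\mu_n(D_0)]^2$. It also gives finite $\beta$-energy $\iint|x-y|^{-\beta}\,d\mu_n\,d\mu_n$ uniformly in $n$ for all $\beta<2-\xi$. Paley--Zygmund and a subsequential weak limit, supported on $\bigcap_n\overline{\bigcup\{\text{good }Q\}}$, then give $\dim\ge 2-\xi$ with positive probability.

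Two steps remain. We must check that limit points are genuine local cut points, i.e. in $\mathcal E_j$ for a fixed $j$. This uses the separation lemma (Proposition~\ref{separation lemma}) to impose, at no more than constant cost, that the strands exit in separated cones. It also uses the smallness of all clusters near the outer sphere, via Lemma~\ref{lem:cluster_small 1} and FKG, so that nested good events are consistent and closedness passes to the limit. We expect the main obstacle to be the two-point estimate. Clusters of the loop soup are not localized, so one cluster may reach both neighborhoods, and $\gamma$ may revisit a ball several times. We would handle this by working with the soup restricted to disjoint balls, which is monotone in the right direction for the upper bound, and by bounding the extra returns of $\gamma$ via three-dimensional transience, since each return costs a factor that sums geometrically.
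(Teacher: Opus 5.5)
Your one-point estimate, first-moment upper bound and two-point estimate follow the paper's route. That route reduces the two crossings to independent excursions (Lemma~\ref{Translate loop into excursions}), applies Theorem~\ref{thm:up-to-constant}, and uses independent restricted soups in three disjoint annuli. Your displayed two-point formula has stray factors, but the bound you describe in words is exactly \eqref{second moment}.

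\textbf{The gap.} It lies in the step you defer to the end: showing that the points charged by your limiting measure are genuine local cut points.
\begin{itemize}
\item Your good event at scale $n$ sees only the two distinguished strands of $\gamma$ and loops contained in the annulus. It is blind to any other strand of $\gamma$ through the cube.
\item So a limit point $x$ at which $\gamma$ has a second local strand can pass every good test and still fail to be a local cut point. A double point of $\gamma$ is an example; these form a set of dimension $1$ in $\Rb^3$. The extra strand, with its clusters, can link the two distinguished halves, so that $\mathcal C^\gamma_{2^{-j}}(x)\setminus\{x\}$ is connected and $x\notin\mathcal E_j$.
\item Imposing cone separation and small clusters at the outer sphere, via Proposition~\ref{separation lemma} and Lemma~\ref{lem:cluster_small 1}, does nothing here. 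The obstruction sits at $x$, not at the outer boundary.
\item You also cannot discard this set by its dimension alone. For $\alpha\in I_0$ one may have $\xi_\alpha(1,1)\ge 1$, i.e.\ $2-\xi_\alpha(1,1)\le 1$.
\end{itemize}

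\textbf{The missing ingredient.} You already invoke the right ingredient elsewhere: extra returns of $\gamma$ are costly by transience. What is missing is its use here, which is the paper's bound \eqref{eq:F}: cut-boxes $D$ for which $\gamma$ crosses $\mathcal A_D$ at least four times have probability at most $C_4 2^{-n(2+\xi_\alpha(1,1))}$, since the extra crossing costs a factor $2^{-n}$.
\begin{itemize}
\item If along a subsequence $\gamma$ crosses $\mathcal A_{D_n}$ only twice, compactness gives $x\in\mathcal E_j$ directly.
\item Otherwise $x$ lies in the limsup of four-crossing cut-boxes. By a first-moment bound this set has dimension at most $\max\{1-\xi_\alpha(1,1),0\}$, strictly less than $2-\xi_\alpha(1,1)$ whenever the latter is positive.
\end{itemize}
This argument works for the limsup set, so no nesting or consistency across scales is needed, and your separation and cluster-smallness machinery is unnecessary. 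Alternatively, you could build ``exactly two crossings'' into the good event, which costs only a constant, and then run the same compactness argument.

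\textbf{A related fix to your strands.} Define the two strands as the paper does: the last crossing of the annulus before $\tau(D)$ and the last crossing after it, discarding the middle piece. If you split at the first visit to $Q$, the outgoing strand contains the portion of $\gamma$ between that visit and the visit to $x$. That portion belongs to the incoming half of $\mathcal U^\gamma_{2^{-j}}(x)$, and it re-enters the annulus with probability bounded below. Then $x\in\mathcal E_j\cap Q$ no longer implies that $Q$ is good, and the covering used in your upper bound breaks.
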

For each $n \ge 3$, divide $D_0$ into $2^{3(n-3)}$ non-overlapping
closed cubes of side length $2^{-n}$; we call these \(n\)-cubes. Let
$j\ge 4$ and $n \ge j+4$. Suppose that $D$ is an \(n\)-cube centered at
$v_D$. Let $t_\gamma$ be the time length of the Brownian loop $\gamma$, let
$H(D)$ be the event that $\gamma$ hits $D$, and let $\tau(D)$ denote the
hitting time of $D$.

By standard estimates on hitting probabilities of Brownian motion, there exist
universal constants $c_1,c_2>0$ such that for every such cube $D$ and every
$n\ge3$,
\begin{equation}\label{hitting probability}
    c_1 2^{-n} \le \Pb(H(D)) \le c_2 2^{-n}.
\end{equation}
Suppose \(D\) is an \(n\)-cube of side length \(2^{-n}\) centered at
\(v_D\).  We set
\[
    \widetilde D:=\mathcal B_{2^{-n}}(v_D),
    \qquad
    D^j:=\mathcal B_{2^{-j}-2^{-n}}(v_D).
\]
Then, for \(n\ge j+4\),
\[
    D\subseteq \widetilde D\subseteq D^j,
\]
and, moreover, if \(x\in D\), then \(D^j\subseteq \mathcal B_{2^{-j}}(x)\).
 Denote the annulus \(D^j\setminus\widetilde D\) by \(\mathcal A_D\), and let
\(\partial_{\rm in}:=\partial\widetilde D\) and
\(\partial_{\rm out}:=\partial D^j\) be its inner and outer boundaries,
respectively.
Let $v=\tau(D)$. Define
\begin{align*}
    s_1=\sup \{t< v : \gamma(t) \in \partial_{\rm out} \} &, \quad t_1= \inf\{t>s_1:\gamma(t)\in \partial_{\rm in}\}, \\
    s_2=\sup\{t>v:\gamma(t)\in \partial_{\rm in}\} &, \quad t_2 = \inf\{t>s_2: \gamma(t) \in \partial_{\rm out}\}.
\end{align*}
Let $\gamma^1= \gamma([s_1,t_1])$, $\gamma^2= \gamma([s_2,t_2])$, $X=\gamma([t_1,s_2])$. Let $\widetilde{\gamma}^1$ (respectively, $\widetilde{\gamma}^2$) denote the union of $\gamma^1$ (respectively, $\gamma^2$) with the clusters in $\Lc_{\mathcal{A}_D}$ that it intersects.

We say that $D$ is a $(j,n)$-cut-box if
$\widetilde{\gamma}^1\cap\widetilde{\gamma}^2=\emptyset$. Let $K_{j,n}$ be the set of
all $(j,n)$-cut-boxes. Let $F_{j,n}$ be the set of all $(j,n)$-cut-boxes
$D$ such that $\gamma$ crosses $\mathcal{A}_D$ at least four times. We will
use the following first- and second-moment estimates to prove
Proposition~\ref{dimension of a single Brownian loop}.
\begin{proposition}\label{First and second moment}
    For all $j\ge4$, there exist constants $C_1,C_2,C_3,C_4 >0$
    depending only on $r$, $j$, and $\alpha$ such that, for any
    $n\ge m \ge j+4$ and any cube $D$ of side length $2^{-n}$,
    we have
    \begin{equation}\label{first moment}
        C_1 2^{-n(1+\xi_{\alpha}(1,1))} \le \Pb(D \in K_{j,n}) \le C_2 2^{-n(1+\xi_{\alpha}(1,1))},
    \end{equation}
    and, for any pair of cubes $D,E$ of side length $2^{-n}$ with
    distance in $[2^{-m-1},2^{-m}]$,
    we have
    \begin{equation}\label{second moment}
        \Pb(D,E \in K_{j,n})\le C_3 2^{(-2n+m)(\xi_{\alpha}(1,1)+1)}.
    \end{equation}
    Furthermore, 
    \begin{equation}\label{eq:F}
        \Pb(D\in F_{j,n}) \le C_4 2^{-n(2+\xi_{\alpha}(1,1))}.
    \end{equation}
\end{proposition}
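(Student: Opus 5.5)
The plan is to decompose the loop $\gamma$ at the cube $D$ into the two strands $\gamma^1,\gamma^2$ crossing the annulus $\mathcal A_D$, together with the outer parts of the loop. We then compare the cut-box event with the generalized non-intersection event from Theorem~\ref{thm:up-to-constant} at $k=1$, $\lambda=1$, over the annulus $\mathcal A_D$. That annulus has log-modulus $\log(2^{n-j})\approx (n-j)\log 2$. Since $j$ is fixed, the answer should be $\Pb(H(D))\cdot 2^{-(n-j)\xi_\alpha(1,1)}$, up to constants depending on $j$. Combined with \eqref{hitting probability}, this gives \eqref{first moment}.

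\textbf{First moment.} Condition on $H(D)$ and use the path decomposition of the bubble measure $\mu^{{\rm bub},\mathcal B_r}_{ar}$ at $\tau(D)$ and at the last and first visits $s_1,t_1,s_2,t_2$. Given the endpoints on $\partial_{\rm in}$ and $\partial_{\rm out}$, the strands $\gamma^1$ and $\gamma^2$ are (conditionally) independent Brownian excursions across $\mathcal A_D$. Their endpoint laws have densities bounded above and below by constants relative to uniform measure on the spheres, by Harnack-type estimates, since $D$ lies in the bulk of $D_0\subset\mathcal B_r$ and $j$ is fixed. Time-reverse $\gamma^1$ so that both strands run from $\partial_{\rm in}$ to $\partial_{\rm out}$. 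The loops of $\Lc_{\mathcal A_D}$ form a BLS restricted to the annulus, which is what $\Lambda_r$ uses (clusters in $\Lc_r\setminus\Lc_0$). After rescaling, $\Pb(\widetilde\gamma^1\cap\widetilde\gamma^2=\emptyset\mid H(D))$ is therefore comparable to $p(\alpha,1,(n-j)\log 2,1)$. The one asymmetry is that $\Lambda_r$ enlarges only $B^1$; the event here requires the enlargements of both strands to be disjoint. This matches, because $\widetilde\gamma^1\cap\widetilde\gamma^2=\emptyset$ is the same as $\gamma^2$ avoiding $\gamma^1$ together with all clusters touching $\gamma^1$ (clusters are maximal). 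Excursion measures versus Brownian motions stopped at $T_r$ differ only by boundary effects, and these are handled by the separation lemma (Proposition~\ref{separation lemma}) plus Lemma~\ref{lem:glueing}, exactly as in the upper-bound gluing proof. Theorem~\ref{thm:up-to-constant} then yields both sides of \eqref{first moment}.

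\textbf{Second moment.} For $D,E$ at distance $\asymp 2^{-m}$, use three disjoint annuli: annuli of modulus $\approx(n-m)\log2$ around each of $D$ and $E$, at radii from $2^{-n}$ to $2^{-m-3}$, and an annulus around both, from $2^{-m+2}$ to $2^{-j}$. The cut-box events imply non-intersection events in each annulus. By restriction these are monotone: dropping clusters not contained in an annulus only shrinks $\Lambda$, and disjointness for the larger enlargement implies disjointness for the smaller one. Loops contained in disjoint annuli are independent, and the strong Markov property handles the Brownian pieces. The hitting costs are $\Pb(H(D)\cap H(E))\lesssim 2^{-m}2^{-(n-m)}$, and the non-intersection costs are $2^{-(n-m)\xi}$ twice and $2^{-(m-j)\xi}$ once. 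The product is $\lesssim 2^{(-2n+m)(1+\xi)}$, giving \eqref{second moment}. Some care is needed because the loop may visit $D$ and $E$ in either order and many times. I would sum over the order of visits and, for the excursion structure, use only upper bounds from Theorem~\ref{thm:up-to-constant}.

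\textbf{Estimate \eqref{eq:F}.} Four crossings of $\mathcal A_D$ cost an extra factor $2^{-n}$: after the excursion through $D$, the loop must return to $\widetilde D$ from $\partial_{\rm out}$, which has probability $\asymp 2^{-(n-j)}$ in three dimensions. The non-intersection cost of the two distinguished strands is still bounded by the generalized probability, by monotonicity. The main obstacle is the first-moment lower bound. There the comparison of the loop's strands, which have the endpoints and boundary behaviour of excursions, with the setup of $p$ (uniform starting points on $S_0$) needs the separation lemma to insert well-separated endpoints at both ends at constant cost. I would mimic the gluing argument in the proof of Theorem~\ref{thm:up-to-constant} for $k=1$, together with the cluster-small event $L$ from Lemma~\ref{lem:cluster_small 1}.
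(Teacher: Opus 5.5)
Your proposal is correct and follows essentially the same route as the paper. The first moment comes from comparing $(\gamma^1,\gamma^2)$ with independent excursions (the paper simply cites \cite[Lemma~5.4]{GLQ26}), combined with Theorem~\ref{thm:up-to-constant} and \eqref{hitting probability}. The second moment comes from three disjoint annuli with independent restricted soups; when $n-m$ is bounded your inner annuli degenerate, and the paper handles that case separately using only the outer annulus. Finally, \eqref{eq:F} comes from the extra return cost $2^{-(n-j)}$.

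One slip to fix: the joint hitting cost is $\Pb(H(D)\cap H(E))\lesssim 2^{-m}\cdot 2^{-2(n-m)}=2^{-2n+m}$, with one factor $2^{-(n-m)}$ for each cube. You wrote $2^{-m}2^{-(n-m)}=2^{-n}$, but it is the former that your final product $2^{(-2n+m)(1+\xi_\alpha(1,1))}$ actually requires.
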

 We first prove Proposition~\ref{dimension of a single Brownian loop} assuming Proposition~\ref{First and second moment}.
\begin{proof}[Proof of Proposition~\ref{dimension of a single Brownian loop},
assuming Proposition~\ref{First and second moment}]
Fix \(j\ge4\). We first record two elementary inclusions. Almost surely,
\begin{equation}\label{eq:Ej-Kjn}
    \mathcal E_j\cap D_0
    \subseteq
    \bigcap_{n\ge j+4}\bigcup_{D\in K_{j,n}}D ,
\end{equation}
and
\begin{equation}\label{eq:Kjn-Fjn}
    \left(
    \bigcap_{n\ge j+4}\bigcup_{D\in K_{j,n}}D
    \right)\setminus \mathcal E_j
    \subseteq
    \bigcap_{n\ge j+4}\bigcup_{D\in F_{j,n}}D .
\end{equation}

Indeed, let \(x\in \mathcal E_j\cap D_0\), and let \(D\) be the
\(n\)-cube containing \(x\).  Since \(D^j\subseteq \mathcal B_{2^{-j}}(x)\),
the two strands of \(\gamma\) entering and leaving \(\widetilde D\) lie in
the two different components of
\(\mathcal C^{\gamma}_{2^{-j}}(x)\setminus\{x\}\).  Therefore their loop-soup
enlargements are disjoint, and hence \(D\in K_{j,n}\). This proves
\eqref{eq:Ej-Kjn}.

For \eqref{eq:Kjn-Fjn}, suppose that \(x\) belongs to the left-hand side. Let \(D_n\) be an \(n\)-cube containing \(x\) with
\(D_n\in K_{j,n}\). If, along an infinite sequence of \(n\)'s, the loop crossed
\(\mathcal A_{D_n}\) fewer than four times, then the two distinguished
crossings would be the only two strands connecting \(\widetilde D_n\) to
\(\partial D_n^j\). Since \(D_n\in K_{j,n}\), these two strands, together with
the loop-soup clusters attached to them inside \(\mathcal A_{D_n}\), do not intersect each other. Letting \(n\to\infty\), and using
\(D_n^j\subseteq\mathcal B_{2^{-j}}(x)\), this would imply
\(x\in\mathcal E_j\), a contradiction. Hence, for all sufficiently large
\(n\), every such \(D_n\) belongs to \(F_{j,n}\).

The following arguments are standard, see e.g. \cite{Lawler96,GLQ26}, hence we will be brief.
By the standard first-moment argument, using 
\eqref{first moment} and \eqref{eq:Ej-Kjn}, we obtain that 
\[
    \dim_{\mathcal H}(\mathcal E_j\cap D_0)
    \le
    \max\{2-\xi_\alpha(1,1),0\}
    \quad and \quad \dim_{\mathcal H}
    \left(
    \bigcap_{n\ge j+4}\bigcup_{D\in F_{j,n}}D
    \right)
    \le
    \max\{1-\xi_\alpha(1,1),0\}
    \quad\text{a.s.}
\]
The second-moment estimate (a combination of \eqref{first moment} and \eqref{second moment}) gives that, with positive probability,
\[
    \dim_{\mathcal H}
    \left(
    \bigcap_{n\ge j+4}\bigcup_{D\in K_{j,n}}D
    \right)
    \ge
    \max\{2-\xi_\alpha(1,1),0\}.
\]

Since $\max\{1-\xi_\alpha(1,1),0\} < \max\{2-\xi_\alpha(1,1),0\}$,
\eqref{eq:Kjn-Fjn} implies that, on the
positive-probability event above,
\[
    \dim_{\mathcal H}(\mathcal E_j\cap D_0)
    \ge
    2-\xi_\alpha(1,1).
\]
If \(2-\xi_\alpha(1,1)\le0\), the desired lower bound is trivial under the
convention \(\dim_{\mathcal H}(\varnothing)=0\).  Hence, for some \(j\ge4\),
\[
    \dim_{\mathcal H}(G)
    \ge
    \max\{2-\xi_\alpha(1,1),0\}
\]
with positive probability. Since \(G=\bigcup_{j\ge4}\mathcal E_j\) up to the
choice of the local radius, the almost sure upper bound follows from the
countable union over \(j\). This proves the proposition.
\end{proof}

\subsection{First and second moment estimate}
 In this subsection, we prove Proposition~\ref{First and second moment}.
 Although $\gamma^1$ and $\gamma^2$ are not independent, they are comparable to
 independent excursions in the sense of the following lemma, which can be found
 in \cite[Lemma~5.4]{GLQ26}.
\begin{lemma}\label{Translate loop into excursions} For any given n-cube $D$, on the event $H(D)$, the following holds.

    Let $Y^1,Y^2$ be two independent Brownian excursions crossing the annulus $\mathcal{A}_D$. Then the joint law of $(\gamma^1,\gamma^2)$ has a density with respect to that of $(Y^1,Y^2)$ which is uniformly bounded from $0$ and $\infty$ (the bound only depends on $j$).
\end{lemma}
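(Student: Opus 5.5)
The statement to prove is Lemma~\ref{Translate loop into excursions}. The plan is to decompose the bubble loop $\gamma$ (conditioned on $H(D)$) at the four times $s_1,t_1,s_2,t_2$ using the path-measure decompositions of Section~2. The goal is to write the joint law of $(\gamma^1,X,\gamma^2,\text{rest})$ as an integral over the endpoints $\gamma(s_1),\gamma(t_1)\in\partial_{\rm out}\times\partial_{\rm in}$ and $\gamma(s_2),\gamma(t_2)\in\partial_{\rm in}\times\partial_{\rm out}$ of a product of measures. Concretely, the restriction of $\mu^{{\rm bub},\mathcal B_r}_{ar}$ to $H(D)$ should decompose as
\[
\int \mu^{\mathcal B_r}_{ar,y_1}\otimes \mu^{\mathcal A_D}_{y_1,x_1}\otimes \mu^{D^j}_{x_1,x_2}\otimes \mu^{\mathcal A_D}_{x_2,y_2}\otimes \mu^{\mathcal B_r}_{y_2,ar}\,dy_1\,dx_1\,dx_2\,dy_2 .
\]
Here the first and last factors carry the constraints that they do not return to $D^j$ (last exit) and that they avoid $\widetilde D$ where appropriate, and the middle factor hits $D$. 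This is the standard last-exit/first-entrance decomposition: a boundary-to-boundary measure in the annulus for the excursions, and interior/boundary Green-type measures for the remaining pieces. The crossings $\gamma^1,\gamma^2$ are exactly boundary-to-boundary excursions of $\mathcal A_D$, and given their endpoints they are conditionally independent of everything else.

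Next, one integrates out the middle and outer pieces. The resulting density of $(\gamma^1,\gamma^2)$ relative to $\mu^\#_{\mathcal A_D}\otimes\mu^\#_{\mathcal A_D}$ (the normalized excursion measure, i.e.\ the law of $(Y^1,Y^2)$) is a function $f(y_1,x_1,x_2,y_2)$ of the four endpoints only. It is a product of three factors: the total masses of the outer piece from $ar$ to $y_1$ and from $y_2$ back to $ar$, and of the middle piece from $x_1$ to $x_2$ in $D^j$ hitting $D$. After dividing by $\Pb(H(D))\asymp 2^{-n}$ from \eqref{hitting probability}, it remains to show $f$ is bounded above and below by constants depending only on $j$.

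For the outer factors, $y_i$ range over $\partial D^j$, a sphere of radius $\asymp 2^{-j}$ at distance $\asymp 1$ from $\partial\mathcal B_r$, since $v_D\in D_0$ and $r\ge 1/4$. By the Harnack inequality in $\mathcal B_r\setminus \overline{D^j}$ and the boundary Harnack principle near $ar$, these masses are comparable to constants depending on $j$. For the middle factor, $x_1,x_2\in\partial\widetilde D$, a sphere of radius $2^{-n}$. By scaling, the mass of paths from $x_1$ to $x_2$ in $D^j$ hitting $D$ equals a scale factor times the corresponding quantity at unit scale in a ball of radius $2^{n-j}\ge 16$. This is comparable to a constant uniformly in $n$, because the boundary Harnack comparison on $\partial\widetilde D$ for positions relative to $D\subset\widetilde D$ is scale invariant and the outer ball is large. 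The factor $2^{-n}$ produced by the scaling cancels against $\Pb(H(D))$.

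The main obstacle is this uniformity in $n$ of the middle-piece comparison, in particular near boundary points $x_i$ of $\widetilde D$ close to $D$ (corners of the cube touch the sphere $\partial\widetilde D$). I would handle it by shrinking: replace $\widetilde D$ by a slightly larger ball if needed. Alternatively, I would decompose at the first hitting of $\partial\mathcal B_{2^{-n}/2}(v_D)$ and use the uniform Harnack bound there, which is legitimate because the statement only needs comparability with constants depending on $j$, and the excursion normalization of $\mathcal A_D$ absorbs fixed-ratio changes. Since the paper cites \cite[Lemma~5.4]{GLQ26} for the two-dimensional version, I would follow that argument, checking that every step uses only Harnack and boundary Harnack estimates, which hold equally in three dimensions.
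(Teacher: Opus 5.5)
Your plan (decompose $\gamma$ at $s_1,t_1,s_2,t_2$, use that given the four endpoints the crossings $\gamma^1,\gamma^2$ are independent boundary-to-boundary excursions of $\mathcal A_D$, and bound the endpoint density by Harnack estimates) is essentially the argument of \cite[Lemma~5.4]{GLQ26} that the paper cites, and it works in $\Rb^3$. It suffices to show $\sup f\le C(j)\inf f$ for the endpoint density $f$, because $f$ integrates to $1$ against the law of $(Y^1,Y^2)$, so there is no need to match scaling factors against $\Pb(H(D))$.

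Two details should be corrected, though neither affects the conclusion. First, the pieces are less constrained than you state: $\gamma[0,s_1]$ need only avoid $D$, and $\gamma[t_2,t_\gamma]$ need only avoid $\overline{\widetilde D}$. Likewise $X=\gamma[t_1,s_2]$ stays in $D^j$ only until $\tau(D)$ and may leave $D^j$ before $s_2$. So its mass is $\int_{\partial D}h(x_1,w)\,G_{\mathcal B_r}(w,x_2)\,dw\asymp 2^{n}$ uniformly in $x_1,x_2\in\partial\widetilde D$, where $h(x_1,\cdot)$ is the density of the first hit of $D$ before leaving $D^j$. The two outer factors are positive harmonic in $y_1,y_2$ across $\partial D^j$, so interior Harnack suffices there. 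Second, your ``main obstacle'' does not arise: the circumradius $\frac{\sqrt3}{2}2^{-n}$ of $D$ is strictly less than $2^{-n}$, so $\operatorname{dist}(D,\partial\widetilde D)\ge(1-\frac{\sqrt3}{2})2^{-n}$ and uniformity in $n$ is immediate.
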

We are in a position to prove Proposition~\ref{First and second moment}.
\begin{proof}[Proof of  Proposition~\ref{First and second moment}]
    We prove \eqref{first moment} first. Let $\widetilde{\gamma}^1$ be the union of $\gamma^1$ together with clusters it intersects. By Theorem \ref{thm:up-to-constant} and Lemma \ref{Translate loop into excursions}, there exist constants $c_3,c_4>0$, depending only on $j$ and $\alpha$, such that
    \begin{equation}
        c_3 2^{-n \xi_{\alpha}(1,1)}
        \le
        \Pb(D \in K_{j,n} \mid H(D))
        \le
        c_4 2^{-n \xi_{\alpha}(1,1)}.
    \end{equation}
    Combining this with \eqref{hitting probability}, there exist constants $c_5,c_6>0$, depending only on $r$, $j$, and $\alpha$, such that
    \begin{equation}
        c_5 2^{-n(1+\xi_{\alpha}(1,1))}
        \le
        \Pb(D \in K_{j,n})
        \le
        c_6 2^{-n(1+\xi_{\alpha}(1,1))}.
    \end{equation}
    This proves \eqref{first moment}.
    For \eqref{eq:F}, if \(D\in F_{j,n}\), then besides the two distinguished crossings
used in the definition of \(K_{j,n}\), the loop makes at least one additional
crossing of \(\mathcal A_D\). Conditionally on \(H(D)\) and on the two
distinguished crossings, this costs an extra factor \(C_4'2^{-n}\) for some universal constant $C_4'$. Combining this with \eqref{first moment} gives
\[
    \Pb(D\in F_{j,n})\le C_4 2^{-n(2+\xi_\alpha(1,1))}.
\]

Finally, we prove \eqref{second moment}. Let \(D,E\) be two \(n\)-cubes with distance in
\([2^{-m-1},2^{-m}]\). We use
\(\Pb(H(D)\cap H(E))\le C2^{-2n+m}\), and work on the event
\(H(D)\cap H(E)\).

\begin{description}
    \item[Case 1.] \(n\ge m+5\).
\end{description}
Let \(v_D,v_E\) be the centers of \(D,E\), and let \(x\) be their midpoint. Set
\(\mathcal A_D^m:=\mathcal B_{2^{-m-3}}(v_D)\setminus\mathcal B_{2^{-n}}(v_D)\),
\(\mathcal A_E^m:=\mathcal B_{2^{-m-3}}(v_E)\setminus\mathcal B_{2^{-n}}(v_E)\), and
\(\mathcal A_{D,E}^m:=\mathcal B_{2^{-j-1}}(x)\setminus\mathcal B_{2^{-m+2}}(x)\).
These three annuli are disjoint. Let \(\gamma^1(D),\gamma^2(D)\) be the first
and last crossings of \(\mathcal A_D^m\), and define
\(\gamma^1(E),\gamma^2(E)\), \(\gamma^1(D,E),\gamma^2(D,E)\) similarly.

For \(V\subseteq\mathcal A\), write \(\Lambda_{\mathcal A}(V)\) for \(V\)
together with the clusters of the loop soup restricted to \(\mathcal A\) that
intersect \(V\). Let
\(G_D:=\{\Lambda_{\mathcal A_D^m}(\gamma^1(D))\cap\gamma^2(D)=\emptyset\}\),
and define \(G_E,G_{D,E}\) analogously. By the same path-decomposition argument
as in Lemma~\ref{Translate loop into excursions}, the six crossings can be
replaced by independent Brownian excursions up to multiplicative constants. Since the three
annuli are disjoint, the restricted loop soups are independent. Thus
\[
\Pb(D,E\in K_{j,n}\mid H(D)\cap H(E))
\le C\Pb(G_D)\Pb(G_E)\Pb(G_{D,E})
\le C2^{-2(n-m)\xi_\alpha(1,1)}2^{-m\xi_\alpha(1,1)}.
\]
Multiplying by \(\Pb(H(D)\cap H(E))\le C2^{-2n+m}\) gives
\[
\Pb(D,E\in K_{j,n})\le C2^{-2n+m}2^{-(2n-m)\xi_\alpha(1,1)}
= C2^{(-2n+m)(1+\xi_\alpha(1,1))}.
\]

\begin{description}
    \item[Case 2.] \(n\le m+4\).
\end{description}
Let \(\gamma^1(D,E),\gamma^2(D,E)\) be the first and last crossings of
\(\mathcal A_{D,E}^m:=\mathcal B_{2^{-j-1}}(x)\setminus\mathcal B_{2^{-m+2}}(x)\).
Replacing them by independent excursions \(Y^1,Y^2\) costs only a constant, and
the loop soup is restricted to \(\mathcal A_{D,E}^m\). Hence
\[
\Pb(D,E\in K_{j,n}\mid H(D)\cap H(E))
\le C\Pb(\Lambda_{\mathcal A_{D,E}^m}(Y^1)\cap Y^2=\emptyset)
\le C2^{-n\xi_\alpha(1,1)}.
\]
Since \(\Pb(H(D)\cap H(E))\le C2^{-n}\), we get
\(\Pb(D,E\in K_{j,n})\le C2^{-n(1+\xi_\alpha(1,1))}\), which is equivalent to
\eqref{second moment} because \(n-m\in\{0,1,2,3,4\}\).
\end{proof}

\subsection{Proof of Theorem \ref{dimension}}

We are now ready to prove Proposition~\ref{Estimate of dimension} using Proposition~\ref{dimension of a single Brownian loop}.
\begin{proof}[Proof of Proposition~\ref{Estimate of dimension}]
The proof uses the same Palm-transfer argument as in
\cite[Section~5.4]{GLQ26}. We only sketch it. After restricting to loops intersecting a
fixed ball and having diameter at least \(\epsilon\), the loop soup is finite,
and the Campbell--Palm formula shows that a chosen loop together with the
remaining soup is mutually absolutely continuous with an independent loop in an
independent soup. Using the decomposition of the three-dimensional loop measure
into bubble measures \cite[Proposition~4.2]{JL26}, Proposition~\ref{dimension of a single Brownian loop}
therefore transfers to the full loop soup. Taking the countable union over the
truncation parameters gives the almost sure upper bound, while the same
comparison gives the positive-probability lower bound.
\end{proof}

We next state the ergodicity and zero-one law for the three-dimensional
BLS. A discrete version can be found in \cite[Proposition~3.2]{CS16}; the
continuum version below follows by a standard argument, as in
\cite[Lemma~6.1]{JL26}. Let $\mathcal{G}$ be the sigma-field generated by
$\Lc^\alpha$. Let $\tau_x:\mathcal{G}\to\mathcal{G}$ be the shift operator
induced by the translation $y\mapsto x+y$ in $\mathbb{R}^3$.
\begin{lemma}[Ergodicity]\label{Ergodicity}
    For all $x\in \Rb^3 \setminus \{0\}$, the shift operator $\tau_x$ is
    ergodic with respect to the law of $\Lc^\alpha$. That is, the law of
    $\Lc^\alpha$ is preserved under $\tau_x$, and for every
    $\tau_x$-invariant event $A\in \mathcal{G}$, we have
    $\Pb(\Lc^\alpha \in A)\in\{0,1\}$.
\end{lemma}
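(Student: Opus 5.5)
The argument has two parts: translation invariance of the law, which is immediate from the construction, and mixing, which implies ergodicity. The mixing will come from the fact that the loop measure gives small mass to loops that are large in a suitable sense.

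\emph{Invariance.} The loop measure $\mu_{\text{loop}}$ is defined with Lebesgue measure $dx$ for the root and translation-invariant bridge laws $\Pb_{x,x,t}$. It is therefore invariant under $y\mapsto x+y$. A Poisson point process whose intensity measure is translation invariant has a translation-invariant law, so $\tau_x$ preserves the law of $\Lc^\alpha$.

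\emph{Mixing.} It suffices to show mixing along the sequence $\tau_{nx}$: for $A,B\in\mathcal G$,
\[
\Pb(A\cap\tau_{nx}^{-1}B)\to\Pb(A)\Pb(B)\quad\text{as } n\to\infty .
\]
Given this, an invariant event $A$ (that is, $\tau_x^{-1}A=A$) satisfies $\Pb(A)=\Pb(A)^2$, hence $\Pb(A)\in\{0,1\}$.

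To prove mixing, I first approximate. Events that are measurable with respect to the loops contained in a fixed ball $\mathcal B_R$ generate $\mathcal G$, because every loop is bounded. So, up to an error $\eta$ in probability, I may assume that $A$ and $B$ depend only on $\Lc^\alpha_{\mathcal B_R}$.

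Next I decouple. After the shift, $\tau_{nx}^{-1}B$ depends only on the loops contained in $\mathcal B_R(nx)$. For $n$ large, the balls $\mathcal B_R$ and $\mathcal B_R(nx)$ are disjoint. The collections of loops contained in disjoint sets are independent Poisson processes, since they are restrictions of one Poisson process to disjoint subsets of loop space. Hence
\[
\Pb(A\cap \tau_{nx}^{-1}B)=\Pb(A)\Pb(B)
\]
exactly for large $n$, and letting $\eta\to0$ gives mixing.

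The only step that needs care is the approximation. Every $\mathcal G$-event is approximated by events determined by $\Lc^\alpha_{\mathcal B_R}$, since $\Lc^\alpha$ is the increasing union of the $\Lc^\alpha_{\mathcal B_R}$ and $\sigma(\Lc^\alpha_{\mathcal B_R})\uparrow\mathcal G$. The martingale convergence theorem then gives $\mathbb E[\mathbf 1_A\mid\sigma(\Lc^\alpha_{\mathcal B_R})]\to\mathbf 1_A$ in $L^1$, which yields the approximating events. This is the same as the discrete argument of \cite[Proposition~3.2]{CS16} and \cite[Lemma~6.1]{JL26}. Since independence holds exactly for large $n$, no estimate on large loops is actually needed, and I expect no real obstacle.
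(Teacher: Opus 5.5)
Your proof is correct and is essentially the standard argument the paper invokes; the paper gives no details and only cites \cite[Proposition~3.2]{CS16} and \cite[Lemma~6.1]{JL26}. Invariance of the law follows from translation invariance of $\mu_{\text{loop}}$, and ergodicity follows from mixing along $\tau_{nx}$, obtained by approximating events by $\sigma(\Lc^\alpha_{\mathcal B_R})$-measurable ones and using exact independence of the soup restricted to loops in disjoint balls. The one step you leave implicit, that the approximation error for your second event carries over unchanged to $\tau_{nx}^{-1}B$, follows from the invariance you proved first.
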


We now prove Theorem~\ref{dimension}.
\begin{proof}[Proof of Theorem \ref{dimension}]
    By Proposition~\ref{Estimate of dimension},
    \[
        \dim_{\mathcal{H}}(G_{\rm loc})
        \le
        \max \{2-\xi_{\alpha}(1,1),0 \}
    \]
    almost surely. It remains to prove the corresponding lower bound.
    The event
    \[
        \left\{
        \dim_{\mathcal{H}}(G_{\rm loc})
        \ge
        \max \{2-\xi_{\alpha}(1,1),0 \}
        \right\}
    \]
    is translation invariant and occurs with positive probability. By
    Lemma~\ref{Ergodicity}, it in fact occurs with probability one. This proves the
    theorem.
\end{proof}
\section{Continuity of generalized intersection exponent}

In this section, we prove the continuity of the generalized intersection
exponent at intensity zero. The only input about small-intensity BLS clusters
is the following consequence of the comparison with Mandelbrot fractal
percolation.

\begin{proposition}\label{prop: all clusters are small}
There exists \(R>1\) such that
\[
    \lim_{\alpha\downarrow 0}
    \mathbb P\bigl(
    \partial \mathcal{B}_1\overset{\mathcal L^\alpha}{\longleftrightarrow} \partial \mathcal{B}_R
    \bigr)
    =0 .
\]
\end{proposition}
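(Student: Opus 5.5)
The plan is to compare the loop soup with Mandelbrot fractal percolation, as indicated in the section introduction. Work in the cube $Q=[-R,R]^3$ and its dyadic subcubes. Each loop $\ell$ of $\mathcal L^\alpha$ with $\ell\cap(\mathcal B_R\setminus\mathcal B_1)\neq\emptyset$ is assigned a scale $k\ge 0$ with $2^{-k-1}R<\operatorname{diam}(\ell)\le 2^{-k}R$. Such a loop is contained in the union of at most a bounded number $M$ (universal) of adjacent dyadic cubes of side $2^{-k}R$. Loops of diameter larger than $R$ that touch the annulus are handled separately: by scaling, the $\mu_{\rm loop}$-mass of loops of diameter $\ge R$ intersecting $\mathcal B_R$ is a universal constant $c_0$. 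Hence, with probability at least $1-c_0\alpha$, no such loop exists, and this error vanishes as $\alpha\downarrow0$.

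Next I would define the comparison process. Declare a dyadic cube $D$ of side $2^{-k}R$ \emph{closed} if some loop of scale $k$ lies in the union of $D$ and its neighbours. By scale invariance of $\mu_{\rm loop}$, the mass of loops of diameter in $(2^{-k-1}R,2^{-k}R]$ staying in a fixed block of $3^3$ cubes of side $2^{-k}R$ is a universal constant $c_1$. Therefore each cube is closed with probability at most $p(\alpha):=1-e^{-27c_1\alpha}\to0$. Loops of disjoint scales or in far-apart regions are independent by the Poisson property, so the closed cubes are stochastically dominated by a finite-range dependent version of Mandelbrot fractal percolation with retention parameter $p(\alpha)$. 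The complement is the set that survives. A crossing cluster of $\mathcal L^\alpha$ from $\partial\mathcal B_1$ to $\partial\mathcal B_R$ is a connected subset of the union of closed cubes (together with their neighbourhoods) that connects the two spheres. It therefore yields a connected crossing of the annulus in the "closed" fractal set.

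The key step, and the main obstacle, is to show that the union of closed cubes at all scales is unlikely to contain a connected crossing when $p$ is small. Note that loops are not in fixed cubes and a cluster can chain loops of very different scales, so the connectivity comparison is exactly where a genuinely multiscale argument is needed. I would try a Peierls/renormalization argument first. A crossing from radius $1$ to radius $R$ must pass through a chain of cubes. Consider the largest-scale closed cube it uses: the crossing restricted to any annulus between two scales must be carried either by a single closed cube of the coarser scale or by a sequence of at least two (roughly $\ge 2$ per halving) closed cubes of finer scales. This gives a branching recursion. Let $\theta_k$ be the probability that a crossing of a fixed annulus of scale $2^{-k}R$ occurs. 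Then $\theta_k\le C p+C\theta_{k+1}^2$ in the style of the standard Chayes–Chayes–Durrett analysis of fractal percolation. Since the total loop mass entering scale $k$ is uniform in $k$, this inequality is uniform, and a fixed-point argument gives $\sup_k\theta_k\le 2Cp\to0$ for $p$ small.

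Alternatively, I could directly invoke the known fact that the closed set of Mandelbrot percolation with small retention (equivalently, its dual at subcritical density) is totally disconnected/does not cross annuli with probability tending to one. Combining this with the large-loop error $c_0\alpha$ then yields the limit claimed in Proposition~\ref{prop: all clusters are small}, with a fixed universal $R$ (e.g. $R=2$) chosen large enough that the annulus contains several cubes of the top scale.
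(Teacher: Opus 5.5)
\textbf{Comparison with the paper.} Your main route differs from the paper's. The paper proves nothing by hand here. It quotes the comparison of \cite[Lemma~6.8]{JL26}, where cubes hit by loops of the matching scale are removed, so the \emph{retained} set is a Mandelbrot percolation with retention parameter tending to $1$ as $\alpha\downarrow0$. A sheet in the retained set separating the two spheres blocks every cluster. The paper then invokes \cite{CCD88,CCGS91} to upgrade the positive-probability statement of \cite{JL26}: such a sheet exists with probability tending to $1$. Your direct multiscale bound on the union of closed blocks needs neither the three-dimensional duality between path crossings and separating sheets nor that upgrade. It works in any dimension and is insensitive to the finite-range dependence of your closed cubes. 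So it is a reasonable, more self-contained alternative, but as written its last step does not close.

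\textbf{The gap.} The inequality $\theta\le Cp+C\theta^2$ is also satisfied by $\theta$ of order one. (By scale invariance, $\theta=\sup_k\theta_k$ is the same at every scale.) A ``fixed-point argument'' therefore cannot select the small root without an a priori bound; you must induct on the number of scales.
\begin{itemize}
\item Let $\theta^{(n)}$ be the crossing probability, with supremum over centres, when only closed blocks of scales in $[k,k+n]$ are used. Here $\theta_k$ must involve blocks of scale $\ge k$ only; coarser ones go into the $p$-term one level up.
\item Let the recursion jump by a fixed large number $m$ of dyadic scales. Then a crossing of an annulus of aspect ratio $L$ forces crossings of two disjoint, well-separated sub-annuli of the same aspect ratio, centred on a finite grid. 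A single dyadic step leaves no room for this.
\item For $n<m$, a union bound gives $\theta^{(n)}\le C_mp$. For $n\ge m$, either a block of scale in $[k,k+m)$ meeting the annulus is closed, or two independent finer crossings occur. Hence $\theta^{(n)}\le C_mp+C_m'(\theta^{(n-m)})^2$.
\item Induction then gives $\theta^{(n)}\le 2C_mp$ for all $n$, once $4C_mC_m'p\le1$.
\end{itemize}
Pass to $n\to\infty$ at the level of loops. A crossing cluster contains a \emph{finite} chain of loops and so uses finitely many scales. Hence $\{\partial\mathcal B_1\leftrightarrow\partial\mathcal B_R\}$ is the increasing union of the truncated events. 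For a union of infinitely many closed cubes this limit is not automatic, since a connected crossing need not use finitely many cubes.

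\textbf{The ``alternative''.} It has the roles reversed. The union of closed cubes has interior, so it is never totally disconnected. Total disconnectedness of a fractal-percolation retained set at small retention is also irrelevant, because your closed set is not a nested retained set. The correct statement is the paper's: with removal probability $p(\alpha)\to0$, i.e.\ retention $1-p(\alpha)\to1$, the \emph{retained} set contains a separating sheet with probability tending to $1$. Such fractal-percolation theorems assume independent removals. Before applying them as a black box, your finite-range dependent closed cubes would first have to be dominated by an independent fractal percolation.
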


\begin{proof}
We use the comparison between three-dimensional Brownian loop-soup clusters
and Mandelbrot fractal percolation from \cite[Lemma~6.8]{JL26}. In that
comparison, a loop-soup cluster crossing a fixed annulus is ruled out by the
existence of a separating sheet in an associated Mandelbrot percolation model,
whose retention parameter tends to \(1\) as \(\alpha\downarrow0\).

The fractal-percolation input used in \cite{JL26} is stated there in a
positive-probability form, but the original Mandelbrot percolation argument
gives the stronger limiting statement that the probability of such a
separating sheet tends to \(1\) as the retention parameter tends to \(1\);
see \cite{CCD88,CCGS91}. Combining this limiting form with the comparison of
\cite[Lemma~6.8]{JL26} gives the desired conclusion.
\end{proof}

We are now ready to prove Theorem~\ref{theorem: continuity of intersection exponent}.
\begin{proof}[Proof of Theorem~\ref{theorem: continuity of intersection exponent}]
    For simplicity, we prove the theorem only in the case $k=1$; the
    argument for general $k$ is the same. For $\beta\ge0$, write
    $\Lambda_{r,\beta}$ and $Z_{r,\beta}$ to display the dependence on the
    intensity. We have
    \[
    Z_{r,0}-Z_{r,\alpha}
    =
    \Pb\bigl(
    \overline{B}_r \cap \Lambda_{r,0} = \emptyset,\,
    \overline{B}_r \cap (\Lambda_{r,\alpha}\setminus \Lambda_{r,0}) \ne \emptyset
    \mid \mathcal{F}_r
    \bigr).
    \]
    Let $E_{\delta,\alpha}$ be the event that every cluster of $\Lc^\alpha$
intersecting $\mathcal{B}_{e^r}$ has diameter less than $\delta$.
    Using the same argument as in Lemma~\ref{lem:cluster_small 1}, we can cover
    $\mathcal{B}_{e^r}$ by finitely many annuli congruent to    $\mathcal{B}_{\delta}\setminus \mathcal{B}_{\delta/R}$ such that if
    $E_{\delta,\alpha}$ does not occur, then there exists a cluster crossing one of
    these annuli, where $R$ is given in Proposition~\ref{prop: all clusters are small}. By Proposition~\ref{prop: all clusters are small}, we have
    \[
    \lim_{\alpha\downarrow 0}\Pb(E_{\delta,\alpha})=1.
    \]
    Separating according to whether $E_{\delta,\alpha}$ occurs, we get
    \[
    Z_{r,0}-Z_{r,\alpha}
    \le
    \Pb\bigl(
    0<\mathrm{dist}(\overline{B}_r,\Lambda_{r,0})\le\delta,\,
    E_{\delta,\alpha}
    \mid \mathcal{F}_r
    \bigr)
    +\mathbf{1}_{E_{\delta,\alpha}^c}.
    \]
    Letting $\delta\to0$, the first term tends to $0$ almost surely. Since
    $0\le Z_{r,\alpha}\le 1$, we obtain by dominated convergence that
    \[
    \lim_{\alpha\downarrow0}\mathbb{E}[Z_{r,\alpha}^{\lambda}]
    =
    \mathbb{E}[Z_{r,0}^{\lambda}].
    \]

    Since $\xi_\alpha(1,\lambda)$ is monotone in $\alpha$, the limit
    $\lim_{\alpha\downarrow0}\xi_\alpha(1,\lambda)$ exists.
    Fix $\alpha_*\in I_0$ and $\lambda_0<\lambda<\lambda_1$. By
    Theorem~\ref{thm:up-to-constant}, for all $\alpha\in(0,\alpha_*]$,
    
    \[
    C_1 e^{-r\xi_\alpha(1,\lambda)}
    \le
    \mathbb{E}[Z_{r,\alpha}^{\lambda}]
    \le
    C_2 e^{-r\xi_\alpha(1,\lambda)},
    \]
    where $C_1,C_2>0$ depend only on $\alpha_*,\lambda_0,\lambda_1$.
    Letting $\alpha\downarrow0$, we get
    \[
    C_1 e^{-r\lim_{\alpha\downarrow0}\xi_\alpha(1,\lambda)}
    \le
    \mathbb{E}[Z_{r,0}^{\lambda}]
    \le
    C_2 e^{-r\lim_{\alpha\downarrow0}\xi_\alpha(1,\lambda)}.
    \]
    Letting $r\to \infty$ and combining this with the classical up-to-constants estimate for Brownian
    intersection exponents from \cite{Lawler96}, we conclude that
    $\lim_{\alpha\downarrow0}\xi_\alpha(1,\lambda)=\xi(1,\lambda)$.
    
   Finally, by the bound $
    1/2\le \xi(1,1)<1$ from
    \cite{BL90}, and continuity at $\alpha=0$, there exists $\alpha_1>0$ such that
    $\xi_\alpha(1,1)<1$ for all $\alpha\in(0,\alpha_1)$. Therefore,
    \[
    {\rm dim}_{\mathcal{H}}(G_{\rm loc})=2-\xi_\alpha(1,1)>1
    \]
    for all $\alpha\in(0,\alpha_1)$.
\end{proof}
We end with the proof of Corollary~\ref{coro:percolation dimension}.
\begin{proof}[Proof of Corollary~\ref{coro:percolation dimension}]
For \(s\in(0,1)\), call \(\widetilde B(s)\) a generalized cut point if
    \[
        \Lambda\bigl(\widetilde B[0,s)\bigr)\cap \widetilde B(s,1]=\emptyset .
    \]
    Let \(L\) be the set of generalized cut points. The first-moment and
    second-moment argument of \cite{Lawler96} applies verbatim, with
    Theorem~\ref{thm:up-to-constant} replacing the classical up-to-constants
    estimate for Brownian intersection exponents. Thus
    \[
        \dim_{\mathcal H}(L)
        =
        \max \{2-\xi_{\alpha}(1,1),0 \}
        \qquad \text{a.s.}
    \]
    We next observe that every admissible curve must contain these points.
    Indeed, fix \(s\) with \(\widetilde B(s)\in L\). The set
    \(\Lambda(\widetilde B[0,s))\) contains the starting point \(\widetilde B(0)\) and,
    by the defining property of \(L\), is disjoint from the future path
    \(\widetilde B(s,1]\). Hence any continuous curve in
    \(\Lambda(\widetilde B[0,1])\) from \(\widetilde B(0)\) to \(\widetilde B(1)\) must pass
    through \(\widetilde B(s)\); otherwise it would give a connection from
    \(\Lambda(\widetilde B[0,s))\) to \(\widetilde B(s,1]\), contradicting the
    definition of \(L\). Therefore \(L\subseteq\gamma[0,1]\) for every
    \(\gamma\in\Gamma\).

    Consequently, for every \(\gamma\in\Gamma\),
    \[
        \dim_{\mathcal H}(\gamma)
        \ge
        \dim_{\mathcal H}(L)
        =
        \max \{2-\xi_{\alpha}(1,1),0 \}.
    \]
    By Theorem~\ref{theorem: continuity of intersection exponent} and the
    strict bound \(\xi(1,1)<1\) from \cite{BL90}, after decreasing
    \(\alpha_1\) if necessary, we have
    \(\xi_\alpha(1,1)<1\) for all \(\alpha\in(0,\alpha_1)\). Hence
    \(\dim_{\mathcal H}(\gamma)>1\) for every \(\gamma\in\Gamma\), and taking
    the infimum over \(\Gamma\) gives \(\zeta>1\) almost surely.
\end{proof}

\bibliographystyle{abbrv}
\bibliography{references}

\end{document}